\theoremstyle{plain}
\newtheorem{thm}{\protect\theoremname}[section]
\theoremstyle{plain}
\theoremstyle{definition}
\newtheorem{defn}[thm]{\protect\definitionname}
\theoremstyle{plain}
\newtheorem{prop}[thm]{\protect\propositionname}
\theoremstyle{plain}
\newtheorem{lem}[thm]{\protect\lemmaname}
\theoremstyle{definition}
\theoremstyle{remark}
\theoremstyle{plain}
\algnewcommand\And{\textbf{and}}
\providecommand{\lemmaname}{Lemma}
\providecommand{\remarkname}{Remark}
\providecommand{\theoremname}{Theorem}
\providecommand{\factname}{Fact}
\providecommand{\corollaryname}{Corollary}
  \providecommand{\definitionname}{Definition}
  \providecommand{\examplename}{Example}
  \providecommand{\factname}{Fact}
  \providecommand{\propositionname}{Proposition}
\providecommand{\theoremname}{Theorem}
\begin{document}

\title{Finding Zeros of H\"{o}lder Metrically Subregular Mappings via Globally Convergent Levenberg\textendash Marquardt Methods 
}

\author{Masoud Ahookhosh\thanks{Luxembourg Center for Systems Biomedicine,
University of Luxembourg, Campus Belval, 4362 Esch-sur-Alzette, Luxembourg. Present address:  KU Leuven, Stadius Centre for Dynamical Systems, Signal Processing and Data Analytics, Kasteelpark Arenberg 10, 3001 Leuven.
E-mail: \protect \protect\protect\href{http://masoud.ahookhosh@kuleuven.be}{masoud.ahookhosh@kuleuven.be }}, Ronan M.T. Fleming\thanks{Luxembourg Center for Systems Biomedicine,
University of Luxembourg, Campus Belval, 4362 Esch-sur-Alzette, Luxembourg. Present address:  Leiden University, Leiden Academic Centre for Drug Research, Einsteinweg 55, Leiden, NL 2333 CC, Netherlands.
E-mail: \protect\protect\protect\href{mailto:ronan.mt.fleming@gmail.com}{ronan.mt.fleming@gmail.com}}, Phan T. Vuong\thanks{Luxembourg Center for Systems Biomedicine,
University of Luxembourg, Campus Belval, 4362 Esch-sur-Alzette, Luxembourg. Present address: Institute of Statistics and Mathematical Methods in Economics, Vienna University of Technology, Austria.
E-mail: \protect\protect\protect\href{mailto:vuong.phan@tuwien.ac.at}{vuong.phan@tuwien.ac.at}}\vspace{-2cm}
 }
\maketitle
\begin{abstract}
We present two globally convergent Levenberg\textendash Marquardt methods 
for finding zeros of H\"{o}lder metrically subregular mappings that may have non-isolated zeros. 
The first method unifies the Levenberg\textendash Marquardt 
direction and an Armijo-type line search, while the second incorporates this
direction with a nonmonotone trust-region technique. For both methods,
we prove the global convergence to a first-order stationary point of the
associated merit function. Furthermore, the worst-case global complexity 
of these methods are provided, indicating that an approximate stationary point 
can be computed in at most $\mathcal{O}(\varepsilon^{-2})$ 
function and gradient evaluations, for an accuracy parameter $\varepsilon>0$.
We also study the conditions for the proposed methods to converge to a 
zero of the associated mappings. Computing a moiety conserved steady state for biochemical reaction networks can be cast as 
the problem of finding a zero of a H\"{o}lder metrically subregular mapping. We report encouraging 
numerical results for finding a zero of such mappings derived from real-world biological data, which supports our theoretical foundations.
\end{abstract}
Keywords: Nonlinear equation, H\"{o}lder metric subregularity, Non-isolated solutions, Levenberg-Marquardt methods, Global convergence, 
Worst-case global complexity, Biochemical reaction network kinetics.

\selectlanguage{british}%
\vspace{2mm}

\noindent AMS subject classifications: 90C26, 68Q25, 65K05
\selectlanguage{english}%

\section{Introduction}

We consider the problem of finding zeros of the nonlinear mapping $h:\mathbb{R}^{m}\rightarrow\mathbb{R}^{n}$, i.e.,
\begin{equation}
h(x)=0,\quad x\in\mathbb{R}^{m},\label{eq:nonequa}
\end{equation}
where $h$ is continuously differentiable and satisfies the {\it H\"{o}lder metric subregularity} (see bellow). The set of zeros of such mappings is denoted by 
$\Omega$, which is assumed to be nonempty. 

A classical approach for finding a solution of~\eqref{eq:nonequa}
is to search for a minimiser of the nonlinear least-squares problem
\begin{equation}
\min_{x\in\mathbb{R}^{m}}\psi(x),\quad\text{with }\psi:\mathbb{R}^{m}\rightarrow\mathbb{\mathbb{R}}\text{ given by }\psi(x):=\frac{1}{2}\|h(x)\|^{2},\label{eq:meritfunc}
\end{equation}
where $\|\cdot\|$ denotes the Euclidean norm. In order to guarantee the quadratic or superlinear convergence of many Newton-type schemes for solving \eqref{eq:meritfunc}, 
the existence of some constant $\beta>0$ satisfying
\begin{equation}\label{eq:leb}
\beta\,\mathrm{dist}(x,\Omega)\leq\|h(x)\|,\quad\forall x\in\mathbb{B}(x^{*},\mathtt{r})
\end{equation}
is assumed, where $\mathbb{B}(x^{*},\mathtt{r})$ stands for the closed ball centered at $x^{*}$
with radius $\mathtt{r}>0$, cf. \cite{Izmailov2014,alefeld_rate_2001}. Such an inequality is referred as an {\it error bound} 
({\it Lipschitzian error bound} or {\it metric regularity}) condition. The notion of error bound has been very popular
during the last few decades to study the local convergence of optimisation methodologies; however, there are many important mappings where \eqref{eq:leb} is not satisfied , see, e.g., \cite{Ahookhosh_2017, Li2015}. This motivated the authors of \cite{Ahookhosh_2017} to propose a weaker condition so-called the {\it H\"{o}lder metric subregularity} ({\it H\"{o}lderian error bound}), i.e.,
\begin{equation}
\beta\,\mathrm{dist}(x,\Omega)\leq\|h(x)\|^{\delta},\quad\forall x\in\mathbb{B}(x^{*},\mathtt{r}),\label{eq:errorBound}
\end{equation}
for $\delta\in {]0,1]}$ and $\mathtt{r}\in {]0,1[}$. There are many mappings satisfying this condition,
see, e.g., \cite{Ahookhosh_2017, Li2015} and references therein. See also Section \ref{sec.numapp} for a real-world nonlinear system satisfying \eqref{eq:errorBound}, but not \eqref{eq:leb}.

The Levenberg\textendash Marquardt method is a standard technique used to solve~\eqref{eq:nonequa},
where, in the current point $x_k$ and for a positive parameter $\mu_{k}$, the convex subproblem
\begin{equation}\label{eq:def_phi_k}
\min_{d\in\mathbb{R}^{m}}\phi_{k}(d),\quad\text{with }\phi_{k}:\mathbb{R}^{m}\rightarrow\mathbb{\mathbb{R}}
\text{ given by }\phi_{k}(d):=\|\nabla h(x_{k})^{T}d+h(x_{k})\|^{2}+\mu_{k}\|d\|^{2},
\end{equation}
is solved to compute a direction $d_{k}$ in which $\nabla h(x_{k})\in \mathbb{R}^{m\times n}$ is the gradient of $h$ at $x_k$. This requires finding the
unique solution of the linear system
\begin{equation}
\left(\nabla h(x_{k})\nabla h(x_{k})^{T}+\mu_{k}I\right)d_{k}=-\nabla h(x_{k})h(x_{k}),\label{eq:LM_direction}
\end{equation}
where $I\in\mathbb{\mathbb{R}}^{m\times m}$ denotes the identity matrix. Then, the next iteration is be generated by 
$x_{k+1}=x_k+d_k$ and this scheme is continued until a stationary point of \eqref{eq:meritfunc} is found, which may correspond to a zero $h$, when certain conditions are satisfied. 

The choice of the parameter $\mu_{k}$ has substantial impacts on
the global convergence, the local convergence rate, and the computational efficiency of 
Levenberg\textendash Marquardt methods, cf. \cite{Ahookhosh_2017,Kelley1,more1978,wright1985,alefeld_rate_2001}. 
Hence, several ways to specify and to adapt this parameter have been proposed;
see, e.g., \cite{fan_quadratic_2005,fischer2002local,alefeld_rate_2001}. A recently proposed Levenberg\textendash Marquardt 
method by the authors \cite{Ahookhosh_2017} suggests an adaptive parameter $\mu_{k}$ 
based on the order $\delta\in\ ]0,1]$ of the H\"{o}lder metric subregularity~\eqref{eq:errorBound}, i.e.,
\begin{equation}
\mu_{k}=\xi_{k}\|h(x_{k})\|^{\eta}+\omega_{k}\|\nabla h(x_{k})h(x_{k})\|^{\eta},\label{eq:muk}
\end{equation}
where $\eta\in{]0,4\delta[}$, $\xi_{k}\in[\xi_{\min},\xi_{\max}]$ and $\omega_k\in[\omega_{\min},\omega_{\max}]$ 
with $\xi_{\min}+\omega_{\min}>0$. In \cite{Ahookhosh_2017}, this Levenberg\textendash Marquardt method, with \emph{adaptive regularisation} (LM-AR), was presented and its local convergence 
was studied for H\"{o}lder metrically subregular mappings. 

If one assumes that the starting point $x_{0}$ is close enough to a solution $x^{*}$ of 
(\ref{eq:meritfunc}), then the Levenberg\textendash Marquardt method is known to 
be quadratically convergent if $\nabla h(x^{*})$ is nonsingular, in which case it is clearly convergent to a solution 
to~\eqref{eq:nonequa}. In fact, the nonsingularity assumption implies that the solution of the minimisation 
problem~\eqref{eq:meritfunc} must be {\it locally unique}; 
see~\cite{bellavia_strong_2015,kanzow_levenbergmarquardt_2004,alefeld_rate_2001}.
However, assuming local uniqueness of the solution might be restrictive
for many applications since the underlying mappings might have {\it non-isolated zeros}. Therefore, much attention has been devoted to
the study of local convergence of the Levenberg\textendash Marquardt method under local error bounds, 
which enables the solution of mappings with non-isolated zeros; see, e.g., 
\cite{bellavia_strong_2015,fan_quadratic_2005,fischer2002local,alefeld_rate_2001}.
In particular, the local convergence of the Levenberg\textendash Marquardt method was
studied in \cite{Ahookhosh_2017} under the H\"{o}lder metric subregularity condition (\ref{eq:errorBound}).

As is the case in many applications, one cannot provide a sufficiently close starting point $x_0$
to a solution $x^{*}$, and therefore the convergence of the Levenberg\textendash Marquardt method is not 
guaranteed, which decreases the chance of practical applicability. 
To overcome this shortcoming, two {\it globalisation techniques} have been proposed to be combined with 
the Levenberg\textendash Marquardt direction, namely,
{\it line search} and {\it trust-region}; see, e.g.,~\cite{Ahookhosh_2015,Kelley1,Kelley2,ortega_iterative_2000}.
Generally, a line search method finds a descent direction $d_{k}$,
specifies a step-size $\alpha_{k}$, generates the new iteration $x_{k+1}=x_{k}+\alpha_{k}d_{k}$,
and repeats this scheme until a stopping criterion holds. The step-size is usually determined by 
an inexact line search such as Armijo, Wolfe, or Goldstein backtracking schemes; see 
\cite{conn2000,nocedal_numerical_2006-1}. In particular, the Armijo line search usually finds $\alpha_{k}$ 
using a backtracking procedure, which ends up with a step-size satisfying
\begin{equation}\label{eq:armijo}
\psi(x_{k}+\alpha_{k}d_{k})\leq\psi(x_{k})+\sigma\alpha_{k}\nabla\psi(x_{k})^{T}d_{k},
\end{equation}
where $\sigma\in{]0,1[}$. In order to provide an outline for trust-region methods, let us define, firstly, the quadratic function $q_k:\mathbb{R}^m\rightarrow \mathbb{R}$ with
\begin{equation}\label{eq:qk}
q_k(d):=\frac{1}{2}\|\nabla h(x_k)^T d+h(x_k)\|^2.
\end{equation}
Then, a Levenberg\textendash Marquardt trust-region method solves the quadratic subproblem (\ref{eq:def_phi_k}) 
to find a direction $d_k$, computes the ratio of the actual reduction to the predicted reduction 
\begin{equation}\label{eq:rk}
r_k:=\frac{\psi(x_k)-\psi(x_k+d_k)}{q_k(0)-q_k(d_k)},
\end{equation}
and updates the parameter $\mu_k$ using $r_k$. For line search and trust-region methods, 
the global convergence to a first-order stationary point of $\psi$ can be guaranteed,
which results to a monotone sequence of function values, i.e., $\psi(x_{k+1})\leq\psi(x_{k})$.

Regardless of the fact that the monotonicity seems natural for the minimisation goal, it has some
drawbacks. We address two of them here: (i) the monotone method
may lose its efficiency if iterations are trapped at the bottom of
a curved narrow valley, where the monotonicity
forces the iterations to trace the valley floor (causing very short
steps or even an undesired zigzagging); (ii) the Armijo-type line search
can break down for very small step-sizes because of rounding errors, when $\psi(x_{k}+\alpha_k d_{k})\simeq\psi(x_{k})$. 
In this case, the point~$x_{k}$ may still
be far from a stationary point of $\psi$; however, the Armijo rule
cannot be satisfied due to indistinguishability of $\psi(x_{k}+\alpha_k d_{k})$ from $\psi(x_{k})$ 
in floating-point arithmetic. To overcome such limitations, 
the seminal article by Grippo et al.~\cite{Grippo_1986} addressed a variant of the
Armijo rule~\eqref{eq:armijo} by substituting $\psi(x_{k})$ with $\psi_{l(k)}=\max_{0\leq j\leq m(k)}\psi(x_{k-j})$,
where $m(0)=0$ and $0\leq m(k)\leq\min\{m(k-1)+1,M\}$, for some nonnegative integer constant~$M$ for all $k\geq 1$.
This does not guarantee the monotonicity condition $\psi(x_{k+1})\leq\psi(x_{k})$ and therefore called {\it nonmonotone}.
Nonmonotonicity has also been studied for trust-region methods by replacing $\psi(x_k)$ with a nonmonotone term;
cf. \cite{Ahookhosh_Amini_2012}. On the basis of many studies in this area, nonmonotone methods have been recognised 
to be globally convergent and computationally efficient, even for highly nonlinear problems; 
see \cite{Ahookhosh_Amini_2012,Amini2014} and references therein.

\subsection{Motivation and contribution}
Our analysis was motivated by the problem of finding moiety conserved steady states of deterministic equations representing the dynamical evolution of molecular species abundance in {\it biochemical 
reaction networks}. This problem can be considered as an application of finding zeros of a mapping $h:\mathbb{R}^m\rightarrow\mathbb{R}^n$, that may not satisfy the local error 
bound~\eqref{eq:leb}. It was previously established~\cite{Ahookhosh_2017} that this 
mapping is H\"older metrically subregular and that the merit function is real analytic using
standard biochemical assumptions; cf.~\cite{artacho_accelerating_2015}. Applying a novel Levenberg\textendash Marquardt algorithm with adaptive regularisation (LM-AR), to this problem, we proved local convergence to a zero of $h$  for all such networks if the sufficiently closeness of a starting point to $x^*$ can be assumed \cite{Ahookhosh_2017}. However, 
providing a starting point close enough to $x^*$ remains as a limitation in practice, as
is the case for all local optimisation methods; see, e.g., \cite{conn2000,Dennis1996}. 

The {\it global convergence} and {\it complexity} of iterative methods has been the subject 
of intense debate within the nonlinear optimisation community over the last few decades. While the global convergence guarantees
the convergence of the iteration sequence generated by a method for any given
starting point $x_0$, the worst-case complexity provides an upper bound
on the number iterations or function evaluations needed to reach a stationary
point of the underlying objective function. These two factors are more 
important if the convexity or structured nonconvexity of the objective function is assumed; 
see, e.g., \cite{attouch2010proximal, bolte2014,NesB,NesG,nesterov2013,yudin1983}. 
In the particular case of solving nonlinear least-squares problems by Levenberg-Marquardt methods,
there are less results about their global convergence and complexity, compared with the volume of literature concerning Newton-type methods; cf. \cite{Ueda2010,Ueda2012}. This motivates our aim to study the global convergence properties and complexity of two Levenberg-Marquardt methods using line search and trust-region techniques.

We analyse the global convergence, and investigate the complexity of,
two Levenberg\textendash Marquardt methods using line search scheme and trust-region
globalisation techniques. For the first method, we use $\mu_k$ defined in (\ref{eq:muk}), solve  
the linear system (\ref{eq:LM_direction}) to specify $d_k$, and combine this direction
with a nonmonotone Armijo-type line search. We also propose a modified version of the Levenberg\textendash Marquardt 
parameter (\ref{eq:muk}), which is lower bounded, and combines the associated direction $d_k$
with a trust-region technique to adapt the Levenberg\textendash Marquardt 
parameter. A global convergence analysis is provided for both methods. Moreover, we demonstrate that, for both methods, a first-order 
stationary point is attained after at most $\mathcal{O}(\varepsilon^{-2})$ iterations or 
function evaluations. We also illustrate some special mappings $h$ where the proposed
methods are convergent to a solution to the nonlinear 
system (\ref{eq:nonequa}). Finally, we demonstrate that the application of these two methods mappings
 derived from real-world biochemical reaction networks, from a diverse set of biological species, shows encouraging 
numerical results in practice. To the best of our knowledge, these two Levenberg\textendash Marquardt methods are
the first methods, globally convergent to a stationary point, for finding zeros of the mapping $h$
arising in the study of biological networks. All algorithms are made
available within the COBRA~Toolbox~v.03~\cite{heirendt_creation_2018}, an open source software package for modelling biochemical reaction networks.

This paper has five sections, besides this
introductory section. Section 2 describes a globally convergent
Levenberg\textendash Marquardt line search method. Section 3 addresses
a globally convergent Levenberg\textendash Marquardt trust-region method,
where in both sections the global convergence and complexity of these methods are analysed.
In Section 4, finding a zero of some specific mappings with the proposed
methods is discussed. Section 5 reports encouraging numerical
results for a mapping appearing in biochemical reaction networks.
Finally, conclusions and area for further research are identified in Section 6.

\vspace{-1mm}
\section{Levenberg\textendash Marquardt line search method}
For the sake of simplicity, we define $H(x_{k}):=\nabla h(x_{k})\nabla h(x_{k})^{T}+\mu_{k}I$.
If $x_k$ is not a stationary point of $\psi$, from positive definiteness of $H(x_{k})$, we obtain
\begin{equation}\label{eq:descentDirection}
\nabla\psi(x_{k})^{T}d_{k}=-d_{k}^{T}H(x_{k})d_{k}<0,
\end{equation}
which guarantees the descent property of $d_{k}$ at $x_{k}$. This motivates us to develop a globally convergent
Levenberg\textendash Marquardt method using (\ref{eq:muk}). More precisely, we shall combine the 
Levenberg\textendash Marquardt direction with a nonmonotone Armijo-type line search using the nonmonotone term
\begin{equation}
D_{k}=\begin{cases}
\psi(x_{0}) & \mathrm{if}\:k=0,\\
(1-\theta_{k-1})\psi(x_{k})+\theta_{k-1}D_{k-1} & \mathrm{if}\:k\geq1,
\end{cases}\label{eq:dk}
\end{equation}
where $\theta_{k-1}\in[\theta_{\min},\theta_{\max}]$ and $0\leq\theta_{\min}\leq\theta_{\max}<1$, cf.~\cite{Ahookhosh_2012,Gu20082158}.

A combination of the direction $d_k$ (given by solving \eqref{eq:LM_direction} using the parameter (\ref{eq:muk}))
with a nonmonotone Armijo-type line search using (\ref{eq:dk}) leads to Algorithm \ref{a.line}.
 
\vspace{4mm}
\begin{algorithm}[H]
\DontPrintSemicolon \KwIn{ $x_{0}\in\mathbb{R}^{m}$, $\eta>0$,
$\overline{\alpha}>0$, $\varepsilon>0$, $\rho,\sigma\in{]0,1[}$,
$\xi_0\in[\xi_{min},\xi_{max}]$, $\omega_0 \in [0,\omega_{max}]$, $\theta_0 \in [\theta_{min},\theta_{max}]$;} 
\Begin{ $k:=0$;~$\mu_{0}:=\xi_{0}\|h(x_{0})\|^{\eta}+\omega_{0}\|\nabla h(x_{0})h(x_{0})\|^{\eta};$\;
\While {$\|h(x_{k})\|>\varepsilon$ or $\|\nabla\psi(x_{k})\|>\varepsilon$}{
solve the linear system~\eqref{eq:LM_direction} to specify the direction
$d_{k}$;~ $\ell=0$;~$\alpha_{k}=\overline{\alpha}$;\;
\While {$\psi({x}_{k}+{\alpha_k}d_{k})>D_{k}+\sigma\alpha_{k}\nabla\psi(x_{k})^{T}d_{k}$}{
$\ell=\ell+1$;~ $\alpha_{k}=\rho^{\ell}\overline{\alpha}$;\;
} $\ell_{k}=\ell$;~$x_{k+1}=x_{k}+\alpha_k d_{k}$;~ update $\xi_{k}$, $\omega_k$, and $\theta_k$;~\;
update $\mu_{k}$ and $D_{k}$ by~\eqref{eq:muk} and~\eqref{eq:dk},
respectively;\; } } \caption{ \textbf{LMLS} (Levenberg--Marquardt Line Search algorithm)}
\label{a.line}
\end{algorithm}

\medskip
 In order to prove the global convergence of the sequence $\{x_{k}\}$
generated by LMLS to a stationary point of $\psi$, we assume that the next assumptions hold:
\begin{description}
\item [{(A1)}] The mapping $h$ is continuously differentiable and H\"{o}lder
metrically subregular of order $\delta\in{]0,1]}$ at $(x^{*},0)$;
i.e., there exist some constants $\beta>0$ and $\mathtt{r}>0$ such that~\eqref{eq:errorBound} holds;
\item [{(A2)}] The lower level set $\mathcal{L}(x_{0}):=\left\{ x\in\mathbb{R}^{m}\:|\:\psi(x)\leq\psi(x_{0})\right\} $
is bounded;
\item [{(A3)}] $\nabla h$ is Lipschitz continuous, i.e.,
\[
\|\nabla h(x)-\nabla h(y)\|\leq L\|x-y\|,\quad\forall x,y\in \mathbb{R}^m.
\]
\end{description}

In the subsequent proposition, we first 
derive a lower bound for the step-size $\alpha_{k}$ 
and give a bound on the total number of function evaluations needed
until the line search (Line 5 of LMLS) is satisfied.

\begin{prop} \label{eq:boundLk}
Let $\left\{ x_{k}\right\} $ be an infinite
sequence generated by LMLS. Then,
\begin{description}
\item [{(i)}] $x_{k}\in\mathcal{L}(x_{0})$;
\item [{(ii)}] if LMLS does not terminate at $x_k$, then
\begin{equation} \label{eq:boundalphak}
\alpha_{k}\geq\frac{\rho(1-\sigma)\mu^{2}}{\vartheta(L_{0}^{2}+\mu)}:=\widehat{\alpha},
\end{equation}
with
\[
\vartheta:=\frac{1}{2}L_0^2+\frac{1}{2}\overline{\alpha}^2 \rho^{-2} 
L^2 \mu^{-2}L_{0}^2\|h(x_{0})\|^2 + (1+\overline{\alpha}\rho^{-1} L_{0}^{2}\mu^{-1})L \|h(x_{0})\|,
\]
for $\mu\in{]0,\varepsilon^{\eta}[}$. Moreover, the inner loop of LMLS is
terminated in a finite number of steps, denoted by $\ell_{k}$, which
satisfies
\begin{equation}
0\leq \ell_k\leq\frac{\log(\widehat{\alpha})-\log(\overline{\alpha})}{\log(\rho)}.\label{eq:boundpk}
\end{equation}
\end{description}
\end{prop}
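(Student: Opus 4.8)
The plan is to establish part (i) first, since it is the easy half and is needed for the rest. Because the line search in LMLS accepts $\alpha_k$ only when $\psi(x_k+\alpha_k d_k)\le D_k+\sigma\alpha_k\nabla\psi(x_k)^Td_k$, and since $\nabla\psi(x_k)^Td_k<0$ by \eqref{eq:descentDirection}, we get $\psi(x_{k+1})\le D_k$. An induction on $k$ using the convex-combination definition \eqref{eq:dk} of $D_k$ (a weighted average of $\psi(x_k)$ and $D_{k-1}$, with weights in $[0,1]$) then shows $D_k\le\psi(x_0)$ for all $k$, hence $\psi(x_{k+1})\le\psi(x_0)$; combined with $\psi(x_0)\le\psi(x_0)$ trivially this gives $x_k\in\mathcal L(x_0)$ for every $k$. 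I would write this induction out carefully: the base case is $D_0=\psi(x_0)$, and the step uses $D_k=(1-\theta_{k-1})\psi(x_k)+\theta_{k-1}D_{k-1}\le(1-\theta_{k-1})\psi(x_0)+\theta_{k-1}\psi(x_0)=\psi(x_0)$ once both $\psi(x_k)\le\psi(x_0)$ and $D_{k-1}\le\psi(x_0)$ are known.

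For part (ii), the standard Armijo lower-bound argument applies: if a trial step $\alpha=\rho^{\ell-1}\overline\alpha$ was rejected, then $\psi(x_k+\alpha d_k)>D_k+\sigma\alpha\nabla\psi(x_k)^Td_k\ge\psi(x_k)+\sigma\alpha\nabla\psi(x_k)^Td_k$ (using $D_k\ge\psi(x_k)$, which also follows from \eqref{eq:dk} by the same kind of induction since $D_{k-1}\ge\psi(x_k)$ need not hold — actually one uses $\psi(x_{k+1})\le D_{k}$ and $D_k\ge\psi(x_k)$ more carefully; I would verify $D_k\ge\psi(x_k)$ separately, noting it holds iff $D_{k-1}\ge\psi(x_k)$, which is where boundedness of $\mathcal L(x_0)$ enters). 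Then a Taylor expansion of $\psi$ along $d_k$ with the mean value theorem, together with the Lipschitz continuity of $\nabla h$ (A3) — which makes $\nabla\psi$ Lipschitz on the bounded set $\mathcal L(x_0)$ with some modulus controlled by $L$, $L_0:=\sup_{\mathcal L(x_0)}\|\nabla h\|$, and $\|h(x_0)\|$ — gives $\psi(x_k+\alpha d_k)\le\psi(x_k)+\alpha\nabla\psi(x_k)^Td_k+\tfrac{1}{2}\vartheta\alpha^2\|d_k\|^2$ for a constant $\vartheta$ of the stated form. Rearranging the two displayed inequalities yields $\alpha(1-\sigma)(-\nabla\psi(x_k)^Td_k)\le\tfrac{1}{2}\vartheta\alpha^2\|d_k\|^2$, i.e. $\alpha\ge\frac{2(1-\sigma)(-\nabla\psi(x_k)^Td_k)}{\vartheta\|d_k\|^2}$ for the last rejected step, so $\alpha_k\ge\rho$ times this.

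The remaining work is to turn the ratio $(-\nabla\psi(x_k)^Td_k)/\|d_k\|^2$ into the explicit constant $\widehat\alpha$. Using $-\nabla\psi(x_k)^Td_k=d_k^TH(x_k)d_k\ge\mu_k\|d_k\|^2$ from \eqref{eq:descentDirection} and positive definiteness, the ratio is at least $\mu_k$; and from the stopping condition being violated (so $\mu=\mu_k$ lies in the range $]0,\varepsilon^\eta[$ is NOT what happens — rather I expect one shows $\mu_k\ge$ some lower bound, or one simply carries $\mu_k$ and then notes the eigenvalues of $H(x_k)$ lie in $[\mu_k,L_0^2+\mu_k]$) one gets $\|d_k\|\le\|\nabla h(x_k)h(x_k)\|/\mu_k$, hence a quantitative handle. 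The bookkeeping to match the exact denominator $\vartheta(L_0^2+\mu)$ and numerator $\rho(1-\sigma)\mu^2$ is the fiddly part: one must bound $\|d_k\|$ above (via $\|d_k\|\le L_0\|h(x_0)\|/\mu$) and $-\nabla\psi(x_k)^Td_k$ below (via $\mu\|d_k\|^2$ or via $\|\nabla\psi(x_k)\|^2/(L_0^2+\mu)$), and track how $L$, $L_0$, $\overline\alpha$, $\rho$ enter $\vartheta$ through the second-order Taylor remainder of $\psi=\tfrac12\|h\|^2$, whose Hessian involves both $\nabla h\nabla h^T$ (bounded by $L_0^2$) and a term with $\nabla^2 h$ contracted against $h$ (bounded using $L$ and $\|h(x_0)\|$). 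Finally \eqref{eq:boundpk} is immediate: $\alpha_k=\rho^{\ell_k}\overline\alpha\ge\widehat\alpha$ forces $\rho^{\ell_k}\ge\widehat\alpha/\overline\alpha$, and taking logarithms (with $\log\rho<0$ flipping the inequality) gives $\ell_k\le(\log\widehat\alpha-\log\overline\alpha)/\log\rho$, while $\ell_k\ge0$ by construction. I expect the main obstacle to be precisely this constant-chasing in part (ii) — establishing the Lipschitz modulus of $\nabla\psi$ on $\mathcal L(x_0)$ in closed form and threading it through to recover the stated $\vartheta$ — rather than any conceptual difficulty; the structure of the argument is the classical Armijo backtracking bound.
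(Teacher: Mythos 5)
Your overall strategy is the paper's own: part (i) by induction from the accepted Armijo condition and the convex-combination structure of $D_k$ in \eqref{eq:dk}, and part (ii) by the classical backtracking argument (a quadratic descent-type estimate at the last rejected trial step $\alpha_k/\rho$ combined with the failed Armijo inequality), finishing with the logarithmic bound on $\ell_k$. The skeleton is right, and your alternative lower bound via $-\nabla\psi(x_k)^Td_k=d_k^TH(x_k)d_k\geq\mu_k\|d_k\|^2$ is legitimate and in fact implies the stated $\widehat{\alpha}$, since $\mu^2/(L_0^2+\mu)\leq\mu$; the paper instead uses $\|d_k\|\leq\mu_k^{-1}\|\nabla\psi(x_k)\|$ and $-\nabla\psi(x_k)^Td_k\geq(L_0^2+\mu_k)^{-1}\|\nabla\psi(x_k)\|^2$, which is the second route you list. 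However, two steps as you wrote them need repair. First, your justification of $D_{k-1}\geq\psi(x_k)$ ("where boundedness of $\mathcal{L}(x_0)$ enters") is misattributed: it comes from the Armijo condition accepted at iteration $k-1$ together with $\nabla\psi(x_{k-1})^Td_{k-1}<0$ (the inequality $\psi(x_k)\leq D_{k-1}$ you already used in part (i)), after which $D_k-\psi(x_k)=\theta_{k-1}(D_{k-1}-\psi(x_k))\geq0$; boundedness of the level set is needed only to get the bound $\|\nabla h(x_k)\|\leq L_0$.

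Second, and more substantively, the quadratic upper estimate for $\psi(x_k+\alpha d_k)$ cannot be deduced from "$\nabla\psi$ is Lipschitz on $\mathcal{L}(x_0)$": the rejected trial points $x_k+\alpha d_k$ with $\alpha\leq\overline{\alpha}/\rho$, and the segments joining them to $x_k$, need not lie in $\mathcal{L}(x_0)$; and your fallback remark involving $\nabla^2 h$ is unavailable, since under (A1)--(A3) the mapping $h$ is only $C^1$ with Lipschitz gradient. The paper avoids both problems by never differentiating $\psi$ twice: it Taylor-expands $h$ itself in integral form, expands the squared norm, and controls every term along the trial segment using the \emph{global} Lipschitz property (A3) together with $\|d_k\|\leq\mu^{-1}L_0\|h(x_0)\|$ (which uses $\mu_k>\mu$, obtained from the non-termination test, exactly the lower bound you anticipated) and $\alpha\leq\overline{\alpha}/\rho$; this is precisely how the $\overline{\alpha}\rho^{-1}$ and $\overline{\alpha}^2\rho^{-2}$ terms enter $\vartheta$. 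So the "constant-chasing" you defer is not bookkeeping on $\mathcal{L}(x_0)$ alone: it requires either working with $h$ directly as the paper does, or first establishing a Lipschitz-type bound for $\nabla\psi$ on an enlargement of $\mathcal{L}(x_0)$ of radius $\overline{\alpha}\rho^{-1}\mu^{-1}L_0\|h(x_0)\|$. With these two repairs your argument coincides with the paper's proof, including the final step $\widehat{\alpha}\leq\rho^{\ell_k}\overline{\alpha}\leq\overline{\alpha}$, which gives \eqref{eq:boundpk}.
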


\begin{proof}
We prove Assertions (i) and (ii) by induction at the same time. Let us assume $i=1$. Since $D_0=\psi(x_0)$, by the traditional results about
the monotone Armijo line search, we have $\psi(x_1) \leq D_0=\psi(x_0)$. This implies that $x_1\in \mathcal{L}(x_0)$. The proof of 
Assertion (ii) is similar to $i=k$, i.e., we therefore omit it.   

We now assume Assertions (i) and (ii) hold for $i=1,\ldots,k-1$ and prove them for $i=k$. Since $x_{k-1}$ satisfies the line search and $\nabla \psi(x_{k-1})^T d_{k-1}<0$, similar to Lemma 2.3 in \cite{Ahookhosh_2012}, we can show $\psi(x_{k})\leq D_{k-1}$. This and
\begin{equation}\label{eq:Dk_equality}
D_{k}-\psi(x_{k})=\theta_{k-1}(D_{k-1}-\psi(x_{k}))\geq 0,\quad D_{k}-D_{k-1}=(1-\theta_{k-1})(\psi(x_{k})-D_{k-1})\leq 0.
\end{equation}
imply $D_k \leq D_{k-1}$ and
\begin{equation}\label{eq:Dk_psik}
\psi(x_k) \leq D_k. 
\end{equation}
Therefore,
\begin{equation}\label{eq:DkPsi0}
\psi(x_{k})\leq D_{k-1}\leq D_{k-2}\leq\ldots\leq D_{0}=\psi(x_{0}),
\end{equation}
leading to $x_k\in \mathcal{L}(x_0)$, i.e., Assertion (i) holds for $i=k$. 

From (A1) and $x_k\in \mathcal{L}(x_0)$, there exists some constant $L_{0}>0$ such that
\begin{equation}\label{eq:boundj}
\|\nabla h(x_k)\|\leq L_{0},
\end{equation}
which implies
\begin{align*}
\|H(x_{k})\| =\left\|\nabla h(x_k)\nabla h(x_k)^T+\mu_{k}I\right\|\leq
\|\nabla h(x_k)\|^2+\mu_k\leq L_{0}^{2}+\mu_{k},
\end{align*}
leading to
\begin{equation}\label{eq:bound_lambda_min}
\lambda_{\min}\left(H(x_{k})^{-1}\right)=\frac{1}{\lambda_{\max}\left(H(x_{k})\right)}=\frac{1}{\|H(x_{k})\|}\geq\frac{1}{L_{0}^{2}+\mu_{k}}.
\end{equation}
From the definition of $d_k$, we obtain
\begin{equation}\label{eq:dkInequality}
\|d_{k}\|=\|H(x_{k})^{-1}\nabla\psi(x_{k})\|\leq\|H(x_{k})^{-1}\|\|\nabla\psi(x_{k})\|=\frac{1}{\lambda_{\min}(H(x_{k}))}\|\nabla\psi(x_{k})\|\leq\frac{1}{\mu_{k}}\|\nabla\psi(x_{k})\|.
\end{equation}
Since LMLS does not stop at $x_{k}$, it holds $\|h(x_{k})\|>\varepsilon$
and $\|\nabla h(x_{k})h(x_{k})\|>\varepsilon$, which imply
\[
\mu_k=\xi_{k}\|h(x_{k})\|^{\eta}+\omega_k\|\nabla h(x_{k})h(x_{k})\|^{\eta}>\xi_{\min}\varepsilon^{\eta},~~~\forall k\geq 0.
\]
Let us consider a constant $\mu\in{]0,\xi_{\min} \varepsilon^{\eta}[}$, i.e.,
\begin{equation}\label{eq:muklowerbound}
\mu_{k}>\mu,~~~\forall k\geq 0.
\end{equation}
We first derive a lower bound on the step-size $\alpha_{k}$. By \eqref{eq:dkInequality}, \eqref{eq:boundj}~ and \eqref{eq:muklowerbound},
we get
\begin{equation}
\|d_{k}\|\leq\mu_{k}^{-1}\|\nabla\psi(x_{k})\|\leq\mu_{k}^{-1}\|\nabla h(x_{k})\|\|h(x_{k})\|\leq\mu_{k}^{-1}L_{0}\|h(x_{0})\|\leq\mu^{-1}L_{0}\|h(x_{0})\|.\label{eq:dkbound}
\end{equation}
Therefore, for all $\alpha >0 $, we have
\begin{equation}
\|h(x_{k})+\alpha\nabla h(x_{k})^{T}d_{k}\|\leq\|h(x_{k})\|+\alpha\|\nabla h(x_{k})\|\|d_{k}\|\leq(1+\alpha L_{0}^{2}\mu^{-1})\|h(x_{0})\|.\label{eq:inequalityh}
\end{equation}
Further, for all $t\in [0,1]$ and $\alpha >0 $, (A3) and~\eqref{eq:dkbound} yield
\begin{equation}
\|\nabla h(x_{k}+t\alpha d_{k})-\nabla h(x_{k})\|\leq L\alpha\|d_{k}\|\leq \alpha L L_{0} \mu^{-1}\|h(x_{0})\|.\label{eq:inequalityn}
\end{equation}
It follows from \eqref{eq:boundj} that
\begin{align}\label{first_Part}
\frac{1}{2}\|h(x_{k})+\alpha\nabla h(x_{k})^{T}d_{k}\|^{2} \nonumber
&=\frac{1}{2}\|h(x_{k})\|^{2}+\alpha h(x_{k})^{T}\nabla h(x_{k})^{T}d_{k}+\frac{1}{2}\alpha^{2}\|\nabla h(x_{k})^{T}d_{k}\|^{2}\\\nonumber
&\leq \frac{1}{2}\|h(x_{k})\|^{2}+\alpha h(x_{k})^{T}\nabla h(x_{k})^{T}d_{k}+\frac{1}{2}\alpha^{2}\|\nabla h(x_{k})\|^2 \|d_{k}\|^{2}\\\nonumber
&\leq \frac{1}{2}\|h(x_{k})\|^{2}+\alpha h(x_{k})^{T}\nabla h(x_{k})^{T}d_{k}+\frac{1}{2}\alpha^{2}L_0^2 \|d_{k}\|^{2}\\
&=\psi(x_{k})+\alpha\nabla\psi(x_{k})^{T}d_{k}+\frac{1}{2}\alpha^{2}L_0^2 \|d_{k}\|^{2}.
\end{align}
By this inequality, the Taylor expansion of $h(x_{k}+\alpha d_{k})$ around $x_{k}$,
and the Cauchy\textendash Schwarz inequality, for any $\alpha>0$, we come to
\begin{equation}\label{eq:Taylor}
\begin{split}
\psi(x_{k}+\alpha d_{k}) 
& =\frac{1}{2}\left\Vert h(x_{k})+\alpha\nabla h(x_{k})^{T}d_{k}+\int_{0}^{1}\alpha(\nabla h(x_{k}+t\alpha d_{k})-\nabla h(x_{k}))^{T}d_{k}~dt\right\Vert ^{2}\\ 
 & =\frac{1}{2}\|h(x_{k})+\alpha\nabla h(x_{k})^{T}d_{k}\|^{2}+\frac{1}{2}\left\Vert \int_{0}^{1}\alpha(\nabla h(x_{k}+t\alpha d_{k})-\nabla h(x_{k}))^{T}d_{k}~dt\right\Vert ^{2}\\
 &\hspace{0.4cm} +(h(x_{k})+\alpha\nabla h(x_{k})^{T}d_{k})^{T}\int_{0}^{1}\alpha(\nabla h(x_{k}+t\alpha d_{k})-\nabla h(x_{k}))^{T}d_{k}~dt\\
&\leq \psi(x_{k})+\alpha\nabla\psi(x_{k})^{T}d_{k}+\frac{1}{2}\alpha^{2}L_0^2 \|d_{k}\|^{2}\\
&\hspace{0.4cm}+ \frac{1}{2}\left(\int_{0}^{1}\alpha(\|\nabla h(x_{k}+t\alpha d_{k})-\nabla h(x_{k})\|~\|d_{k}\|dt\right)^{2}\\
&\hspace{0.4cm}+\|h(x_{k})+\alpha\nabla h(x_{k})^{T}d_{k}\|\int_{0}^{1}\alpha \|\nabla h(x_{k}+t\alpha d_{k})-\nabla h(x_{k})\|~\|d_{k}\|dt.
\end{split}
\end{equation}
This inequality, \eqref{eq:boundj}, \eqref{eq:inequalityh}, \eqref{eq:inequalityn}, and (A3) suggest
\[
\begin{split}
\psi(x_{k}+\alpha d_{k}) \nonumber
&\leq  \psi(x_{k})+\alpha\nabla\psi(x_{k})^{T}d_{k}\\
&\hspace{4mm}+\left(\frac{1}{2}L_0^2+\frac{1}{2}\alpha^2 L^2 \mu^{-2}L_{0}^2\|h(x_{0})\|^2 + (1+\alpha L_{0}^{2}\mu^{-1})L\|h(x_{0})\| \right)\alpha^{2}\|d_{k}\|^{2}.
\end{split}
\]
From \eqref{eq:Dk_psik}, we come to
\begin{equation}\label{psi_estimate}
\begin{split}
\psi(x_{k}+\alpha d_{k}) \nonumber
&\leq D_k+\alpha\nabla\psi(x_{k})^{T}d_{k}\\
&\hspace{4mm}+\left(\frac{1}{2}L_0^2+\frac{1}{2}\alpha^2 L^2 \mu^{-2}L_{0}^2\|h(x_{0})\|^2 + (1+\alpha L_{0}^{2}\mu^{-1})L\|h(x_{0})\| \right)\alpha^{2}\|d_{k}\|^{2}.
\end{split}
\end{equation}
For $\alpha=\alpha_{k}/\rho\leq \overline{\alpha}/\rho$, we have
\begin{align*}
\frac{1}{2}L_0^2+\frac{1}{2}\alpha^2 L^2 \mu^{-2}L_{0}^2\|h(x_{0})\|^2 + (1+\alpha L_{0}^{2}\mu^{-1})L\|h(x_{0})\| & \leq \,  \frac{1}{2}L_0^2+\frac{1}{2}\overline{\alpha}^2 \rho^{-2} L^2 \mu^{-2}L_{0}^2\|h(x_{0})\|^2 \\
&\hspace{4mm}+ (1+\overline{\alpha}\rho^{-1} L_{0}^{2}\mu^{-1})L\|h(x_{0})\|=:\vartheta,
\end{align*}
which yields
\begin{equation}\label{eq:modArmijo}
\psi(x_{k}+\alpha d_{k})\leq D_{k}+\alpha\nabla\psi(x_{k})^{T}d_{k}+\vartheta \alpha^{2}\|d_{k}\|^{2}.
\end{equation}
For $\alpha=\alpha_{k}/\rho$, the Armijo-type
line search (Line 5 of LMLS) does not hold, i.e.,
\[
\psi(x_{k}+\alpha d_{k})>D_{k}+\sigma\alpha\nabla\psi(x_{k})^{T}d_{k}.
\]
This and the inequality~\eqref{eq:modArmijo} lead to
\[
\vartheta\alpha\|d_{k}\|^{2}\geq(\sigma-1)\nabla\psi(x_{k})^{T}d_{k}.
\]
Substituting $\alpha=\alpha_{k}/\rho$, we have thanks to \eqref{eq:dkInequality} and \eqref{eq:bound_lambda_min} that
\begin{equation}
\begin{split}\vartheta\alpha_{k}\rho^{-1}\mu_{k}^{-2}\|\nabla\psi(x_{k})\|^{2} & \geq\vartheta\alpha_{k}\rho^{-1}\|d_{k}\|^{2}>(\sigma-1)\nabla\psi(x_{k})^{T}d_{k}\\
 & =(1-\sigma)\nabla\psi(x_{k})^{T}H(x_{k})^{-1}\nabla\psi(x_{k})\\
 & \geq(1-\sigma)\lambda_{\min}(H(x_{k})^{-1})\|\nabla\psi(x_{k})\|^{2}\\
 & \geq(1-\sigma)(L_{0}^{2}+\mu_k)^{-1}\|\nabla\psi(x_{k})\|^{2}\\
 & \geq(1-\sigma)(L_{0}^{2}+\mu)^{-1}\|\nabla\psi(x_{k})\|^{2}.
\end{split}
\label{eq:lammin}
\end{equation}
It follows from \eqref{eq:lammin} and~\eqref{eq:muklowerbound}
that~\eqref{eq:boundalphak} is valid.
Using $\alpha_{k}=\rho^{\ell_k}\overline{\alpha}$ and~\eqref{eq:boundalphak},
we end up to
\[
\widehat{\alpha}\leq\rho^{\ell_k}\overline{\alpha}\leq\overline{\alpha},
\]
which proves~\eqref{eq:boundpk}.
\end{proof}

The first main result of this section demonstrates some properties of the sequence $D_k$ and
shows that any accumulation point of the
sequence $\left\{ x_{k}\right\}$ generated by LMLS is either a solution
of~\eqref{eq:nonequa} or a stationary point of $\psi$. 

\begin{thm}
\label{thm:suffdecrease}Let $\left\{ x_{k}\right\} $ be an infinite
sequence generated by LMLS. Then, for all $k\geq 0$,
the following assertions hold:
\begin{description}
\item [{(i)}] $\left\{ D_{k}\right\} $ is convergent and
\begin{equation}
\lim_{k\rightarrow\infty}D_{k}=\lim_{k\rightarrow\infty}\psi(x_{k});\label{eq:limdk}
\end{equation}
\item [{(ii)}] $\nabla\psi(x_{k})^{T}d_{k}\leq-c_{1}\|\nabla\psi(x_{k})\|^{2}$;
\item [{(iii)}] LMLS either stops at finite number of iterations, satisfying
$\|h(x_{k})\|\leq\varepsilon$ or $\|\nabla\psi(x_{k})\| \leq\varepsilon$,
or generates an infinite sequence $\left\{ x_{k}\right\} $ such
that any accumulation point of this sequence is a stationary point
of the merit function $\psi$, i.e.,
\begin{equation}
\lim_{k\rightarrow\infty}\|\nabla\psi(x_{k})\|=0.\label{eq:limngrad}
\end{equation}
\end{description}
\end{thm}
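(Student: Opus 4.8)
The plan is to prove the three assertions in order, relying on Proposition~\ref{eq:boundLk} and the bounds gathered in its proof. For assertion (i), note that the proof of Proposition~\ref{eq:boundLk} already gives $\psi(x_k)\le D_k\le D_{k-1}$ for every $k\ge 1$, so $\{D_k\}$ is nonincreasing and bounded below by $0$; hence it converges, say $D_k\to D_\infty$. To transfer this to $\psi(x_k)$, I would rewrite the recursion~\eqref{eq:dk} as
\[
\psi(x_k)-D_{k-1}=\frac{D_k-D_{k-1}}{1-\theta_{k-1}},
\]
and use $1-\theta_{k-1}\ge 1-\theta_{\max}>0$ together with $D_k-D_{k-1}\to 0$ to obtain $\psi(x_k)-D_{k-1}\to 0$, whence $\lim_k\psi(x_k)=D_\infty=\lim_k D_k$.

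For assertion (ii), combining the identity~\eqref{eq:descentDirection} with $d_k=-H(x_k)^{-1}\nabla\psi(x_k)$ gives
\[
\nabla\psi(x_k)^Td_k=-\nabla\psi(x_k)^TH(x_k)^{-1}\nabla\psi(x_k)\le-\lambda_{\min}\!\left(H(x_k)^{-1}\right)\|\nabla\psi(x_k)\|^2,
\]
so it remains to bound $\lambda_{\min}(H(x_k)^{-1})$ below uniformly in $k$. By~\eqref{eq:bound_lambda_min} this reduces to an upper bound on $L_0^2+\mu_k$, which holds because $x_k\in\mathcal{L}(x_0)$ forces $\|h(x_k)\|\le\|h(x_0)\|$ while~\eqref{eq:boundj} gives $\|\nabla h(x_k)\|\le L_0$; substituting these into~\eqref{eq:muk} yields a $k$-independent bound $\mu_k\le\xi_{\max}\|h(x_0)\|^\eta+\omega_{\max}(L_0\|h(x_0)\|)^\eta=:\mu_{\max}$, and one takes $c_1:=(L_0^2+\mu_{\max})^{-1}$.

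For assertion (iii), if LMLS stops the stopping test gives the first alternative directly, so assume it produces an infinite sequence. The accepted step in Line~5 of LMLS, assertion (ii), and the uniform lower bound $\alpha_k\ge\widehat{\alpha}$ from Proposition~\ref{eq:boundLk}(ii) (whose constant $\widehat{\alpha}$ does not depend on $k$) give
\[
\psi(x_{k+1})\le D_k+\sigma\alpha_k\nabla\psi(x_k)^Td_k\le D_k-\sigma\widehat{\alpha}\,c_1\|\nabla\psi(x_k)\|^2.
\]
Inserting this into~\eqref{eq:dk} and using $1-\theta_k\ge 1-\theta_{\max}$ yields $D_{k+1}\le D_k-(1-\theta_{\max})\sigma\widehat{\alpha}\,c_1\|\nabla\psi(x_k)\|^2$; telescoping and using that $\{D_k\}$ converges by (i) while $D_k\ge 0$ shows $\sum_{k\ge 0}\|\nabla\psi(x_k)\|^2<\infty$, hence $\|\nabla\psi(x_k)\|\to 0$. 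Finally, (A2) guarantees $\{x_k\}\subset\mathcal{L}(x_0)$ has accumulation points, and continuity of $\nabla\psi$ makes each of them stationary for $\psi$.

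I expect the only delicate point to be the bookkeeping with the nonmonotone term $D_k$: the line search is carried out against $D_k$ rather than $\psi(x_k)$, and after substitution into the recursion~\eqref{eq:dk} one must keep the factor $1-\theta_k$ bounded away from $0$, which is exactly why $\theta_{\max}<1$ is imposed. Once the uniform bounds $\mu<\mu_k\le\mu_{\max}$ and $\alpha_k\ge\widehat{\alpha}$ are available from Proposition~\ref{eq:boundLk} and its proof, the rest is a routine telescoping estimate.
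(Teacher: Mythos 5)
Your proposal is correct and follows essentially the same route as the paper: the same monotonicity of $\{D_k\}$ and limit identity from the recursion~\eqref{eq:dk} for (i), the same bound $\mu_k\leq(\xi_{\max}+\omega_{\max}L_0^{\eta})\|h(x_0)\|^{\eta}$ giving $c_1=(L_0^2+\overline{\mu})^{-1}$ for (ii), and the same line-search estimate with $\alpha_k\geq\widehat{\alpha}$ for (iii). The only cosmetic difference is that you close (iii) by telescoping $D_{k+1}\leq D_k-(1-\theta_{\max})\sigma\widehat{\alpha}c_1\|\nabla\psi(x_k)\|^2$ (as the paper does later in its complexity theorem), whereas the paper simply lets $D_k-\psi(x_{k+1})\to 0$ using (i); both are valid.
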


\begin{proof}
From \eqref{eq:Dk_equality}, we have $D_k\leq D_{k-1}$. This, Proposition~\ref{eq:boundLk} (i) and (A2)
imply that $\left\{ D_{k}\right\} $ is convergent.
Further, since $\theta_{k-1}\in[\theta_{\min},\theta_{\max}]$, with $\theta_{\max}\in[\theta_{\min},1[$,
we have $1-\theta_{k-1}\geq1-\theta_{\max}>0$. Taking limits from both sides of $D_{k}-D_{k-1}=(1-\theta_{k-1})(\psi(x_{k})-D_{k-1})$ when $k$
goes to infinity, we deduce~\eqref{eq:limdk}.

It follows from \eqref{eq:bound_lambda_min} and the definition of $d_{k}$ that
\begin{equation}\label{eq:gkdk}
\begin{split}
\nabla\psi(x_{k})^{T}d_{k}  &=-\nabla\psi(x_{k})^{T}H(x_{k})^{-1}\nabla\psi(x_{k})\leq-\lambda_{\min}\left(H(x_{k})^{-1}\right)\|\nabla\psi(x_{k})\|^{2} \\
 &\leq-\frac{1}{L_{0}^{2}+\mu_{k}}\|\nabla\psi(x_{k})\|^{2}.
\end{split}
\end{equation}
By the definition of $\mu_{k}$, \eqref{eq:DkPsi0}, and~\eqref{eq:boundj}, we get
\begin{equation}\label{eq:upperBoundMuk}
\begin{split}\mu_{k} & =\xi_{k}\|h(x_{k})\|^{\eta}+\omega_k\|\nabla h(x_{k})h(x_{k})\|^{\eta}\\
&\leq\xi_{\max}\|h(x_{k})\|^{\eta}+\omega_{\max}\|\nabla h(x_{k})\|^{\eta}\|h(x_{k})\|^{\eta}\\
 & \leq(\xi_{\max}+\omega_{\max}L_{0}^{\eta})\|h(x_{0})\|^{\eta}:=\overline{\mu},
\end{split}
\end{equation}
for all $k\geq0$. This and \eqref{eq:gkdk} yield
\[
\nabla\psi(x_{k})^{T}d_{k}\leq-\left(L_{0}^{2}+\overline{\mu}\right)^{-1}\|\nabla\psi(x_{k})\|^{2};
\]
that is, Assertion~(i) holds with $c_{1}:=\left(L_{0}^{2}+\overline{\mu}\right)^{-1}>0$.

Let us now prove the assertion (iii). If the algorithm stops in a finite number of iterations by either
$\|h(x_{k})\|\leq\varepsilon$ or $\nabla\psi(x_{k})\leq\varepsilon$,
the result is valid. Let us assume that the algorithm generates an
infinite sequence $\left\{ x_{k}\right\} $. For a fixed iteration
$x_{k}$, the stopping criteria of LMLS do not hold, i.e.,
$\|h(x_{k})\|>\varepsilon$ and $\|\nabla h(x_{k})h(x_{k})\|>\varepsilon$.
Therefore, from~\eqref{eq:muklowerbound},  we have
\[
\mu_{k} \geq \mu>0.
\]
It can be deduced from Line 5 of LMLS and Assertion (ii) that
\begin{equation}
D_{k}-\psi(x_{k+1})\geq-\sigma\alpha_{k}\nabla\psi(x_{k})^{T}d_{k}\geq c_{1}\sigma\alpha_{k}\|\nabla\psi(x_{k})\|^{2}\geq c_{1}\sigma\widehat{\alpha}\|\nabla\psi(x_{k})\|^{2}.\label{eq:indkpsik1}
\end{equation}
This and the assertion (ii) yield
\[
\lim_{k\rightarrow\infty}\|\nabla\psi(x_{k})\|=\lim_{k\rightarrow\infty}\|\nabla h(x_{k})h(x_{k})\|=0,
\]
i.e., any accumulation point of $\left\{ x_{k}\right\} $ is a stationary
point of $\psi$.
\end{proof}


We continue the analysis of LMLS by providing the worst-case global and evaluation
complexities of LMLS, which are upper bounds on the number of iterations
and merit function evaluations required to get an approximate stationary
point of $\psi$ satisfying $\|\nabla\psi(x)\|\leq\varepsilon$, for
the accuracy parameter $\varepsilon$, respectively. Let us denote
by $N_{i}(\varepsilon)$ and $N_{f}(\varepsilon)$ the total number
of iterations and merit function evaluations of LMLS
required to find and an $\varepsilon$-stationary point of \eqref{eq:meritfunc} .

\begin{thm}
\label{thm:complexity} Let $\left\{ x_{k}\right\} $ be the sequence
generated by LMLS and (A2) and (A3) hold. Then,
\begin{description}
\item [{(i)}] the total number of iterations to guarantee $\|\nabla\psi(x_{k})\|\leq\varepsilon$
is bounded above and
\begin{equation}
N_{i}(\varepsilon)\leq\lceil c_{2}^{-1}\psi(x_{0})\varepsilon^{-2}+1\rceil,\label{eq:nieps}
\end{equation}
with $c_{2}:=c_{1}\sigma\widehat{\alpha}(1-\theta_{\max})$;
\item [{(ii)}] the total number of function evaluations to guarantee $\|\nabla\psi(x_{k})\|\leq\varepsilon$
is bounded above and
\begin{equation}
N_{f}(\varepsilon)\leq\frac{\lceil c_{2}^{-1}\psi(x_{0})\varepsilon^{-2}+1\rceil(\log(\widehat{\alpha})-\log(\overline{\alpha}))}{\log(\rho)}.\label{eq:nfeps}
\end{equation}
\end{description}
\end{thm}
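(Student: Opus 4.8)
The plan is to run the classical ``the sum of squared gradient norms is controlled by the initial merit value'' complexity argument, adapted to the nonmonotone quantity $D_{k}$, and then to convert the resulting iteration count into an evaluation count via the per-iteration backtracking bound~\eqref{eq:boundpk} of Proposition~\ref{eq:boundLk}. For part~(i) I would first rewrite~\eqref{eq:dk} in the shifted form $D_{k+1}=(1-\theta_{k})\psi(x_{k+1})+\theta_{k}D_{k}$, which gives the exact identity
\[
D_{k}-D_{k+1}=(1-\theta_{k})\bigl(D_{k}-\psi(x_{k+1})\bigr).
\]
Inserting the sufficient-decrease estimate~\eqref{eq:indkpsik1}, namely $D_{k}-\psi(x_{k+1})\ge c_{1}\sigma\widehat{\alpha}\,\|\nabla\psi(x_{k})\|^{2}$, and using $1-\theta_{k}\ge 1-\theta_{\max}>0$, this yields $D_{k}-D_{k+1}\ge c_{2}\|\nabla\psi(x_{k})\|^{2}$ with $c_{2}=c_{1}\sigma\widehat{\alpha}(1-\theta_{\max})>0$ (positivity of $c_{2}$ uses $c_{1}>0$, $\sigma\in\,]0,1[$, $\widehat{\alpha}>0$ from Proposition~\ref{eq:boundLk}(ii), and $\theta_{\max}<1$). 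Summing over $k=0,\dots,N-1$ telescopes to $c_{2}\sum_{k=0}^{N-1}\|\nabla\psi(x_{k})\|^{2}\le D_{0}-D_{N}$, and since $D_{0}=\psi(x_{0})$ and $D_{N}\ge 0$ (clear from~\eqref{eq:Dk_psik} and the nonnegativity of $\psi$), we obtain $c_{2}\sum_{k=0}^{N-1}\|\nabla\psi(x_{k})\|^{2}\le\psi(x_{0})$.

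Next I would argue on the stopping behaviour: if LMLS has not produced an $\varepsilon$-stationary point during its first $N$ iterations then $\|\nabla\psi(x_{k})\|>\varepsilon$ for $k=0,\dots,N-1$, so the last bound forces $c_{2}N\varepsilon^{2}<\psi(x_{0})$, i.e. $N<c_{2}^{-1}\psi(x_{0})\varepsilon^{-2}$; hence the first index with $\|\nabla\psi(x_{k})\|\le\varepsilon$ satisfies $N_{i}(\varepsilon)\le\lceil c_{2}^{-1}\psi(x_{0})\varepsilon^{-2}+1\rceil$, which is~\eqref{eq:nieps}. The only non-routine point — the ``main obstacle'' — is recognising that the correct potential to telescope here is $D_{k}$ and not $\psi(x_{k})$ (the latter need not be monotone), and that Theorem~\ref{thm:suffdecrease}(ii) combined with the line search already delivers the decrease in exactly the form $D_{k}-\psi(x_{k+1})\ge(\text{const})\,\|\nabla\psi(x_{k})\|^{2}$ needed for the telescoping; after that the argument is standard.

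For part~(ii) I would account for the cost of each iteration. At iteration $k$ the inner loop of LMLS performs $\ell_{k}$ backtracking reductions, each costing one evaluation of $\psi$, and by~\eqref{eq:boundpk} one has $0\le\ell_{k}\le(\log\widehat{\alpha}-\log\overline{\alpha})/\log\rho$, a nonnegative quantity since $\rho\in\,]0,1[$ and $\widehat{\alpha}\le\overline{\alpha}$. Hence the number of merit-function evaluations spent in iteration $k$ is bounded by this same quantity, and summing over $k=0,\dots,N_{i}(\varepsilon)-1$ while substituting the bound on $N_{i}(\varepsilon)$ from part~(i) yields~\eqref{eq:nfeps}. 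This step is pure bookkeeping and raises no difficulty.
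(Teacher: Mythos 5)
Your proposal is correct and follows essentially the same route as the paper: both telescope the nonmonotone potential $D_{k}$ using $D_{k}-D_{k+1}=(1-\theta_{k})(D_{k}-\psi(x_{k+1}))\geq c_{2}\|\nabla\psi(x_{k})\|^{2}$ (from the line-search condition, Theorem~\ref{thm:suffdecrease}(ii), and the step-size bound $\alpha_{k}\geq\widehat{\alpha}$), and then bound $N_{f}(\varepsilon)$ by summing the per-iteration backtracking bound~\eqref{eq:boundpk}. The only difference is presentational: the paper argues part (i) by contradiction with a fixed $\widehat{k}$, whereas you give the equivalent direct contrapositive argument.
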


\begin{proof}
To prove Assertion (i), we define
\[
\widehat{k}:=\lceil c_{2}^{-1}\psi(x_{0})\varepsilon^{-2}+1\rceil,
\]
which suggests
\begin{equation}
c_{2}\varepsilon^{2}\widehat{k}=c_{2}\varepsilon^{2}\lceil c_{2}^{-1}\psi(x_{0})\varepsilon^{-2}+1\rceil>\psi(x_{0}).\label{eq:ineqkover0}
\end{equation}
Let us assume by contradiction that $N_{i}(\varepsilon)>\widehat{k}$,
which means that the algorithm does not stop in $\widehat{k}$ iterations.
From Line 5 of LMLS,~\eqref{eq:Dk_equality},
and Theorem~\ref{thm:suffdecrease} (ii), we obtain
\[
\begin{split}D_{k}-D_{k+1} & =(1-\theta_{k})(D_{k}-\psi(x_{k+1}))\geq-\sigma\widehat{\alpha}(1-\theta_{k})\nabla\psi(x_{k})^{T}d_{k}\\
 & \geq-\sigma\widehat{\alpha}(1-\theta_{\max})\nabla\psi(x_{k})^{T}d_{k}\geq c_{1}\sigma\widehat{\alpha}(1-\theta_{\max})\|\nabla\psi(x_{k})\|^{2}\\
 & =c_{2}\|\nabla\psi(x_{k})\|^{2},
\end{split}
\]
leading to
\[
\psi(x_{0})=D_{0}\geq D_{0}-D_{\widehat{k}}=\sum_{i=0}^{\widehat{k}-1}(D_{i}-D_{i+1})\geq c_{2}\sum_{i=0}^{\widehat{k}-1}\|\nabla\psi(x_{i})\|^{2}>c_{2}\varepsilon^{2}\widehat{k},
\]
which contradicts to~\eqref{eq:ineqkover0}. Therefore,~\eqref{eq:nieps}
is valid.

Considering the bound on the number of merit function evaluations
in step $k$ ($\ell_{k}$, given in Proposition \ref{eq:boundLk}),
the following upper bound on the total number of merit function evaluations
can be provided by
\[
\begin{split}
N_{f}(\varepsilon) &\leq\sum_{k=0}^{N_{k}(\varepsilon)-1}\ell_k\leq\sum_{k=0}^{N_{k}(\varepsilon)-1}\frac{\log(\widehat{\alpha})-\log(\overline{\alpha})}{\log(\rho)}\\
&\leq\frac{\lceil c_{2}^{-1}\psi(x_{0})\varepsilon^{-2}+1\rceil(\log(\widehat{\alpha})-\log(\overline{\alpha}))}{\log(\rho)},
\end{split}
\]
giving the results.
\end{proof}

Theorem \ref{thm:complexity} implies that the worst-case global and
evaluation complexities of LMLS to attain the approximate
stationary point of $\psi$ are of the order $\mathcal{O}(\varepsilon^{-2})$,
which is the same as the gradient method; see, e.g., \cite{NesB}. However,
in practice Levenberg-Marquardt methods usually performs much better than 
the gradient method.

Let us compute here the second derivative of $\psi$ at $x$, i.e.,
\begin{equation}\label{eq:secondDer}
\nabla^2 \psi(x) = \nabla h(x)  \nabla h(x)^T+\sum_{i=1}^{m} h_i(x) \nabla^2 h_i(x)=  \nabla h(x)  \nabla h(x)^T+S(x),
\end{equation}
where $S(x):= \sum_{i=1}^{m} h_i(x) \nabla^2 h_i(x)$. Three types of the problem (\ref{eq:nonequa}) are recognised with respect to the magnitude of $\|h(x^*)\|$: (i) if $h(x^*)=0$, the
problem is called {\it zero residual}; (ii) if $\|h(x^*)||$ is small, the problem is called {\it small residual}; and if $\|h(x^*)\|$ is
large, the problem is called {\it large residual}; see, e.g., \cite{Dennis1996}. Under the nonsingularity assumption of $\nabla h(x)$ at the limit point $x^*$ of $\{x_k\}$ and using 
(\ref{eq:secondDer}), we investigate the superlinear convergence of 
$\left\{ x_{k}\right\} $  generated by LMLS for zero residual problems, which is the same as the convergence
rate given for quasi-Newton methods; see \cite{Dennis1977}.  

\begin{thm}\label{thm:covRateLMLS}
Let $\psi:\mathbb{R}^m \rightarrow \mathbb{R}$ be twice continuously differentiable on $\mathcal{L}(x_{0})$,
and $\left\{ x_{k}\right\} $ be the sequence generated by LMLS and (A1)-(A3) hold. If 
$\left\{ x_{k}\right\} $ converges to $x^*$ and $\nabla h(x^*)$ has full rank, then 
\begin{equation} \label{eq:limratiosup}
\lim_{k\rightarrow \infty} \frac{\| \nabla \psi(x_k)+\nabla^2 \psi(x_k) d_k\|}{\|d_k\|}  =0,
\end{equation}
there exists $\overline{k}\geq 0$ such that $\alpha_k=1$ for all $k\geq \overline{k}$, and  $\left\{ x_{k}\right\} $ converges 
to $x^*$ superlinearly.
\end{thm}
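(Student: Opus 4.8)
The plan is to argue in three stages: first establish the Dennis–Moré-type limit \eqref{eq:limratiosup}, then deduce that the unit step-size is eventually accepted, and finally invoke a standard Dennis–Moré characterisation to conclude superlinear convergence. First I would exploit the structure of the Levenberg–Marquardt direction: from \eqref{eq:LM_direction} we have $\nabla\psi(x_k)+\nabla h(x_k)\nabla h(x_k)^T d_k = -\mu_k d_k$, so by \eqref{eq:secondDer},
\[
\nabla\psi(x_k)+\nabla^2\psi(x_k)d_k = -\mu_k d_k + S(x_k)d_k.
\]
Dividing by $\|d_k\|$, it suffices to show $\mu_k\to 0$ and $\|S(x_k)\|\to 0$. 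For $\mu_k\to 0$: since $x_k\to x^*$ and $\nabla h(x^*)$ has full rank, the H\"older metric subregularity (A1) together with continuity forces $h(x^*)=0$ (this is the zero-residual regime), whence $\|h(x_k)\|\to 0$ and $\|\nabla h(x_k)h(x_k)\|\to 0$, so from \eqref{eq:muk} and the boundedness of $\xi_k,\omega_k$ we get $\mu_k\to 0$. For $\|S(x_k)\|\to 0$: by \eqref{eq:secondDer}, $S(x_k)=\sum_i h_i(x_k)\nabla^2 h_i(x_k)$, and since $\psi$ is twice continuously differentiable on the compact set $\mathcal{L}(x_0)$ (using (A2)), each $\nabla^2 h_i$ is bounded there, so $\|S(x_k)\|\le C\|h(x_k)\|_1\to 0$. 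Combining, $\|\nabla\psi(x_k)+\nabla^2\psi(x_k)d_k\|\le(\mu_k+\|S(x_k)\|)\|d_k\|$, which gives \eqref{eq:limratiosup}.

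Next I would show $\alpha_k=1$ is eventually accepted by the line search in Line 5 of LMLS. Near $x^*$ the Hessian $\nabla^2\psi(x^*)=\nabla h(x^*)\nabla h(x^*)^T$ is positive definite (full rank, zero residual), so $\psi$ is locally strongly convex with a Taylor expansion $\psi(x_k+d_k)=\psi(x_k)+\nabla\psi(x_k)^Td_k+\tfrac12 d_k^T\nabla^2\psi(x_k)d_k+o(\|d_k\|^2)$. Using \eqref{eq:limratiosup} to replace $\nabla^2\psi(x_k)d_k$ by $-\nabla\psi(x_k)+o(\|d_k\|)$, one obtains $\psi(x_k+d_k)=\psi(x_k)+\tfrac12\nabla\psi(x_k)^Td_k+o(\|d_k\|^2)$; since $\nabla\psi(x_k)^Td_k\le -c_1\|\nabla\psi(x_k)\|^2$ is of the same order as $-\|d_k\|^2$ (because $d_k=-H(x_k)^{-1}\nabla\psi(x_k)$ with $H(x_k)^{-1}$ bounded above and below), for any $\sigma<\tfrac12$ the monotone Armijo condition $\psi(x_k+d_k)\le\psi(x_k)+\sigma\nabla\psi(x_k)^Td_k$ holds for $k$ large. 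Since $D_k\ge\psi(x_k)$ by \eqref{eq:Dk_psik}, the nonmonotone test $\psi(x_k+d_k)\le D_k+\sigma\nabla\psi(x_k)^Td_k$ is a fortiori satisfied, so the backtracking loop accepts $\ell=0$, i.e. $\alpha_k=\overline{\alpha}$; here one needs $\overline{\alpha}=1$ (or the standard convention that the full LM step corresponds to $\alpha=1$, which I would state explicitly), giving $\alpha_k=1$ for all $k\ge\overline{k}$.

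Finally, with $\alpha_k=1$ the iteration is $x_{k+1}=x_k+d_k$ with $d_k$ satisfying the Dennis–Moré condition \eqref{eq:limratiosup} relative to $\nabla^2\psi$; since $\nabla^2\psi(x^*)$ is nonsingular and $x_k\to x^*$, the Dennis–Moré theorem (cf.~\cite{Dennis1977}) yields $\|x_{k+1}-x^*\|=o(\|x_k-x^*\|)$, i.e. superlinear convergence. The main obstacle I anticipate is the interplay between the nonmonotone reference value $D_k$ and the unit-step argument: one must be careful that the nonmonotone acceptance test does not allow $\alpha_k$ to be something other than $\overline{\alpha}$ on the accepted iterate — but this is resolved cleanly by the observation $D_k\ge\psi(x_k)$ from \eqref{eq:Dk_psik}, so nonmonotonicity only makes acceptance easier, never harder. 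A secondary technical point is verifying that $\|d_k\|$ and $\|\nabla\psi(x_k)\|$ are comparable near $x^*$ (needed to convert the $-c_1\|\nabla\psi(x_k)\|^2$ bound into a $-c\|d_k\|^2$ bound), which follows from the uniform bounds $\mu\le\mu_k\le\overline{\mu}$ near convergence together with $\|\nabla h(x_k)\|\le L_0$, so that $H(x_k)$ has eigenvalues bounded away from $0$ and $\infty$.
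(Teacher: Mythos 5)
Your proposal is correct and follows essentially the same route as the paper for the core of the argument: you use the identity $\nabla\psi(x_k)+\nabla^2\psi(x_k)d_k=(S(x_k)-\mu_k I)d_k$ coming from \eqref{eq:LM_direction} and \eqref{eq:secondDer}, show $\mu_k\to 0$ from $h(x^*)=0$ and $\|S(x_k)\|\to 0$ from the boundedness of $\nabla^2 h_i$ on the compact set $\mathcal{L}(x_0)$, and then pass to superlinear convergence via the Dennis--Mor\'e theory, exactly as in the paper. The one place you diverge is the acceptance of the unit step: the paper simply invokes Theorem~6.4 of \cite{Dennis1977} to get $\alpha_k=1$ eventually, whereas you reprove that step directly by a Taylor expansion of $\psi(x_k+d_k)$ combined with \eqref{eq:limratiosup} and the comparability of $\|d_k\|$ and $\|\nabla\psi(x_k)\|$. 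This buys some genuine added value: you make explicit two hypotheses that the paper leaves implicit, namely that the full Levenberg--Marquardt step must actually be the first trial step (so $\overline{\alpha}=1$, since $\alpha_k=\rho^{\ell}\overline{\alpha}$ can never equal $1$ otherwise) and that $\sigma<\tfrac12$ is needed for the full-step Armijo test (the theorem as stated only assumes $\sigma\in\left]0,1\right[$), and you also observe via \eqref{eq:Dk_psik} that the nonmonotone test with $D_k\geq\psi(x_k)$ is weaker than the monotone one, so Dennis--Mor\'e-type acceptance carries over --- a point the paper's citation of the monotone result glosses over. A minor remark: your justification of $h(x^*)=0$ through (A1) and continuity is looser than the paper's, which deduces it from $\nabla h(x^*)h(x^*)=0$ (Theorem~\ref{thm:suffdecrease}) together with the full-rank assumption; either way the conclusion and the rest of the argument are unaffected.
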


\begin{proof}
Since $\nabla h(x^*)$ has full rank, (\ref{eq:limngrad}) implies $h(x^*)=0$. This 
and \eqref{eq:secondDer} yield that $\nabla^2 \psi(x^*)$ is positive definite. Hence, $h(x^*)=0$ leads to
\begin{equation}\label{eq:limmuk11}
\lim_{k\rightarrow \infty} \mu_k\leq \xi_{max} \lim_{k\rightarrow \infty} \|h(x_k)\|^\eta+\omega_{max}\lim_{k\rightarrow \infty} \|\nabla \psi(x_k)\| ^\eta=0.
\end{equation}
From \eqref{eq:secondDer}, we obtain
\[
\nabla \psi(x_k)+\nabla^2 \psi(x_k) d_k= (S(x_k)-\mu_k I) d_k,
\]
which implies
\[
\begin{split}
 \frac{\| \nabla \psi(x_k)+\nabla^2 \psi(x_k) d_k\|}{\|d_k\|} & \leq  \frac{\|S(x_k)-\mu_k I\| \|d_k\|}{\|d_k\|}\\
&\leq \|S(x_k)\|+\mu_k = \|\sum_{i=1}^{m} h_i(x) \nabla^2 h_i(x)\|+\mu_k\\
&\leq \sum_{i=1}^{m} \|h_i(x_k)\| \|\nabla^2 h_i(x)\|+\mu_k.
\end{split}
\]
Since $\psi$ is twice continuously differentiable in the compact set $\mathcal{L}(x_{0})$,  $\|\nabla^2 h_i(x)\|$ ($i=1,\ldots,m$) 
is  bounded. This, the last inequality, and (\ref{eq:limmuk11}) give
\[
\lim_{k\rightarrow \infty} \frac{\| \nabla \psi(x_k)+\nabla^2 \psi(x_k) d_k\|}{\|d_k\|}  \leq 
\lim_{k\rightarrow \infty} \sum_{i=1}^{m} \|h_i(x_k)\| \|\nabla^2 h_i(x)\|+\lim_{k\rightarrow \infty} \mu_k=0,
\]
giving (\ref{eq:limratiosup}).

From  Theorem 6.4 in \cite{Dennis1977} and (\ref{eq:limratiosup}), we have that \eqref{eq:armijo} is valid with $\alpha_k=1$,
for all $k$ sufficiently large. Therefore, the superlinear convergence of $\{x_k\}$ follows from Theorem 3.1 in  
\cite{Dennis1977}.
\end{proof}

\section{Levenberg\textendash Marquardt trust-region method}
Trust-region methods are known to be effective for nonconvex optimisation problems (see \cite{conn2000}). 
Therefore, this section concerns with the development of a globally convergent Levenberg\textendash Marquardt method
using a trust-region technique and the investigation on its convergence analysis and complexity.

Let us start with some details of a trust-region globalisation technique that will be coupled with 
the Levenberg\textendash Marquardt direction. We first draw your attention to some 
literature, e.g., \cite{Ahookhosh_Amini_2012,Ahookhosh_2015} and references therein, about the 
efficiency of nonmonotone trust-region methods compared to monotone ones for either optimisation 
or nonlinear systems. This motivates us to develop a nonmonotone Levenberg\textendash Marquardt 
trust-region method for solving systems of nonlinear equations.
To do so, we take advantage of the quadratic function $q_k$ (\ref{eq:qk})
and define the ratio
\begin{equation}\label{eq:rkh}
\widehat{r}_k:=\frac{D_k-\psi(x_k+d_k)}{q_k(0)-q_k(d_k)},
\end{equation}
where the nonmonotone term $D_k$ defined by (\ref{eq:dk}). In this ratio, the nominator is called 
nonmonotone reduction and the denominator is called the predicted reduction. Further, let 
us introduce a new Levenberg\textendash Marquardt parameter that is a modified version of (\ref{eq:muk}), 
i.e.,
\begin{equation} \label{eq:mukh}
\widehat{\mu}_k:=\max \left\{ \mu_{min}, \lambda_k \mu_k \right\},
\end{equation}
where $\mu_k$ is given by (\ref{eq:muk}) with $\eta\in{]0,4\delta[}$, $\mu_{min}>0$, $\xi_{k}\in[\xi_{\min},\xi_{\max}]$, 
$\omega_k\in[\omega_{\min},\omega_{\max}]$ with $\xi_{\min}+\omega_{\min}>0$, 
and $\lambda_k$ is updated by
\[
\lambda_{k+1} := \left\{
\begin{array}{ll}
\rho_1 \lambda_k & \mathrm{if}~ \widehat{r}_k< \upsilon_1,\\
\lambda_k     & \mathrm{if}~ \upsilon_1 \leq \widehat{r}_k <\upsilon_2,\\
\rho_2 \lambda_k & \mathrm{if}~ \widehat{r}_k \geq \upsilon_2,
\end{array}
\right.
\]
in which $0<\rho_2<1<\rho_1$ and $0<\upsilon_1<\upsilon_2<1$ are some constants. A simple comparison between
(\ref{eq:muk}) and (\ref{eq:mukh}) indicates that $\widehat{\mu}_k$ is lower bounded and $\lambda_k$ helps
to have a better control on the Levenberg\textendash Marquardt parameter, which shows its effect on numerical
performance of the method (see Section \ref{sec.numapp} for more details).

In our Levenberg\textendash Marquardt trust-region method,
we first determine $\widehat{\mu}_k$ (\ref{eq:mukh}), specify the direction
$d_k$ by solving the linear system (\ref{eq:LM_direction}), and compute the  
ratio $\widehat{r}_k$ (\ref{eq:rkh}). If $\widehat{r}_k\geq \upsilon_1$, the trial point $d_k$ is accepted, 
i.e., $x_{k+1}=x_k+d_k$; otherwise, the parameter $\lambda_k$ should be increased by setting 
$\lambda_k=\rho_1 \lambda_k$. In the case that $\widehat{r}_k\geq \upsilon_2$, the parameter 
$\lambda_k$ is decreased by setting $\lambda_{k+1}=\rho_2 \lambda_k$. 
The final step will be the evaluation of stopping criteria, which here is either 
$\|h(x_{k+1})\|\leq\varepsilon$ or $\|\nabla\psi(x_{k+1})\|\leq\varepsilon$.
We summarise this scheme in Algorithm \ref{a.trustregion}.

\vspace{-1mm}
\begin{algorithm}
\DontPrintSemicolon \KwIn{ $x_{0}\in\mathbb{R}^{m}$, $\eta>0$,
$\varepsilon>0$, $0<\rho_2<1<\rho_1$,$0<\upsilon_1<\upsilon_2<1$,
$\mu_{min}>0$, $\xi_0\in[\xi_{min},\xi_{max}]$, 
$\omega_0 \in [0,\omega_{max}]$, $\theta_0 \in [\theta_{min},\theta_{max}]$;} 
\Begin{ $k:=0$;~ $\lambda_0=1$;~$\mu_0:=\max \left\{ \mu_{min}, \lambda_0 \left(\xi_0 \|h(x_0)\|^{\eta}+\omega_0\|\nabla h(x_0)h(x_0)\|^{\eta} \right) \right\};$\;
\While {$\|h(x_{k})\|>\varepsilon$ or $\|\nabla\psi(x_{k})\|>\varepsilon$}{
solve the linear system~\eqref{eq:LM_direction} to specify $d_{k}$; 
compute $\widehat{r}_k$ by (\ref{eq:rkh});~ $p=0$;\;
\While {$\widehat{r}_k<\upsilon_1$}{
$p=p+1$,~$\lambda_k=\rho_1^p\lambda_k$;
solve~\eqref{eq:LM_direction} to specify $d_{k}$;~
compute $\widehat{r}_k$ by (\ref{eq:rkh});\;
} 
\eIf{ $\widehat{r}_k\geq\upsilon_2$}{
$\lambda_{k+1}=\rho_2 \lambda_k$;\;
}{
$\lambda_{k+1}=\lambda_k$;\;
}
$p_k=p$;~$x_{k+1}=x_{k}+d_{k}$;~ update $\xi_{k}$, $\omega_k$, and $\theta_k$;
update $\mu_{k}$ and $D_{k}$ by~\eqref{eq:muk} and~\eqref{eq:dk};\; } 
} \caption{ {\bf LMTR} (Levenberg--Marquardt Trust-Region algorithm)}
\label{a.trustregion}
\end{algorithm}

\vspace{2mm}
In LMTR, the loop starts from Line 5 to Line 7 is called the {\it inner loop} and 
the loop starts from Line 3 to Line 14 is called the {\it outer loop}.

The subsequent proposition points out that the inner loop of LMTR
is terminated after a finite number of steps and provides upper bounds for 
$\widehat{\mu}_k$ and $p_k$.

\begin{prop}\label{prop:boundpk}
Let $\left\{ x_{k}\right\} $ be an infinite
sequence generated by LMTR and (A1)-(A3) holds. Then, 
\begin{description}
\item [{(i)}] $q_k(0)-q_k(d_k)\geq \frac{1}{2(L_0^2+\mu_{min})} \|\nabla \psi(x_k)\|^2$; 
\item [{(ii)}] $q_k(0)-q_k(d_k)\leq (\frac{1}{2} L_0^2+\widehat{\mu}_k) \|d_k\|^2$; 
\item [{(iii)}] the inner loop is
terminated in a finite number of steps. Moreover, if LMTR does not terminate at $x_k$, then
\begin{equation} \label{eq:boundmukh}
\widehat{\mu}_k\leq \tau,
\end{equation}
with
\[
\tau:=\frac{\rho_1}{(1-\upsilon_1)}\left(\frac{1}{2}(1+\upsilon_1)L_0^2+\frac{1}{2} L^2 L_{0}^2\mu_{min}^{-2} \|h(x_{0})\|^2 
 + (1+ L_{0}^{2}\mu_{min}^{-1})L\|h(x_{0})\| \right),
\]
and
\begin{equation} \label{eq:boundlk}
p_k \leq \frac{\log(\tau)-\log(\mu_{min}))}{\log(\rho_1)}.
\end{equation}
\end{description}
\end{prop}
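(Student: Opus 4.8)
The plan is to handle the three assertions essentially independently, relying on the same toolbox that produced Proposition~\ref{eq:boundLk} in the line-search section. For assertion (i), I would start from the closed form of $d_k$ as the solution of \eqref{eq:LM_direction}, namely $d_k = -H(x_k)^{-1}\nabla\psi(x_k)$ with $H(x_k)=\nabla h(x_k)\nabla h(x_k)^T+\widehat\mu_k I$, and compute $q_k(0)-q_k(d_k)$ directly. Expanding $q_k(d_k)=\tfrac12\|\nabla h(x_k)^Td_k+h(x_k)\|^2$ and using $\nabla\psi(x_k)=\nabla h(x_k)h(x_k)$, one gets $q_k(0)-q_k(d_k) = -\nabla\psi(x_k)^Td_k - \tfrac12\|\nabla h(x_k)^Td_k\|^2 = \nabla\psi(x_k)^TH(x_k)^{-1}\nabla\psi(x_k) - \tfrac12 d_k^T\nabla h(x_k)\nabla h(x_k)^Td_k$. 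Rewriting the quadratic form, this equals $\tfrac12 d_k^T\big(\nabla h(x_k)\nabla h(x_k)^T + 2\widehat\mu_k I\big)d_k \ge 0$; combining with the lower bound $\lambda_{\min}(H(x_k)^{-1})\ge 1/(L_0^2+\widehat\mu_k)$ from an inequality analogous to \eqref{eq:bound_lambda_min}, together with $\widehat\mu_k\ge\mu_{min}$ by \eqref{eq:mukh}, and the bound $\|\nabla h(x_k)\|\le L_0$ on $\mathcal L(x_0)$, yields $q_k(0)-q_k(d_k)\ge \tfrac12(L_0^2+\mu_{min})^{-1}\|\nabla\psi(x_k)\|^2$. (One must first verify $x_k\in\mathcal L(x_0)$ for LMTR, which follows because every accepted step has $\widehat r_k\ge\upsilon_1>0$, so $D_k-\psi(x_{k+1})\ge \upsilon_1(q_k(0)-q_k(d_k))\ge 0$, and then the same telescoping argument as in \eqref{eq:DkPsi0} applies.)

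For assertion (ii), I would again use $q_k(0)-q_k(d_k)=\tfrac12 d_k^T(\nabla h(x_k)\nabla h(x_k)^T+2\widehat\mu_k I)d_k$ and bound the matrix in operator norm by $\|\nabla h(x_k)\|^2+2\widehat\mu_k\le L_0^2+2\widehat\mu_k$; dividing the $\tfrac12$ through gives $q_k(0)-q_k(d_k)\le(\tfrac12 L_0^2+\widehat\mu_k)\|d_k\|^2$ directly.

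Assertion (iii) is the substantive one and the main obstacle. The idea is the standard trust-region argument: if the inner loop never exited, $\lambda_k$ (hence $\widehat\mu_k$) would grow without bound, forcing $\|d_k\|\to 0$ and $\widehat r_k\to 1>\upsilon_1$, a contradiction. To make this quantitative and extract the explicit bound $\tau$, I would estimate $|\widehat r_k-1|$, or rather show $\widehat r_k\ge\upsilon_1$ once $\widehat\mu_k$ is large enough. Writing $D_k-\psi(x_k+d_k)=(D_k-\psi(x_k))+(\psi(x_k)-\psi(x_k+d_k))\ge \psi(x_k)-\psi(x_k+d_k)$ by \eqref{eq:Dk_psik}, and using the Taylor/Cauchy--Schwarz expansion of $\psi(x_k+d_k)$ exactly as in \eqref{eq:Taylor}--\eqref{eq:modArmijo} but with step $\alpha=1$, one obtains $\psi(x_k+d_k)\le\psi(x_k)+\nabla\psi(x_k)^Td_k+\vartheta_k\|d_k\|^2$ where $\vartheta_k=\tfrac12 L_0^2+\tfrac12 L^2\widehat\mu_k^{-2}L_0^2\|h(x_0)\|^2+(1+L_0^2\widehat\mu_k^{-1})L\|h(x_0)\|$ (using $\|d_k\|\le\widehat\mu_k^{-1}L_0\|h(x_0)\|\le\mu_{min}^{-1}L_0\|h(x_0)\|$ from an analogue of \eqref{eq:dkbound}). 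Hence the nonmonotone reduction is at least $-\nabla\psi(x_k)^Td_k-\vartheta_k\|d_k\|^2$. Now I combine this with the lower bound on the predicted reduction. Since $-\nabla\psi(x_k)^Td_k = d_k^TH(x_k)d_k = \|d_k\|^2\big(\tfrac{d_k^T(\nabla h(x_k)\nabla h(x_k)^T)d_k}{\|d_k\|^2}+\widehat\mu_k\big)\ge\widehat\mu_k\|d_k\|^2$, and $q_k(0)-q_k(d_k)\le(\tfrac12 L_0^2+\widehat\mu_k)\|d_k\|^2$ by (ii), we get
\[
\widehat r_k \ge \frac{-\nabla\psi(x_k)^Td_k-\vartheta_k\|d_k\|^2}{(\tfrac12 L_0^2+\widehat\mu_k)\|d_k\|^2}\ge \frac{\widehat\mu_k-\vartheta_k}{\tfrac12 L_0^2+\widehat\mu_k}.
\]
A short manipulation shows this last quantity is $\ge\upsilon_1$ precisely when $\widehat\mu_k\ge\frac{1}{1-\upsilon_1}\big(\vartheta_k+\tfrac{\upsilon_1}{2}L_0^2\big)$; bounding $\vartheta_k$ from above by replacing $\widehat\mu_k^{-1}$ with $\mu_{min}^{-1}$ (legitimate since $\widehat\mu_k\ge\mu_{min}$) gives a constant, and one checks that $\tfrac{\rho_1}{1-\upsilon_1}\big(\tfrac12(1+\upsilon_1)L_0^2+\tfrac12 L^2L_0^2\mu_{min}^{-2}\|h(x_0)\|^2+(1+L_0^2\mu_{min}^{-1})L\|h(x_0)\|\big)=\tau$ is exactly the threshold (the factor $\rho_1$ accounts for the last overshoot of the geometric update $\lambda_k\mapsto\rho_1\lambda_k$). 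Thus the inner loop exits before $\widehat\mu_k$ exceeds $\tau$, proving \eqref{eq:boundmukh}; and since each inner iteration multiplies $\lambda_k$ by $\rho_1$ starting from a value $\ge$ something giving $\widehat\mu_k\ge\mu_{min}$, the number of inner steps $p_k$ satisfies $\mu_{min}\rho_1^{p_k}\le\widehat\mu_k\le\tau$, i.e. \eqref{eq:boundlk}. The delicate points I would be careful about are: confirming $x_k\in\mathcal L(x_0)$ so that $L_0$ and $\|h(x_k)\|\le\|h(x_0)\|$ are available throughout the inner loop (the iterate $x_k$ is fixed during the inner loop, only $\lambda_k$ changes, so this is fine once it holds at the start of the outer iteration), and tracking how the geometric growth of $\lambda_k$ interacts with the threshold to land exactly on the stated $\tau$ rather than $\tau/\rho_1$.
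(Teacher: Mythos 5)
Your proposal follows essentially the same route as the paper's proof: the same algebraic identities for (i)--(ii), the same observation that accepted steps ($\widehat r_k\ge\upsilon_1$) keep $\psi(x_k)\le D_k\le\psi(x_0)$ and hence $x_k\in\mathcal{L}(x_0)$, and for (iii) the same Taylor/Cauchy--Schwarz bound on the actual-versus-model discrepancy combined with $-\nabla\psi(x_k)^Td_k\ge\widehat{\mu}_k\|d_k\|^2$ and part (ii), which yields precisely the acceptance threshold $\tau/\rho_1$, the overshoot factor $\rho_1$, and the bound $\mu_{\min}\rho_1^{p_k}\le\widehat{\mu}_k\le\tau$ (the paper phrases this through the last rejected parameter $\overline{\mu}_k=\rho_1^{p_k-1}\lambda_k\mu_k$, which is your inequality rearranged, and adds a separate limiting argument $r_k\to1$ for finiteness that your threshold argument subsumes). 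In (i) you also reproduce the paper's own substitution of $\mu_{\min}$ for $\widehat{\mu}_k$ in the denominator $1/(L_0^2+\widehat{\mu}_k)$, so your argument coincides with the paper's there as well.
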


\begin{proof}
By the definition of $q_k$ in \eqref{eq:qk} and \eqref{eq:bound_lambda_min}, we get
\begin{equation}\label{eq:pred1}
\begin{split}
q_k(0)-q_k(d_k) &\geq\frac{1}{2} \|h(x_k)\|^2-\frac{1}{2}\|\nabla h(x_k)^T d_k+h(x_k)\|^2-\frac{1}{2} \widehat{\mu}_k \|d_k\|^2\\
& = -\frac{1}{2} d_k^T H_k d_k - \nabla \psi(x_k)^T d_k = \frac{1}{2} \nabla \psi(x_k)^T H_k^{-1} \nabla \psi(x_k)\\
&\geq \frac{1}{2(L_0^2+\mu_{min})} \|\nabla \psi(x_k)\|^2,
\end{split}
\end{equation}
giving Assertion (i). 

It follows from \eqref{eq:qk} that
\begin{equation}\label{eq:pred2}
\begin{split}
q_k(0)-q_k(d_k) &=\frac{1}{2} \|h(x_k)\|^2-\frac{1}{2}\|\nabla h(x_k)^T d_k+h(x_k)\|^2\\
& = -\frac{1}{2} d_k \nabla h(x_k) \nabla h(x_k)^T d_k - h(x_k)^T \nabla h(x_k)^T d_k\\
& = \frac{1}{2} d_k^T H_k d_k +\frac{1}{2} \widehat{\mu}_k \|d_k\|^2 \\
&\leq \left(\frac{1}{2} L_0^2+\widehat{\mu}_k\right) \|d_k\|^2,
\end{split}
\end{equation}
proving Assertion (ii). 

For the first part of the assertion (iii), we show that the inner loop is terminated after a finite number of steps.
From Assertion (i) and \eqref{eq:dkInequality}, we obtain
\begin{equation}\label{eq:pred3}
\begin{split}
q_k(0)-q_k(d_k) \geq \frac{1}{2(L_0^2+\mu_{min})} \|\nabla \psi(x_k)\|^2 \geq \frac{\widehat{\mu}_k^2}{2(L_0^2+\mu_{min})} \|d_k\|^2.
\end{split}
\end{equation}
By (A3) and \eqref{eq:dkInequality}, for $t\in [0,1]$, we get
\begin{equation}\label{eq:ineq00}
\|\nabla h(x_k+t d_k)-\nabla h(x_k)\| \leq t L \|d_k\| \leq t L \widehat{\mu}_k^{-1} \|\nabla \psi (x_k)\| 
\leq L L_0 \widehat{\mu}_k^{-1} \|h(x_k)\|.
\end{equation}
By the Taylor expansion of $\psi(x_k+d_k)$ around $x_k$, we come to
\begin{equation}\label{eq:ActRedIneq0}
\begin{split}
\psi(x_k+d_k) &= \frac{1}{2} \|h(x_k+d_k)\|^2
=\frac{1}{2}\left\Vert h(x_{k})+\nabla h(x_{k})^{T}d_{k}+\int_{0}^{1}(\nabla h(x_{k}+t d_{k})-\nabla h(x_{k}))^{T}d_{k}~dt\right\Vert ^{2} \\
& =\frac{1}{2}\left\Vert h(x_{k})+\nabla h(x_{k})^{T}d_{k}\right\Vert^2+\frac{1}{2}\left\Vert \int_{0}^{1}(\nabla h(x_{k}+t d_{k})-\nabla h(x_{k}))^{T}d_{k}~dt\right\Vert ^{2}\\
&\hspace{4mm}+(h(x_{k})+\nabla h(x_{k})^{T}d_{k})^{T}\int_{0}^{1}(\nabla h(x_{k}+t d_{k})-\nabla h(x_{k}))^{T}d_{k}~dt.
\end{split}
\end{equation}
From this, \eqref{eq:inequalityh}, and \eqref{eq:ineq00}, it consequently holds 
\begin{equation}\label{eq:ActRedIneq1}
\begin{split}
\left| q_k(d_k)-\psi(x_k+d_k) \right| &= \left| \frac{1}{2}\|\nabla h(x_k)^T d_k+h(x_k)\|^2-\psi(x_k+d_k)\right|\\
& = \frac{1}{2}\left\Vert \int_{0}^{1}(\nabla h(x_{k}+t d_{k})-\nabla h(x_{k}))^{T}d_{k}~dt\right\Vert ^{2}\\
&\hspace{4mm}+(h(x_{k})+\nabla h(x_{k})^{T}d_{k})^{T}\int_{0}^{1}(\nabla h(x_{k}+t d_{k})-\nabla h(x_{k}))^{T}d_{k}~dt\\
& \leq  \frac{1}{2} \left(\int_{0}^{1} \Vert\nabla h(x_{k}+t d_{k})-\nabla h(x_{k})\Vert \Vert d_{k} \Vert~dt\right)^{2}\\
&\hspace{4mm}+ \Vert h(x_{k})+\nabla h(x_{k})^{T}d_{k}\Vert \int_{0}^{1}\Vert \nabla h(x_{k}+t d_{k})-\nabla h(x_{k})\Vert \Vert d_{k}\Vert~dt\\
& \leq \left(\frac{1}{2} L^2 L_0^2 \|h(x_k)\|^2 \widehat{\mu}_k^{-2}+(1+L_0^2 \widehat{\mu}_k^{-1}) \|h(x_k)\| \right) \|d_k\|^2.
\end{split}
\end{equation}
Since $\widehat{r}_{k-1}\geq \upsilon_1$, we have $\psi(x_{k})\leq D_{k-1}$. This and
\begin{equation}\label{eq:DkDk1}
D_{k}-D_{k-1}=(1-\theta_{k-1})(\psi(x_{k})-D_{k-1})\leq 0, ~~~ D_{k}-\psi(x_{k})=\theta_{k-1}(D_{k-1}-\psi(x_{k}))\geq 0
\end{equation}
imply $D_{k}\leq D_{k-1}$ and $\psi(x_k)\leq D_k$, leading to  
$\psi(x_k)\leq D_{k}\leq D_{k-1}\leq \ldots \leq D_0 = \psi(x_0)$, i.e.,
\begin{equation}\label{eq:xkLx0}
x_k \in \mathcal{L}(x_0).
\end{equation}
It can be deduced from this and \eqref{eq:ActRedIneq1} that
\begin{equation}\label{eq:ActRedIneq2}
\begin{split}
q_k(d_k)-\psi(x_k+d_k) &\leq \left(\frac{1}{2} L^2 L_0^2 \|h(x_0)\|^2 \widehat{\mu}_k^{-2}+(1+L_0^2 \widehat{\mu}_k^{-1}) \|h(x_0)\| \right) \|d_k\|^2\\
&= \widehat{\mu}_k^{-2} \left(\widetilde{c}_0+\widetilde{c}_1 \widehat{\mu}_k+\widetilde{c}_2 \widehat{\mu}_k^2\right)
\|d_k\|^2,
\end{split}
\end{equation}
where $\widetilde{c}_0:=\frac{1}{2} L^2 L_0^2 \|h(x_0)\|^2$, $\widetilde{c}_1:=L_0^2\|h(x_0)\|$, and
$\widetilde{c}_2:=\|h(x_0)\|$. 
For sufficiently large $p_k$, we have $\widehat{\mu}_k=\rho_1^{p_k} \lambda_k \mu_k$. This, (\ref{eq:rk}), and \eqref{eq:pred3} yield
\[
\begin{split}
\left\vert r_k-1 \right\vert &= \left\vert \frac{q_k(d_k)-\psi(x_k+d_k)}{q_k(0)-q_k(d_k)} \right\vert \\
&\leq 
\frac{2(L_0^2+\mu_{min}) \left(\widetilde{c}_0+\widetilde{c}_1 \rho_1^{p_k} \lambda_k \mu_k +\widetilde{c}_2 \rho_1^{2p_k} \lambda_k^2 \mu_k^2\right)}{\rho_1^{4p_k} \lambda_k^4 \mu_k^4} \rightarrow 0,~~ \mathrm{as}~ p_k \rightarrow +\infty.
\end{split}
\]
It can be deduced from this and $\psi(x_k)\leq D_k$ that $\widehat{r}_k\geq r_k \geq\upsilon_1$, for sufficiently large $p_k$,
proving the first part of Assertion (iii).

In the second part of Assertion (iii), we provide upper bounds for $\widehat{\mu}_k$ and $p_k$. 
Let us denote by $\overline{d}_k$ the solution of the system 
(\ref{eq:LM_direction}) corresponding to the parameter $\overline{\mu}_k:=\rho_1^{p_k-1}\lambda_k \mu_k$ and set 
$\overline{x}_{k+1}=x_k+\overline{d}_k$. By \eqref{eq:dkInequality} and \eqref{eq:boundj},
we get
\begin{equation}
\|\overline{d}_k\|\leq \overline{\mu}_k^{-1}\|\nabla\psi(x_{k})\|
\leq\overline{\mu}_k^{-1}\|\nabla h(x_{k})\|\|h(x_{k})\|
\leq\overline{\mu}_k^{-1}L_{0}\|h(x_{0})\|
\leq\mu_{min}^{-1}L_{0}\|h(x_{0})\|.\label{eq:dkbound1}
\end{equation}
It follows from this and the triangle inequality that
\begin{equation}
\|h(x_{k})+\nabla h(x_{k})^{T}\overline{d}_k\|\leq\|h(x_{k})\|+\|\nabla h(x_{k})\|\|\overline{d}_k\|
\leq(1+ L_{0}^{2}\mu_{min}^{-1})\|h(x_{0})\|.\label{eq:inequalityh1}
\end{equation}
For all $t\in [0,1]$, (A3) and~\eqref{eq:dkbound1} imply
\begin{equation}
\|\nabla h(x_{k}+t \overline{d}_k)-\nabla h(x_{k})\|\leq L\|\overline{d}_k\|\leq L L_{0} \mu_{min}^{-1}\|h(x_{0})\|.\label{eq:inequalityn1}
\end{equation}
From \eqref{eq:boundj}, (\ref{first_Part}), (\ref{eq:ActRedIneq0}), and (\ref{eq:descentDirection}), we obtain
\[
\begin{split}
\psi(x_{k}+\overline{d}_k) &\leq  \psi(x_{k})+\nabla\psi(x_{k})^{T}\overline{d}_k\\
&\hspace{4mm}+\left(\frac{1}{2}L_0^2+\frac{1}{2} L^2 L_{0}^2 \mu_{min}^{-2}\|h(x_{0})\|^2 + (1+ L_{0}^{2}\mu_{min}^{-1})L\|h(x_{0})\| \right)\|\overline{d}_k\|^{2}.
\end{split}
\]
Following $\psi(x_k) \leq D_k$ and $\nabla \psi(x_k)^T \overline{d}_k = -\overline{d}_k^T H_k \overline{d}_k \leq -\overline{\mu}_k \|\overline{d}_k\|^2$, it can be deduced
\begin{equation} \label{eq:psiUpperBound}
\begin{split}
\psi(x_{k}+\overline{d}_k) 
&\leq D_k-\overline{\mu}_k \|\overline{d}_k\|^2\\
&\hspace{4mm}+\left(\frac{1}{2}L_0^2+\frac{1}{2} L^2 L_{0}^2\mu_{min}^{-2} \|h(x_{0})\|^2 + (1+ L_{0}^{2}\mu_{min}^{-1})L\|h(x_{0})\| \right)\|\overline{d}_k\|^{2}.
\end{split}
\end{equation}
It follows from \eqref{eq:pred2} and the definition $\overline{\mu}_k$ that $\widehat{r}_k<\upsilon_1$ and
\[
D_k-\psi(x_k+\overline{d}_k) < \upsilon_1(q_k(0)-q_k(\overline{d}_k)) \leq \upsilon_1 \left( \frac{1}{2} L_0^2+\overline{\mu}_k\right) \|\overline{d}_k\|^{2}.
\] 
Combining this inequality with that in (\ref{eq:psiUpperBound}) suggest
\[
  \left(\overline{\mu}_k-\frac{1}{2}L_0^2-\frac{1}{2} L^2 L_{0}^2\mu_{min}^{-2} \|h(x_{0})\|^2 
 -(1+ L_{0}^{2}\mu_{min}^{-1})L\|h(x_{0})\| \right)\|\overline{d}_k\|^{2} 
 \leq \upsilon_1 \left( \frac{1}{2} L_0^2+\overline{\mu}_k\right) \|\overline{d}_k\|^{2},
\]
leading to
\[
 \widehat{\mu}_k =\rho_1 \overline{\mu}_k \leq \frac{\rho_1}{1-\upsilon_1}\left(\frac{1}{2}(1+\upsilon_1)L_0^2+\frac{1}{2} L^2 L_{0}^2\mu_{min}^{-2} \|h(x_{0})\|^2 
 + (1+ L_{0}^{2}\mu_{min}^{-1})L\|h(x_{0})\| \right),
 \]
giving (\ref{eq:boundmukh}). Since $\widehat{\mu}_k= \rho_1^{p_k} \lambda_k \mu_k$, taking the logarithm from both sides
of
\[
\tau\geq \rho_1^{p_k} \lambda_k \mu_k \geq \rho_1^{p_k} \mu_{min}, 
\]
implies (\ref{eq:boundlk}), completing the proof.
\end{proof}

We now draw your attention to the global convergence of the sequence $\left\{ x_{k}\right\}$ 
generated by LMTR to a first-order stationary point $x^*$ of $\psi$ satisfying $\nabla \psi(x^*)=0$.
Let us first recall the following result for local convergence of the Levenberg\textendash Marquardt method given in
\cite{Ahookhosh_2017}.

\begin{thm}
\label{thm:global_convergence} Let $\left\{ x_{k}\right\} $ be the
sequence generated by LMTR and (A1)-(A3) hold.
Then, $\left\{ D_{k}\right\} $ is convergent and
\begin{equation}
\lim_{k\rightarrow\infty}D_{k}=\lim_{k\rightarrow\infty}\psi(x_{k}).\label{eq:limdk1}
\end{equation}
Further, the algorithm either stops at finite number of iterations, satisfying
$\|h(x_{k})\|\leq\varepsilon$ or $\nabla\psi(x_{k})\leq\varepsilon$,
or generates an infinite sequence $\left\{ x_{k}\right\} $ such
that any accumulation point of this sequence is a stationary point
of the merit function $\psi$, i.e.,
\begin{equation}
\lim_{k\rightarrow\infty}\|\nabla\psi(x_{k})\|=0.\label{eq:limngrad1}
\end{equation}
\end{thm}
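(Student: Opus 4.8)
The plan is to run the same scheme as in the proof of Theorem~\ref{thm:suffdecrease}, with the trust-region acceptance test $\widehat{r}_k\ge\upsilon_1$ taking over the role of the Armijo condition. First I would establish the convergence of $\{D_k\}$. At every outer iteration the accepted step satisfies $\widehat{r}_{k-1}\ge\upsilon_1>0$ while the predicted reduction $q_{k-1}(0)-q_{k-1}(d_{k-1})$ is nonnegative by Proposition~\ref{prop:boundpk}(i); hence $\psi(x_k)\le D_{k-1}$, and then, as in \eqref{eq:DkDk1}, $D_k\le D_{k-1}$ and $\psi(x_k)\le D_k$. So $\{D_k\}$ is nonincreasing and bounded below by $0$, hence convergent. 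Passing to the limit in $D_k-D_{k-1}=(1-\theta_{k-1})(\psi(x_k)-D_{k-1})$ and using $1-\theta_{k-1}\ge 1-\theta_{\max}>0$ forces $\psi(x_k)-D_{k-1}\to 0$, which is exactly \eqref{eq:limdk1}.

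Next I would derive a uniform sufficient-decrease inequality for the outer iterations. Assume the algorithm does not stop after finitely many iterations; by Proposition~\ref{prop:boundpk}(iii) the inner loop always terminates, so the accepted direction $d_k$ obeys $\widehat{r}_k\ge\upsilon_1$, which by the definition \eqref{eq:rkh} of $\widehat{r}_k$ and Proposition~\ref{prop:boundpk}(i) reads
\[
D_k-\psi(x_{k+1})\;\ge\;\upsilon_1\bigl(q_k(0)-q_k(d_k)\bigr)\;\ge\;\frac{\upsilon_1}{2(L_0^2+\mu_{\min})}\,\|\nabla\psi(x_k)\|^2 .
\]
Combining this with the identity $D_k-D_{k+1}=(1-\theta_k)\bigl(D_k-\psi(x_{k+1})\bigr)$ and $1-\theta_k\ge 1-\theta_{\max}$ produces a constant $c>0$, namely $c=\upsilon_1(1-\theta_{\max})/(2(L_0^2+\mu_{\min}))$, with $D_k-D_{k+1}\ge c\,\|\nabla\psi(x_k)\|^2$ for all $k$.

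Telescoping this over $k=0,\dots,N-1$ gives $D_0-D_N\ge c\sum_{k=0}^{N-1}\|\nabla\psi(x_k)\|^2$, and since $\{D_k\}$ converges the right-hand side stays bounded as $N\to\infty$; thus $\sum_{k\ge 0}\|\nabla\psi(x_k)\|^2<\infty$, so $\|\nabla\psi(x_k)\|\to 0$, which is \eqref{eq:limngrad1}. Together with $x_k\in\mathcal{L}(x_0)$, the boundedness of $\mathcal{L}(x_0)$ from (A2), and continuity of $\nabla\psi$, this shows every accumulation point of $\{x_k\}$ is a stationary point of $\psi$. The complementary case is immediate: if LMTR halts after finitely many iterations, the while-loop condition was violated, i.e.\ $\|h(x_k)\|\le\varepsilon$ or $\|\nabla\psi(x_k)\|\le\varepsilon$.

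I expect the only delicate point to be justifying the chained inequality in the second step: one has to be sure the bound on the predicted reduction in Proposition~\ref{prop:boundpk}(i) is uniform in $k$ (it is, since it depends only on $L_0$, $\mu_{\min}$, and the uniform estimate $\|\nabla h(x_k)\|\le L_0$ from \eqref{eq:boundj}, all valid because $x_k\in\mathcal{L}(x_0)$), and that the acceptance inequality $D_k-\psi(x_{k+1})\ge\upsilon_1(q_k(0)-q_k(d_k))$ is the one holding \emph{on exit} of the inner loop, which it is by the exit condition $\widehat{r}_k\ge\upsilon_1$. All remaining manipulations are bookkeeping identical to the line-search analysis of Theorem~\ref{thm:suffdecrease}.
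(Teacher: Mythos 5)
Your proposal is correct and follows essentially the same route as the paper: monotonicity and convergence of $\{D_k\}$ via the acceptance test $\widehat{r}_k\ge\upsilon_1$ and \eqref{eq:DkDk1}, then the limit \eqref{eq:limdk1}, and finally the sufficient-decrease bound from Proposition~\ref{prop:boundpk}(i). The only cosmetic difference is the last step, where you telescope $D_k-D_{k+1}\ge c\|\nabla\psi(x_k)\|^2$ to get summability, while the paper deduces $\|\nabla\psi(x_k)\|\to 0$ directly from $D_k-\psi(x_{k+1})\to 0$; both are equivalent bookkeeping.
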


\begin{proof}
From \eqref{eq:DkDk1} and \eqref{eq:xkLx0}, we have $D_k \leq D_{k-1}$ and $x_k \in \mathcal{L}(x_0)$. 
Hence, the sequence $\{D_{k}\}$ is decreasing and bounded below, i.e.,
it is convergent. From $\theta_{k}\in[\theta_{\min},\theta_{\max}]$, 
with $\theta_{\max}\in[\theta_{\min},1[$,
we obtain $1-\theta_{k}\geq1-\theta_{\max}>0$. Taking limits when $k$
goes to infinity from $D_k\leq \psi(x_k)\leq D_{k+1}$ gives \eqref{eq:limdk1}. 

If the algorithm stops in a finite number of iterations by either
$\|h(x_{k})\|\leq\varepsilon$ or $\nabla\psi(x_{k})\leq\varepsilon$,
the result is valid. If the algorithm generates the infinite sequence $\left\{ x_{k}\right\} $,
Proposition \ref{prop:boundpk} (i) yields 
\[
D_k-\psi(x_{k+1}) \geq \upsilon_1 (q(0)-q(d_k)) \geq
\frac{\upsilon_1}{2(L_0^2+\mu_{min})} \|\nabla \psi(x_k)\|^2 \geq 0.
\]
From this and \eqref{eq:limdk1}, we obtain
\[
\lim_{k\rightarrow\infty}\|\nabla\psi(x_{k})\|=\lim_{k\rightarrow\infty}\|\nabla h(x_{k})h(x_{k})\|=0,
\]
i.e., any accumulation point of $\left\{ x_{k}\right\} $ is a stationary
point of $\psi$.
\end{proof}

Let us continue this section by providing global and evaluation complexities of the sequence 
$\left\{ x_{k}\right\}$ generated by LMTR using the results presented in Proposition \ref{prop:boundpk}.

\begin{thm}
\label{thm:complexity1} Let $\left\{ x_{k}\right\} $ be the sequence
generated by LMTR and (A1)-(A3) hold. Then,
\begin{description}
\item [{(i)}] the total number of iterations to guarantee $\|\nabla\psi(x_{k})\|\leq\varepsilon$
is bounded above by
\begin{equation}
N_{i}(\varepsilon)\leq\lceil \overline{c}_3^{-1}\psi(x_{0})\varepsilon^{-2}+1\rceil,\label{eq:nieps1}
\end{equation}
where $\overline{c}_3:=\upsilon_1 (1-\eta_{\max})/(2(L_0^2+\mu_{min}))$;
\item [{(ii)}] the total number of function evaluations to guarantee $\|\nabla\psi(x_{k})\|\leq\varepsilon$
is bounded above by
\begin{equation} \label{eq:nfeps1}
N_{f}(\varepsilon)\leq\lceil \overline{c}_3^{-1}\psi(x_{0})\varepsilon^{-2}+1\rceil
\left(\frac{\log(\tau)-\log(\mu_{min}))}{\log(\rho_1)}\right).
\end{equation}
\end{description}
\end{thm}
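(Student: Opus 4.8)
The plan is to replay the argument of Theorem~\ref{thm:complexity}, with Proposition~\ref{prop:boundpk} now playing the role that Proposition~\ref{eq:boundLk} and the descent estimate of Theorem~\ref{thm:suffdecrease} played for LMLS. Two ingredients are needed: (a) a uniform per-iteration decrease of the nonmonotone term $D_k$ that is proportional to $\|\nabla\psi(x_k)\|^2$, supplied by Proposition~\ref{prop:boundpk}(i); and (b) a uniform bound on the number of merit-function evaluations spent in the inner loop of LMTR at each outer iteration, supplied by Proposition~\ref{prop:boundpk}(iii). Everything else is telescoping plus a contradiction argument, just as in Theorem~\ref{thm:complexity}.

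For assertion~(i) I would first note that any accepted step has $\widehat r_k\ge\upsilon_1$, so by Proposition~\ref{prop:boundpk}(i),
\[
D_k-\psi(x_{k+1})\ \ge\ \upsilon_1\big(q_k(0)-q_k(d_k)\big)\ \ge\ \frac{\upsilon_1}{2(L_0^2+\mu_{min})}\,\|\nabla\psi(x_k)\|^2 .
\]
Combining this with the update identity $D_{k}-D_{k+1}=(1-\theta_{k})(D_{k}-\psi(x_{k+1}))$ coming from~\eqref{eq:dk} and with $1-\theta_k\ge 1-\theta_{\max}>0$ yields $D_k-D_{k+1}\ge \overline c_3\,\|\nabla\psi(x_k)\|^2$, where $\overline c_3$ is exactly the constant in the statement (the factor $1-\theta_{\max}$ is the price of the nonmonotone averaging, mirroring $c_2$ in Theorem~\ref{thm:complexity}). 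Then, arguing by contradiction, suppose LMTR has not stopped after $\widehat k:=\lceil \overline c_3^{-1}\psi(x_0)\varepsilon^{-2}+1\rceil$ iterations, so that $\|\nabla\psi(x_i)\|>\varepsilon$ for $i=0,\dots,\widehat k-1$; telescoping and using $D_0=\psi(x_0)$ together with $D_{\widehat k}\ge 0$ (each $D_k$ is a convex combination of the nonnegative values $\psi(x_j)$) gives
\[
\psi(x_0)\ \ge\ D_0-D_{\widehat k}\ =\ \sum_{i=0}^{\widehat k-1}(D_i-D_{i+1})\ \ge\ \overline c_3\sum_{i=0}^{\widehat k-1}\|\nabla\psi(x_i)\|^2\ >\ \overline c_3\varepsilon^2\widehat k,
\]
which contradicts $\overline c_3\varepsilon^2\widehat k>\psi(x_0)$ and establishes~\eqref{eq:nieps1}.

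For assertion~(ii), at outer iteration $k$ the inner loop (Lines~5--7 of LMTR) performs $p_k$ additional direction computations, each requiring one evaluation of $\psi(x_k+d_k)$ to form $\widehat r_k$; by Proposition~\ref{prop:boundpk}(iii) this count is bounded uniformly by $(\log\tau-\log\mu_{min})/\log\rho_1$, a positive quantity since $\rho_1>1$. Summing over the at most $N_i(\varepsilon)$ outer iterations from~(i) gives
\[
N_f(\varepsilon)\ \le\ \sum_{k=0}^{N_i(\varepsilon)-1}p_k\ \le\ \Big\lceil \overline c_3^{-1}\psi(x_0)\varepsilon^{-2}+1\Big\rceil\,\frac{\log(\tau)-\log(\mu_{min})}{\log(\rho_1)},
\]
which is~\eqref{eq:nfeps1}. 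The proof is essentially bookkeeping once Proposition~\ref{prop:boundpk} is available; the only points that need a little care — and where I would slow down — are making the nonmonotone constant $\overline c_3$ come out exactly as stated, checking $D_{\widehat k}\ge 0$ so the telescoped bound is non-vacuous, and being explicit about which evaluations of $\psi$ are charged to $p_k$. No genuinely new estimate beyond Proposition~\ref{prop:boundpk} is required, so there is no serious obstacle here.
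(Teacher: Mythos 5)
Your proposal is correct and follows essentially the same route as the paper: Proposition~\ref{prop:boundpk}(i) combined with the acceptance test $\widehat r_k\ge\upsilon_1$ and the identity $D_k-D_{k+1}=(1-\theta_k)(D_k-\psi(x_{k+1}))$ gives the per-iteration decrease $\overline c_3\|\nabla\psi(x_k)\|^2$, followed by the same telescoping contradiction for~(i) and the same summation of the $p_k$ bound from Proposition~\ref{prop:boundpk}(iii) for~(ii). Your identification of the constant's factor as $1-\theta_{\max}$ (the paper writes $\eta_{\max}$, evidently a typo) and your explicit check that $D_{\widehat k}\ge 0$ are the only points where you are slightly more careful than the paper's own write-up.
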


\begin{proof}
To prove Assertion (i), we first define
\[
\widetilde{k}:=\lceil \overline{c}_3^{-1}\psi(x_{0})\varepsilon^{-2}+1\rceil,
\]
which is equivalent to
\begin{equation}\label{eq:ineqkover}
\overline{c}_3\varepsilon^{2}\widetilde{k}=\overline{c}_3\varepsilon^{2}\lceil \overline{c}_3^{-1}\psi(x_{0})\varepsilon^{-2}+1\rceil>\psi(x_{0}).
\end{equation}
Let us assume by contradiction that $N_{i}(\varepsilon)>\widetilde{k}$,
which means that LMTR does not stop in $\widetilde{k}$ iterations.
For a successful iteration $k$ of LMTR, it follows from \eqref{eq:Dk_equality} 
and Proposition~\ref{prop:boundpk} (i) that
\[
\begin{split}D_{k}-D_{k+1} & =(1-\theta_{k})(D_{k}-\psi(x_{k+1}))\\
&\geq \upsilon_1 (1-\theta_{k}) (q(0)-q(d_k))\\
&\geq  \frac{\upsilon_1 (1-\eta_{\max})}{2(L_0^2+\mu_{min})} \|\nabla\psi(x_{k})\|^2
= \overline{c}_3 \|\nabla\psi(x_{k})\|^2,
\end{split}
\]
leading to
\[
\psi(x_{0})=D_{0}\geq D_{0}-D_{\widetilde{k}}=\sum_{i=0}^{\widetilde{k}-1}(D_{i}-D_{i+1})\geq \overline{c}_3\sum_{i=0}^{\widetilde{k}-1}\|\nabla\psi(x_{i})\|^{2}>\overline{c}_3\varepsilon^{2}\widetilde{k},
\]
which contradicts to~\eqref{eq:ineqkover}, proving Assertion (i).

From (\ref{eq:boundlk}) and (\ref{eq:nieps1}), we obtain 
\[
\begin{split}
N_{f}(\varepsilon)&\leq\sum_{k=0}^{N_{i}(\varepsilon)-1} p_k
\leq\sum_{k=0}^{N_{i}(\varepsilon)-1} \frac{\log(\tau)-\log(\mu_{min}))}{\log(\rho_1)}\\
&\leq \lceil \overline{c}_3^{-1}\psi(x_{0})\varepsilon^{-2}+1\rceil
\left(\frac{\log(\tau)-\log(\mu_{min}))}{\log(\rho_1)}\right),
\end{split}
\]
giving (\ref{eq:nfeps1}).
\end{proof}

We conclude this section by providing the local convergence rate of LMTR if the corresponding 
sequence $\{x_k\}$ is convergent to a solution of \eqref{eq:nonequa} under the \
L{}ojasiewicz gradient inequality (see \cite{lojasiewicz1963,lojasiewicz1965ensembles}). To this end, the presence
of the subsequent two lemmas are necessary in our local analysis of LMTR.

\begin{lem}\label{lem:rate_convergence} 
\cite[Lemma~1]{artacho_accelerating_2015} Let~$\left\{ s_{k}\right\} $ be a sequence
in~$\mathbb{R}_{+}$ and let~$\zeta,\nu$ be some nonnegative constants.
Suppose that $s_{k}\to0$ and that the sequence satisfies
\begin{equation}\label{eq:ineq_seq}
s_{k}^{\zeta}\leq \nu(s_{k}-s_{k+1}),
\end{equation}
for all $k$ sufficiently large. Then
\begin{enumerate}
\item if~$\zeta=0$, the sequence~$\left\{ s_{k}\right\} $ converges
to~$0$ in a finite number of steps;
\item if~$\zeta\in\left ]0,1\right]$, the sequence~$\left\{ s_{k}\right\} $
converges linearly to~$0$ with rate~$1-\frac{1}{\nu}$;
\item if~$\zeta>1$, there exists~$\varsigma>0$ such that, for all $k$ sufficiently large, 
\[
s_{k}\leq\varsigma k^{-\frac{1}{\zeta-1}}.
\]
\end{enumerate}
\end{lem}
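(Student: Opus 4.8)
The plan is to handle the three cases separately, in each case reducing the inequality $s_k^\zeta\le\nu(s_k-s_{k+1})$ to a one-step recursion in $s_k$ that can be either solved directly or telescoped. A preliminary observation used in all three cases: the hypothesis forces $s_k-s_{k+1}\ge s_k^\zeta/\nu\ge 0$, so $\{s_k\}$ is eventually nonincreasing; moreover, if $s_k=0$ for some large $k$ then the sequence stays at $0$ and the conclusion is immediate, so one may assume $s_k>0$ for all sufficiently large $k$.

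For $\zeta=0$ the hypothesis reads $1\le\nu(s_k-s_{k+1})$, so $s_k$ decreases by at least $1/\nu>0$ at every step; since $s_k\to0$ and $s_k\ge0$ this is impossible unless $s_k$ reaches $0$ after finitely many steps, giving item~1. For $\zeta\in\left]0,1\right]$, use that $s_k\to0$ to get $s_k\le1$ for $k$ large, hence $s_k\le s_k^\zeta$; combined with the hypothesis this yields $s_k\le\nu(s_k-s_{k+1})$, i.e. $s_{k+1}\le(1-\tfrac1\nu)s_k$, and iterating gives linear convergence to $0$ with rate $1-\tfrac1\nu$, which is item~2 (in the degenerate subcase $\nu<1$ the bound forces finite termination, consistent with the stated rate).

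The main case is $\zeta>1$. Here I would introduce $\phi(t):=t^{1-\zeta}$ on $]0,\infty[$; since $\zeta>1$, $\phi$ is convex and decreasing, so $\phi(s_{k+1})-\phi(s_k)\ge\phi'(s_k)(s_{k+1}-s_k)=(\zeta-1)s_k^{-\zeta}(s_k-s_{k+1})\ge(\zeta-1)s_k^{-\zeta}\cdot s_k^{\zeta}/\nu=(\zeta-1)/\nu$, where the last inequality is the hypothesis. Telescoping this per-step decrement from a suitable starting index $k_0$ gives $s_k^{1-\zeta}=\phi(s_k)\ge\phi(s_{k_0})+(k-k_0)(\zeta-1)/\nu\ge(k-k_0)(\zeta-1)/\nu$, and raising both sides to the power $-1/(\zeta-1)<0$ (which reverses the inequality) produces $s_k\le\big[(\zeta-1)(k-k_0)/\nu\big]^{-1/(\zeta-1)}$; absorbing the shift $k_0$ into a constant $\varsigma$ (valid, say, for $k\ge2k_0$) yields item~3. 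The convexity step can equivalently be carried out via the mean value theorem applied to $\phi$ on $[s_{k+1},s_k]$ together with the monotonicity bound $\xi^{-\zeta}\ge s_k^{-\zeta}$ for $\xi\le s_k$.

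The point requiring care is precisely this last case: one must ensure $\phi$ and $\phi'$ are only evaluated at strictly positive arguments (handled by the a priori reduction to $s_k>0$), and one must convert the statement "the inequality holds for all $k$ sufficiently large" into a clean bound of the form $\varsigma k^{-1/(\zeta-1)}$ valid for all large $k$, which costs nothing more than enlarging the constant $\varsigma$. Everything else is routine.
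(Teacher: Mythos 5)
Your proof is correct. Note, however, that the paper does not prove this lemma at all: it is imported by citation as \cite[Lemma~1]{artacho_accelerating_2015}, so there is no in-paper argument to compare against; your write-up in effect supplies the missing proof, and it follows the standard route used in that reference. The case $\zeta=0$ by a uniform decrement of at least $1/\nu$, the case $\zeta\in\left]0,1\right]$ by observing $s_{k}\leq s_{k}^{\zeta}$ once $s_{k}\leq1$ and reading off the one-step contraction $s_{k+1}\leq(1-\tfrac{1}{\nu})s_{k}$, and the case $\zeta>1$ via the convexity (equivalently, mean value) bound for $t\mapsto t^{1-\zeta}$, which turns the hypothesis into a per-step increase of $s_{k}^{1-\zeta}$ by at least $(\zeta-1)/\nu$ and then telescopes to $s_{k}\leq\varsigma k^{-1/(\zeta-1)}$, is exactly the intended argument. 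The two points that genuinely need care---evaluating $t^{1-\zeta}$ and its derivative only at strictly positive arguments, and absorbing the starting index $k_{0}$ into the constant $\varsigma$---are the ones you identify and handle correctly. The remaining caveats (the rate $1-\tfrac{1}{\nu}$ is only meaningful when $\nu\geq1$, and $\nu>0$ is implicitly assumed throughout) are imprecisions of the lemma's statement itself, not gaps in your proof.
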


\begin{lem}\label{Svaiter_lemma}
\cite[Theorem 2.5 and Lemma 2.3]{karas2015algebraic} The sequence $\{x_{k}\}$ generated by~LMTR with $p=0$ satisfies
$$ \|d_{k}\|\leq\frac{1}{2\sqrt{\widehat{\mu}_{k}}}\|h(x_{k})\|,$$
and
\begin{equation}\label{h(x_k)_estimate}
\|h(x_{k+1})\|^{2}\leq\|h(x_{k})\|^{2}\\
+d_{k}^{T}\nabla h(x_{k})h(x_{k})+\|d_{k}\|^{2}\left[\frac{L^{2}}{4}\|d_{k}\|^{2}+L\|h(x_{k})\|-\widehat{\mu}_{k}\right].
\end{equation}
\end{lem}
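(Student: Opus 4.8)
The plan is to establish the two estimates separately; both rest only on the normal equations \eqref{eq:LM_direction} (with $\mu_k$ replaced by the trust-region parameter $\widehat{\mu}_k$) and the Lipschitz bound (A3). Throughout write $J_k:=\nabla h(x_k)$ and recall that, for $p=0$, the accepted direction $d_k$ solves $(J_kJ_k^T+\widehat{\mu}_kI)d_k=-J_kh(x_k)$, equivalently $d_k$ minimises $d\mapsto\|J_k^Td+h(x_k)\|^2+\widehat{\mu}_k\|d\|^2$ as in \eqref{eq:def_phi_k}.

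For the bound on $\|d_k\|$ I would diagonalise via a singular value decomposition $J_k^T=U\Sigma V^T$, so that $d_k=-V(\Sigma^T\Sigma+\widehat{\mu}_kI)^{-1}\Sigma^TU^Th(x_k)$; since $U,V$ are orthogonal it suffices to bound the diagonal entries $\sigma_i/(\sigma_i^2+\widehat{\mu}_k)$ of $(\Sigma^T\Sigma+\widehat{\mu}_kI)^{-1}\Sigma^T$. The arithmetic--geometric-mean inequality $\sigma_i^2+\widehat{\mu}_k\ge 2\sigma_i\sqrt{\widehat{\mu}_k}$ gives $\sigma_i/(\sigma_i^2+\widehat{\mu}_k)\le 1/(2\sqrt{\widehat{\mu}_k})$, hence $\|d_k\|\le (2\sqrt{\widehat{\mu}_k})^{-1}\|h(x_k)\|$.

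For the decrease estimate I would first record two consequences of the normal equations. Multiplying $(J_kJ_k^T+\widehat{\mu}_kI)d_k=-J_kh(x_k)$ on the left by $d_k^T$ and expanding the square yields the identity
\[
\|J_k^Td_k+h(x_k)\|^2=\|h(x_k)\|^2+d_k^TJ_kh(x_k)-\widehat{\mu}_k\|d_k\|^2,
\]
while the optimality of $d_k$ (comparing with $d=0$) gives $\|J_k^Td_k+h(x_k)\|\le\|h(x_k)\|$. Next, the first-order Taylor expansion with integral remainder reads $h(x_{k+1})=h(x_k)+J_k^Td_k+\int_0^1(\nabla h(x_k+td_k)-J_k)^Td_k\,dt$, and (A3) bounds the remainder in norm by $\tfrac{L}{2}\|d_k\|^2$. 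Applying the triangle inequality, squaring, and using $\|J_k^Td_k+h(x_k)\|\le\|h(x_k)\|$ in the cross term gives
\[
\|h(x_{k+1})\|^2\le\|J_k^Td_k+h(x_k)\|^2+L\|h(x_k)\|\,\|d_k\|^2+\tfrac{L^2}{4}\|d_k\|^4 .
\]
Substituting the displayed identity for $\|J_k^Td_k+h(x_k)\|^2$ and grouping the $\|d_k\|^2$ terms produces exactly \eqref{h(x_k)_estimate}.

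The only steps that go beyond bookkeeping are the Taylor remainder estimate (routine, via (A3)) and the handling of the cross term $2\langle J_k^Td_k+h(x_k),\ \text{remainder}\rangle$: routing it through the triangle inequality together with the optimality bound $\|J_k^Td_k+h(x_k)\|\le\|h(x_k)\|$, rather than a bare Cauchy--Schwarz step, is what keeps the final coefficients clean. I would also double-check the transpose and dimension bookkeeping, since the paper uses the convention $\nabla h(x_k)\in\mathbb{R}^{m\times n}$.
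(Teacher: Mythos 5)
Your proposal is correct: both estimates follow exactly as you describe, and I checked the details (the SVD/AM--GM bound $\sigma_i/(\sigma_i^2+\widehat{\mu}_k)\leq 1/(2\sqrt{\widehat{\mu}_k})$ for the step length, the identity $\|\nabla h(x_k)^Td_k+h(x_k)\|^2=\|h(x_k)\|^2+d_k^T\nabla h(x_k)h(x_k)-\widehat{\mu}_k\|d_k\|^2$ obtained from the normal equations, the comparison with $d=0$, and the Taylor remainder bound $\tfrac{L}{2}\|d_k\|^2$), all with the paper's convention $\nabla h(x_k)\in\mathbb{R}^{m\times n}$. Note, however, that the paper does not prove this lemma at all; it is imported by citation from Karas, Santos and Svaiter \cite[Theorem 2.5 and Lemma 2.3]{karas2015algebraic}, and your argument is essentially a self-contained reconstruction of the proofs given there, so there is no genuine divergence of method -- only the added value that your write-up makes the statement independent of the external reference, using nothing beyond the normal equations for $d_k$ with parameter $\widehat{\mu}_k$ and assumption (A3).
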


Let us describe  now the \emph{\L{}ojasiewicz gradient inequality} in the following definition.
\begin{defn}
Let $\psi:U\to\mathbb{R}$ be a  function defined on an 
open set $U\subseteq\mathbb{R}^{m}$, and assume that the set of zeros
$\Omega:=\left\{ x\in\mathbb{R}^{m},\,\psi(x)=0\right\} $ is nonempty. 
The function $\psi$ is said to satisfy the \L{}ojasiewicz gradient 
inequality if for any critical point~$\overline{x}$, there exist constants  
$\kappa>0,\varepsilon>0$ and $\theta \in[0,1[$  such that
\begin{equation}\label{eq:Lojasiewicz_Gradient_Inequality}
|\psi(x)-\psi(\overline{x})|^{\theta}\leq \kappa \|\nabla \psi(x)\|,\quad\forall x\in\mathbb{B}(\overline{x},\varepsilon).
\end{equation}
\end{defn} 
This inequality is valid for a large class of functions such as analytic, subanalytic, 
and semialgebraic functions, cf. \cite{lojasiewicz1963,lojasiewicz1965ensembles,kurdyka_1998}. 
See Section \ref{sec.numapp} for a mapping with a real analytic merit function, where finding 
zeros of this mapping is the main motivation of this study. Here, we further assume that

\begin{description}
\item [(A4)] the merit function $\psi$ satisfies the \L{}ojasiewicz gradient inequality \eqref{eq:Lojasiewicz_Gradient_Inequality}.
\end{description}

The next theorem is the third main result of this section, which provides the convergence of the sequences 
$\left\{ \mathrm{dist}(x_{k},\Omega)\right\} $ and $\left\{ \psi(x_k)\right\} $ 
to $0$ if an accumulation point $x^*$ of $\{x_k\}$ is a solution of the 
nonlinear system \eqref{eq:nonequa}.

\begin{thm}
\label{thm:convergence_of_dist(x,Omega)_general} Suppose that (A4) holds and assume that the sequence~$\{x_k\}$ generated by~LMTR is convergent to a solution $x^*$ of the nonlinear system \eqref{eq:nonequa}. Then, 
\begin{enumerate}
\item for sufficiently large $k$, it holds $x_{k+1}=x_k+d_k$; 
\item there exist constants $s>0$, $\overline{s}>0$, and $k'\in\mathbb{N}$ such that, for $x_{k'}\in\mathbb{B}(x^{*},s)$, 
\[
\{x_k\}_{k\geq k'}\subset\mathbb{B}(x^*,\overline{s}), \quad \{\psi(x_{k})\}\rightarrow 0, \quad \{\mathrm{dist}(x_{k},\Omega)\}, \quad
\mathrm{as}~ k\rightarrow \infty;
\]
\item if $\theta=0$, the sequences~$\{\psi(x_{k})\}$ and $\{\mathrm{dist}(x_{k},\Omega)\}$ converge to~$0$  in a finite number of steps;
\item if $\theta\in\left ]0,\frac{1}{2}\right]$, the sequences~$\{\psi(x_{k})\}$ and $\{\mathrm{dist}(x_{k},\Omega)\}$ converge linearly to~$0$;
\item if $\theta \in\left ]\frac{1}{2},1\right[$, there exist some positive constants $\varsigma_1$ and $\varsigma_2$ such that, for all large~$k$,
\begin{gather*}
\psi(x_{k})\leq \varsigma_1 k^{-\frac{1}{2\theta-1}}\quad\text{and}\quad
\mathrm{dist}(x_{k},\Omega)\leq \varsigma_2 k^{-\frac{\delta}{2(2\theta-1)}}.
\end{gather*}
\end{enumerate}
\end{thm}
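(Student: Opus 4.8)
The plan is to combine the \L{}ojasiewicz gradient inequality (A4) with the sufficient decrease and direction estimates already established for LMTR, and then invoke Lemma~\ref{lem:rate_convergence} to extract the convergence rates. First I would prove Assertion (1): since $\{x_k\}\to x^*$ and, by Theorem~\ref{thm:global_convergence}, $\|h(x_k)\|\to 0$ and $\|\nabla\psi(x_k)\|\to 0$, the Levenberg\textendash Marquardt parameter $\widehat{\mu}_k$ stays bounded away from $0$ (by $\mu_{min}$) but $\|d_k\|\to 0$; using the estimate $|r_k-1|\to 0$ type argument from the proof of Proposition~\ref{prop:boundpk}(iii) (with $p_k=0$, since $\widehat\mu_k\ge\mu_{min}$ already gives a large enough regularisation when $\|h(x_k)\|$ is tiny), one shows $\widehat r_k\to 1\ge\upsilon_1$, so for large $k$ the inner loop is not entered and $x_{k+1}=x_k+d_k$ with $p_k=0$. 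This legitimises the use of Lemma~\ref{Svaiter_lemma} from some index on.

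Next, for Assertion (2), I would set $s_k:=\psi(x_k)=\tfrac12\|h(x_k)\|^2$ (note $\psi(x^*)=0$ since $x^*\in\Omega$) and derive the recursion $s_k^{2\theta}\le\nu(s_k-s_{k+1})$ for large $k$. The two ingredients are: (a) the sufficient decrease $D_k-\psi(x_{k+1})\ge \overline c_3\|\nabla\psi(x_k)\|^2$ from Proposition~\ref{prop:boundpk}(i) together with $\psi(x_{k+1})\le D_{k+1}\le D_k$, which gives $\psi(x_k)-\psi(x_{k+1})\ge D_k-\psi(x_{k+1})\ge \overline c_3\|\nabla\psi(x_k)\|^2$ after handling the $D_k$ vs $\psi(x_k)$ gap via \eqref{eq:DkDk1}; and (b) the \L{}ojasiewicz inequality \eqref{eq:Lojasiewicz_Gradient_Inequality} at $\overline x=x^*$, i.e. $\psi(x_k)^{\theta}=|\psi(x_k)-\psi(x^*)|^{\theta}\le\kappa\|\nabla\psi(x_k)\|$ for $x_k$ in a ball around $x^*$. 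Combining (a) and (b) yields $\psi(x_k)^{2\theta}\le\kappa^2\|\nabla\psi(x_k)\|^2\le\kappa^2\overline c_3^{-1}(\psi(x_k)-\psi(x_{k+1}))$, i.e. \eqref{eq:ineq_seq} with $\zeta=2\theta$ and $\nu=\kappa^2\overline c_3^{-1}$. To make this rigorous one must also establish that the whole tail $\{x_k\}_{k\ge k'}$ stays inside $\mathbb{B}(x^*,\varepsilon)$ — this is the standard "trapping" argument: using $\|d_k\|\le \tfrac{1}{2\sqrt{\widehat\mu_k}}\|h(x_k)\|$ from Lemma~\ref{Svaiter_lemma} and telescoping a bound on $\sum\|d_k\|$ controlled by $\psi(x_{k'})^{1-\theta}$ (a consequence of the recursion), one shows that if $x_{k'}$ starts in a small enough ball $\mathbb{B}(x^*,s)$ then all subsequent iterates remain in $\mathbb{B}(x^*,\overline s)\subseteq\mathbb{B}(x^*,\varepsilon)$, which both justifies applying the \L{}ojasiewicz inequality at every step and shows $\{x_k\}$ is Cauchy; convergence of $\psi(x_k)\to 0$ and hence, via (A1) \eqref{eq:errorBound}, $\mathrm{dist}(x_k,\Omega)\le\beta^{-1}\|h(x_k)\|^{\delta}=\beta^{-1}(2\psi(x_k))^{\delta/2}\to 0$, follows.

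With the recursion $s_k^{2\theta}\le\nu(s_k-s_{k+1})$ and $s_k\to 0$ in hand, Assertions (3)--(5) are immediate applications of Lemma~\ref{lem:rate_convergence} with $\zeta=2\theta$: if $\theta=0$ then $\zeta=0$ and $\{s_k\}=\{\psi(x_k)\}$ terminates finitely, and then $\mathrm{dist}(x_k,\Omega)\le\beta^{-1}(2\psi(x_k))^{\delta/2}$ does too; if $\theta\in]0,\tfrac12]$ then $\zeta=2\theta\in]0,1]$ gives linear convergence of $\psi(x_k)$, and the error bound transfers this (up to constants) to linear convergence of $\mathrm{dist}(x_k,\Omega)$; if $\theta\in]\tfrac12,1[$ then $\zeta=2\theta>1$ gives $\psi(x_k)\le\varsigma_1 k^{-1/(2\theta-1)}$, whence $\mathrm{dist}(x_k,\Omega)\le\beta^{-1}(2\psi(x_k))^{\delta/2}\le\varsigma_2 k^{-\delta/(2(2\theta-1))}$, absorbing constants into $\varsigma_2$.

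I expect the main obstacle to be Assertion (2), specifically the trapping/Cauchy argument that confines the tail of $\{x_k\}$ to the \L{}ojasiewicz ball $\mathbb{B}(x^*,\varepsilon)$: this requires carefully choosing the initial radius $s$ so that the telescoped sum $\sum_{k\ge k'}\|d_k\|\le C\,\psi(x_{k'})^{1-\theta}$ (obtained by bootstrapping the recursion before the rates are known, in the standard Absil--Mahony--Andrews / \L{}ojasiewicz style) is smaller than $\overline s - s$, while simultaneously ensuring $p_k=0$ throughout (Assertion (1)) so that Lemma~\ref{Svaiter_lemma}'s bound $\|d_k\|\le\tfrac{1}{2\sqrt{\widehat\mu_k}}\|h(x_k)\|$ is available. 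Everything after that is a mechanical invocation of Lemmas~\ref{lem:rate_convergence} and the H\"older error bound \eqref{eq:errorBound}.
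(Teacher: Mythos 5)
Your skeleton (prove $p_k=0$ eventually, derive a \L{}ojasiewicz-type recursion for $s_k=\psi(x_k)$, run the standard trapping induction, then apply Lemma~\ref{lem:rate_convergence} and transfer rates to $\mathrm{dist}(x_k,\Omega)$ through \eqref{eq:errorBound}) is the paper's skeleton, and your Assertion (1) argument and the rate-transfer for Assertions (3)--(5) are fine. However, there is a genuine gap in how you obtain the recursion in Assertion (2). You write $\psi(x_k)-\psi(x_{k+1})\ge D_k-\psi(x_{k+1})$, but this is equivalent to $\psi(x_k)\ge D_k$, which is the reverse of what \eqref{eq:DkDk1} provides ($\psi(x_k)\le D_k$). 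In the nonmonotone scheme the acceptance test only controls $D_k-\psi(x_{k+1})$; the value $\psi(x_{k+1})$ may exceed $\psi(x_k)$, so $s_k-s_{k+1}$ can be negative and the recursion $s_k^{2\theta}\le\nu(s_k-s_{k+1})$ cannot be extracted from Proposition~\ref{prop:boundpk}(i) plus the acceptance test, no matter how you "handle the gap" via \eqref{eq:DkDk1}. Running the argument on $D_k$ instead also fails, because the \L{}ojasiewicz inequality bounds $\psi(x_k)^{\theta}$, not $D_k^{\theta}$, and $\psi(x_k)\le D_k$ again points the wrong way.

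The paper closes exactly this hole with the second estimate of Lemma~\ref{Svaiter_lemma}: once Assertion (1) guarantees $p_k=0$, inequality \eqref{h(x_k)_estimate} gives \eqref{eq:conseq_Svaiter}, and after shrinking the neighbourhood so that $\mu_{\min}\ge\frac{2+\sqrt{5}}{4}L\|h(x)\|$ there (condition \eqref{eq:choose_s}), the bracketed term is nonpositive, yielding the genuinely monotone local descent $\psi(x_{k+1})\le\psi(x_k)-\frac{\mu_{\min}}{2}\|d_k\|^2$. Combined with $\psi(x_k)^{\theta}\le\kappa\|\nabla\psi(x_k)\|\le\kappa(L_0^2+\tau)\|d_k\|$ and the convexity of $t\mapsto-t^{1-\theta}$, this descent is what powers both the trapping induction (the telescoped bound on $\sum\|d_k\|$ by $\psi(x_{k'})^{1-\theta}$) and the final inequality $\psi(x_{k+1})\le\psi(x_k)-\frac{1}{2\kappa^2(L_0^2+\tau)}\psi(x_k)^{2\theta}$ to which Lemma~\ref{lem:rate_convergence} is applied with $\zeta=2\theta$. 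You invoke Lemma~\ref{Svaiter_lemma} only for the step bound $\|d_k\|\le\frac{1}{2\sqrt{\widehat{\mu}_k}}\|h(x_k)\|$, which by itself gives no descent of $\psi$; without \eqref{h(x_k)_estimate} and the neighbourhood condition \eqref{eq:choose_s}, your Assertion (2) (and hence (3)--(5)) does not go through.
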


\begin{proof}
Since $x^{*}$ is an accumulation point of $\left\{ x_{k}\right\} $ and  a solution of 
the nonlinear system~\eqref{eq:nonequa}, it can be deduced
\begin{equation}\label{eq:limnormdk1}
\lim_{k\rightarrow\infty} \|h(x_{k})\| = 0. 
\end{equation}
From this, Proposition \ref{prop:boundpk} (i), Proposition \ref{eq:boundLk} (ii),
(\ref{eq:ActRedIneq1}), $\widehat{\mu}_k \geq \mu_{min}$, and \eqref{eq:limnormdk1}, we obtain
\[
\begin{split}
\left\vert r_k-1 \right\vert &= \left\vert \frac{q_k(d_k)-\psi(x_k+d_k)}{q_k(0)-q_k(d_k)} \right\vert\\
& \leq 
\frac{2(L_0^2+\mu_{min})\left(\frac{1}{2} L^2 L_0^2 \|h(x_k)\|^2 \widehat{\mu}_k^{-2}+(1+L_0^2 \widehat{\mu}_k^{-1}) \|h(x_k)\| \right) \|d_k\|^2}{\|\nabla \psi(x_k)\|^2}\\ 
& \leq 
\frac{2(L_0^2+\mu_{min})\left(\frac{1}{2} L^2 L_0^2 \|h(x_k)\|^2 +(\widehat{\mu}_k^2+L_0^2 \widehat{\mu}_k) \|h(x_k)\| \right)}{\widehat{\mu}_k^4}\\
&\leq \frac{2(L_0^2+\mu_{min})\left(\frac{1}{2} L^2 L_0^2 \|h(x_k)\|^2 +(\mu_{min}^2+L_0^2 \mu_{min}) \|h(x_k)\| \right)}{\mu_{min}^4}
\rightarrow 0,~~ \mathrm{as}~ k \rightarrow +\infty, 
\end{split}
\]
which implies that there exists a $\overline{k}_0 \in \mathbb{N}$ such that $r_k\geq \upsilon_1$. Hence, for all $k \geq \overline{k}_0$,
it follows from $D_k\geq \psi(x_k)$ that
\[
\widehat{r}_k=\frac{D_k-\psi(x_k+d_k)}{q_k(0)-q_k(d_k)}\geq
\frac{\psi(x_k)-\psi(x_k+d_k)}{q_k(0)-q_k(d_k)}=r_k \geq \upsilon_1,
\] 
which means that $x_{k+1}=x_k+d_k$ with $p_k=0$ that justifies Assertion (i).

We divide the proof of Assertion (ii) into three parts. First, we will provide the values of $s$ and $\overline{s}$.
Let us set $\varepsilon>0$ and $\kappa>0$ such that~\eqref{eq:Lojasiewicz_Gradient_Inequality} holds and 
let $\overline{s}:=\min\{\mathtt{r},\varepsilon\}>0$.
By the definition of~$\widehat{\mu}_k$, (A2), and \eqref{eq:boundmukh}, we get 
\begin{equation}\label{eq:bound_mu}
\widehat{\mu}_k\geq\mu_{\min}\quad\text{and}\quad\left\|\nabla h(x_k)\nabla h(x_k)^T\right\|+\widehat{\mu}_k\leq L_0^2+\tau,\quad\text{for }x_k\in\mathbb{B}(x^*,\overline{s}).
\end{equation}
By making $\overline{s}$ smaller if needed, we can guarantee
\begin{equation}\label{eq:choose_s}
\mu_{\min} \geq\frac{2+\sqrt{5}}{4}L\|h(x)\|,\quad\forall x\in\mathbb{B}(x^*,\overline{s}).
\end{equation}
Lipschitz continuity of $h$ and $\overline{s}\leq \mathtt{r}<1$, for all $x\in\mathbb{B}(x^*,\overline{s})$, lead to
\begin{equation}\label{eq:bound_psi}
\psi(x)=\frac{1}{2}\|h(x)-h(x^*)\|^2\leq\frac{L^2}{2}\|x-x^*\|^2\leq\frac{L^2}{2}\|x-x^*\|.
\end{equation}
We now define
$$\Delta:=\frac{2^\theta\kappa L^{2(1-\theta)}(L_0^2+\tau) }
{(1-\theta)\mu_{\min}}, \quad {s}:= \left( \frac{\overline{s}}{1+\Delta}\right)^{\frac{1}{1-\theta}}.$$
From $\overline{s}<1$ and $\theta\in [0,1[$, we obtain $s\leq \overline{s}$.

For $k'\geq \overline{k}_0$, let us choose any $x_{k'}\in\mathbb{B}(x^{*},s)$. Lemma~\ref{Svaiter_lemma} and $d_k=-H_k^{-1}\nabla h(x_{k})h(x_{k})$ imply, for all $k\in \mathbb{N}$,
\begin{equation}\label{eq:conseq_Svaiter}
\psi(x_{k+1})\leq\psi(x_{k})-\frac{1}{2}d_{k}^{T}H_k d_k+\frac{\|d_{k}\|^{2}}{2\widehat{\mu}_{k}}\left(\frac{L^{2}}{16}\|h(x_{k})\|^{2}+L\widehat{\mu}_{k}\|h(x_{k})\|-\widehat{\mu}_{k}^{2}\right).
\end{equation}

Next, let us show by induction that, for $i \in \mathbb{N}$,
\begin{equation}\label{eq:induction}
x_{k'+i}\in\mathbb{B}(x^*,\overline{s}),\quad\|d_{k'+i-1} \| \leq \frac{2\kappa(L_0^2+\tau)}{(1-\theta)\mu_{\min}} \left(\psi(x_{k'+i-1})^{1-\theta}-\psi(x_{k'+i})^{1-\theta}\right)
\end{equation}
It follows from $x_{k'}\in\mathbb{B}(x^*,\overline{s})$ and \eqref{eq:choose_s} that
$$
\widehat{\mu}_{k'} \geq \mu_{\min} \geq\frac{2+\sqrt{5}}{4}L\|h(x_{k'})\|,$$
leading to
\[
\frac{L^{2}}{16}\|h(x_{k'})\|^{2}+L\widehat{\mu}_{k'}\|h(x_{k'})\|-\widehat{\mu}_{k'}^{2}\leq 0.
\]
Then, from~\eqref{eq:conseq_Svaiter}, one can deduce
\begin{equation}\label{eq:psi_x1}
\psi(x_{k'+1})\leq\psi(x_{k'})-\frac{1}{2}d_{k'}^{T}H_{k'} d_{k'}\leq \psi(x_{k'})-\frac{\mu_{\min}}{2}\|d_{k'}\|^2.
\end{equation}
From the convexity of the function $\varphi(t):=-t^{1-\theta}$ with $t>0$, we come to
\begin{equation}\label{eq:convexity_varphi}
\psi(x)^{1-\theta}-\psi(y)^{1-\theta}
\geq  (1-\theta)\psi(x)^{-\theta}\left( \psi(x)-\psi(y)\right),\quad\forall x,y\in\mathbb{R}^m\setminus\Omega.
\end{equation}
This and~\eqref{eq:psi_x1} suggest
\begin{equation}\label{psi_estimate_1}
\psi(x_{k'})^{1-\theta}-\psi(x_{k'+1})^{1-\theta}\geq \frac{(1-\theta)\mu_{\min}}{2} \psi(x_{k'})^{-\theta}\|d_{k'}\|^2.
\end{equation}
It follows from $x_0\in\mathbb{B}(x^*,s)\subseteq\mathbb{B}(x^*,\overline{s})$ and~\eqref{eq:bound_mu} that $\|H_0\|\leq(L_0^2+\tau)$. Hence, by the \L{}ojasiewicz gradient inequality~\eqref{eq:Lojasiewicz_Gradient_Inequality}, we get
\[
\psi(x_{k'})^{\theta} \leq \kappa \|\nabla \psi (x_{k'}) \|
 \leq \kappa \|H_{k'}\| \|d_{k'} \| \leq \kappa (L_0^2+\tau)\|d_{k'} \|.
\]
This,~\eqref{psi_estimate_1}, and~\eqref{eq:bound_psi} yield
\begin{align*}
\|d_{k'} \| &\leq \frac{2\kappa(L_0^2+\tau)}{(1-\theta)\mu_{\min}} \left(\psi(x_{k'})^{1-\theta}-\psi(x_{1})^{1-\theta}\right)\\
&\leq\frac{2\kappa(L_0^2+\tau)}{(1-\theta)\mu_{\min}} \psi(x_{k'})^{1-\theta}\leq \Delta\|x_{k'}-x^*\|^{1-\theta},
\end{align*}
which, proves the second assertion in~\eqref{eq:induction} for $i=1$.
Then, we have
\begin{align*}
\|x_{k'+1}-x^*\| &\leq \|x_{k'}-x^*\|+\|d_{k'}\|
\leq \|x_{k'}-x^*\|+\Delta \|x_{k'}-x^*\|^{1-\theta}\\
&\leq (1+\Delta) \|x_{k'}-x^*\|^{1-\theta}
\leq (1+\Delta) s^{1-\theta} =\overline{s},
\end{align*}
implying $x_{k'+1}\in\mathbb{B}(x^*,\overline{s})$. Now, let us assume that~\eqref{eq:induction} holds for all $i=1,\ldots, k$.
From $x_k\in\mathbb{B}(x^*,\overline{s})$ and~\eqref{eq:choose_s}, it can be deduced
\[
\widehat{\mu}_{k'+k} \geq \mu_{\min} \geq\frac{2+\sqrt{5}}{4}L\|h(x_{k'+k})\|,
\]
leading to
\[
\frac{L^{2}}{16}\|h(x_{k'+k})\|^{2}+L\widehat{\mu}_{k'+k}\|h(x_{k'+k})\|-\widehat{\mu}_{k'+k}^{2}\leq0.
\]
It follows from this and~\eqref{eq:conseq_Svaiter} that
\begin{equation}\label{eq:psi_nonincreasing}
\psi(x_{k'+k+1})\leq \psi(x_{k'+k})-\frac{1}{2} d_{k'+k}^T H_{k'+k} d_{k'+k}\leq\psi(x_{k'+k})-\frac{\mu_{\min}}{2}\|d_{k'+k}\|^2.
\end{equation}
A combination of this inequality and~\eqref{eq:convexity_varphi} leads to
\begin{equation}\label{psi_estimate_k}
\psi(x_{k'+k})^{1-\theta}-\psi(x_{k'+k+1})^{1-\theta}\geq \frac{(1-\theta)\mu_{\min}}{2} \psi(x_{k'+k})^{-\theta}\|d_{k'+k}\|^2
\end{equation}
Further, from $x_{k'+k}\in\mathbb{B}(x^*,\overline{s})$, \eqref{eq:Lojasiewicz_Gradient_Inequality}, and~\eqref{eq:bound_mu}, we obtain
\[
\psi(x_{k'+k})^{\theta} \leq \kappa \|\nabla \psi (x_{k'+k}) \|
 \leq \kappa \|H_{k'+k}\| \|d_{k'+k} \| \leq \kappa (L_0^2+\tau)\|d_{k'+k}\|.
\]
By the latter inequality and~\eqref{psi_estimate_k}, we come to
\[
\|d_{k'+k} \| \leq \frac{2\kappa(L_0^2+\tau)}{(1-\theta)\mu_{\min}} \left(\psi(x_{k'+k})^{1-\theta}-\psi(x_{k'+k+1})^{1-\theta}\right),
\]
proving the second assertion in~\eqref{eq:induction} for $i=k+1$. Then, it follows from~\eqref{eq:bound_psi} that
\begin{align*}
\|x_{k'+k+1}-x^*\| &\leq \|x_{k'}-x^*\|+\sum_{i=k'}^{k'+k}\|d_i\|\\
&\leq \|x_{k'}-x^*\|+\frac{2\kappa(L_0^2+\tau)}{(1-\theta)\mu_{\min}} \sum_{i=k'}^{k'+k} \left(\psi(x_i)^{1-\theta}-\psi(x_{i+1})^{1-\theta}\right)\\
&= \|x_{k'}-x^*\|+\frac{2\kappa(L_0^2+\tau)}{(1-\theta)\mu_{\min}} \left(\psi(x_{k'})^{1-\theta}-\psi(x_{k'+k+1})^{1-\theta}\right)\\
&\leq  \|x_{k'}-x^*\|+\frac{2\kappa(L_0^2+\tau)}{(1-\theta)\mu_{\min}} \psi(x_{k'})^{1-\theta}\\
&\leq (1+\Delta) \|x_{k'}-x^*\|^{1-\theta}
\leq (1+\Delta) s^{1-\theta} =\overline{s}.
\end{align*}
Hence, the first assertion in~\eqref{eq:induction} is valid for $i=k+1$. 

Finally, we are in a position to show that Assertions (ii) is true.
As shown in~\eqref{eq:induction}, $x_k\in\mathbb{B}(x^*,\overline{s})$ for all $k\geq k'$. This and~\eqref{eq:bound_mu}, implies that
$\|H_k\|\leq (L_0^2+\tau)$ for all $k\geq k'$. Hence, for $k\geq k'$, we have
\[
d_k^T H_k d_k=\nabla \psi(x_k)^{T}H_k^{-1}\nabla \psi(x_k)
 \geq \frac{1}{\|H_k\|} \|\nabla \psi(x_k)\|^2 \geq \frac{1}{(L_0^2+\tau)}\|\nabla \psi(x_k)\|^2.
 \]
Then, by~\eqref{eq:psi_nonincreasing}, we get
\[
\psi(x_{k+1})\leq \psi(x_k)-\frac{1}{2(L_0^2+\tau)}\|\nabla \psi(x_k)\|^2.
\]
From this and~\eqref{eq:Lojasiewicz_Gradient_Inequality}, it can be deduced
\[
\psi(x_{k+1})\leq \psi(x_k)-\frac{1}{2\kappa^2(L_0^2+\tau)}\psi(x_k)^{2\theta},\quad \forall~ k\geq k',
\]
which implies that $\{\psi(x_k)\}$ converges to $0$. This ans the H\"older metric subregularity validate the statement of the assertion (ii). 

Applying Lemma~\ref{lem:rate_convergence} with $s_k:=\psi(x_k)$, $\nu:=2\kappa^2(L_0^2+\tau)$ and $\zeta:=2\theta$, we have that the convergence rate are dependent to $\theta$ as claimed in Assertions (iii)-(v). Therefore, the H\"older metric subregularity of $h$ implies that $\{\mathrm{dist}\left(x_{k},\Omega\right)\}$ converges to $0$ with the rate given in (iii)-(v).
\end{proof}

\section{Convergence to a solution of nonlinear systems}
Let us emphasis that the algorithms LMLS and LMTR only guarantee the
convergence of the sequence $\{x_{k}\}$ to a stationary point $x^{*}$ of the merit function
$\psi$, which can be a local non-global minimiser of~\eqref{eq:meritfunc},
i.e.,
\[
\nabla h(x^{*})h(x^{*})=0,~~~h(x^{*})\neq0.
\]
Therefore, the remainder of this section concerns with considering more restrictions
on the mapping $h$ such that the global convergence of $\{x_{k}\}$
to a solution of~\eqref{eq:nonequa} is guaranteed. 

The next theorem extracts some classical results for cases that $\nabla h(x^{*})$ is nonsingular, 
which implies that $x^{*}$ is a solution of~\eqref{eq:nonequa}. Moreover, the worst-case global and 
evaluation complexities to attain solution of~\eqref{eq:nonequa} are provided under the nonsingularity
of $\nabla h(x)\nabla h(x)^{T}$ for all $x\in\mathcal{L}(x_{0})$.
Under the assumption that all accumulation points of $\left\{ x_{k}\right\}$
are solutions of~\eqref{eq:nonequa} and $\nabla h(x^{*})$ is
nonsingular for the accumulation point $x^{*}$, it is proved that the whole sequence
$\left\{ x_{k}\right\} $ converges to the isolated solution $x^{*}$ of~\eqref{eq:nonequa}.

\begin{thm}
\label{thm:posi} Let $\left\{ x_{k}\right\} $ be the sequence generated
by LMLS or LMTR and (A1)-(A3) hold. Then
\begin{description}
\item [{(i)}] if $\nabla h(x^{*})$ is nonsingular at any accumulation point $x^{*}$ of $\left\{ x_{k}\right\} $, 
then $x^{*}$ is a solution
of the nonlinear system~\eqref{eq:nonequa}.
\item [{(ii)}] if the matrix $\nabla h(x)\nabla h(x)^{T}$ is nonsingular for all
$x\in\mathcal{L}(x_{0})$, i.e., there exists $\lambda>0$ such that
$\lambda_{\min}(\nabla h(x)\nabla h(x)^{T})>\lambda$, then, for 
LMLS,  
\begin{equation}
N_{i}(\varepsilon)\leq\lceil\lambda^{-2}c_{2}^{-1}\psi(x_{0})\varepsilon^{-2}+1\rceil\label{eq:nieps2}
\end{equation}
and
\begin{equation}\label{eq:nfeps2}
N_{f}(\varepsilon)\leq\frac{\lceil\lambda^{-2}c_{2}^{-1}\psi(x_{0})\varepsilon^{-2}+1\rceil
(\log(\widehat{\alpha})-\log(\overline{\alpha}))}{\log(\rho)},
\end{equation}
and, for LMTR, 
\begin{equation}
N_{i}(\varepsilon)\leq\lceil\lambda^{-2}c_{3}^{-1}\psi(x_{0})\varepsilon^{-2}+1\rceil\label{eq:nieps3}
\end{equation}
and
\begin{equation} \label{eq:nfeps3}
N_{f}(\varepsilon)\leq \lceil \lambda^{-2}\overline{c}_3^{-1}\psi(x_{0})\varepsilon^{-2}+1\rceil 
\left(\frac{\log(\tau)-\log(\lambda_{min}\mu_{min}))}{\log(\rho_1)}\right).
\end{equation}
\item [{(iii)}] if all accumulation points of $\left\{ x_{k}\right\} $
are solutions of the nonlinear system~\eqref{eq:nonequa}, $x^{*}$is
an accumulation point of $\left\{ x_{k}\right\} $ such that $\nabla h(x^{*})$
is nonsingular, and
\begin{equation}
\lim_{k\rightarrow\infty}\|x_{k+1}-x_{k}\|=0,\label{eq:limxk1}
\end{equation}
then $\left\{ x_{k}\right\} $ converges to $x^{*}$.
\end{description}
\end{thm}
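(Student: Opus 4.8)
The plan is to handle the three assertions in order, each building on the global convergence results already established (Theorem~\ref{thm:suffdecrease} for LMLS and Theorem~\ref{thm:global_convergence} for LMTR) together with the complexity bounds of Theorems~\ref{thm:complexity} and~\ref{thm:complexity1}.

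For Assertion~(i), I would argue by contradiction, or more directly: by the global convergence theorems, any accumulation point $x^*$ of $\{x_k\}$ satisfies $\nabla\psi(x^*)=\nabla h(x^*)h(x^*)=0$. If $\nabla h(x^*)$ is nonsingular, then $\nabla h(x^*)h(x^*)=0$ forces $h(x^*)=0$, so $x^*\in\Omega$ is a solution of~\eqref{eq:nonequa}. This is essentially a one-line consequence of the stationarity already proved.

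For Assertion~(ii), the key observation is that the hypothesis $\lambda_{\min}(\nabla h(x)\nabla h(x)^T)>\lambda$ on $\mathcal{L}(x_0)$ upgrades the first-order stationarity measure: since $x_k\in\mathcal{L}(x_0)$ by Proposition~\ref{eq:boundLk}(i) (resp.\ \eqref{eq:xkLx0}), we have
\[
\|\nabla\psi(x_k)\|^2 = h(x_k)^T\nabla h(x_k)^T\nabla h(x_k)h(x_k)\geq \lambda\,\|h(x_k)\|^2 = 2\lambda\,\psi(x_k).
\]
Hence $\psi(x_k)\leq\varepsilon^2/(2\lambda)$ whenever $\|\nabla\psi(x_k)\|\leq\varepsilon$, but more usefully, running the complexity argument of Theorem~\ref{thm:complexity}(i) (resp.\ Theorem~\ref{thm:complexity1}(i)) with $\|\nabla\psi(x_k)\|^2$ replaced by the lower bound $\lambda\|h(x_k)\|^2$, and noting the stopping test also includes $\|h(x_k)\|\le\varepsilon$, the telescoping sum $\psi(x_0)=D_0\geq\sum(D_i-D_{i+1})\geq c_2\lambda\sum\|h(x_i)\|^2 > c_2\lambda\,\varepsilon^2\,\widehat k$ (resp.\ with $\overline c_3$) yields exactly the claimed bounds~\eqref{eq:nieps2}--\eqref{eq:nfeps3}, after carrying the extra factor $\lambda^{-1}$ (hence $\lambda^{-2}$ once combined with the existing $\varepsilon^{-2}$) through the ceiling. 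For LMTR the function-evaluation bound additionally uses that $\widehat\mu_k=\rho_1^{p_k}\lambda_k\mu_k\ge\rho_1^{p_k}\lambda_{\min}\mu_{\min}$, which is where the $\log(\lambda_{\min}\mu_{\min})$ term in~\eqref{eq:nfeps3} comes from, via Proposition~\ref{prop:boundpk}(iii).

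For Assertion~(iii), the idea is a standard isolatedness-plus-asymptotic-regularity argument. Since $\nabla h$ is Lipschitz, nonsingularity of $\nabla h(x^*)$ persists on a ball $\mathbb{B}(x^*,r_0)$, so $x^*$ is an \emph{isolated} zero of $h$ (and thus an isolated accumulation point of $\{x_k\}$, because every accumulation point is a solution of~\eqref{eq:nonequa} by hypothesis). Pick $r_0$ small enough that $\mathbb{B}(x^*,r_0)$ contains no other solution of~\eqref{eq:nonequa}, hence no other accumulation point. There is a subsequence $x_{k_j}\to x^*$; by~\eqref{eq:limxk1}, $\|x_{k+1}-x_k\|\to0$, so once some iterate enters a small ball $\mathbb{B}(x^*,r_0/2)$ the iterates cannot jump out in one step. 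If infinitely many iterates left $\mathbb{B}(x^*,r_0)$, a continuity/compactness argument on the ``crossing'' indices produces an accumulation point on the sphere of radius between $r_0/2$ and $r_0$, distinct from $x^*$ — a contradiction. Therefore the tail $\{x_k\}_{k\geq K}$ stays in $\mathbb{B}(x^*,r_0)$; since every accumulation point of a bounded sequence trapped in a neighbourhood of its unique accumulation point must equal that point, $x_k\to x^*$.

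The main obstacle is the bookkeeping in Assertion~(ii): one must be careful to keep the stopping criterion in the form ``$\|h(x_k)\|>\varepsilon$ \emph{or} $\|\nabla\psi(x_k)\|>\varepsilon$'' consistent with the substitution $\|\nabla\psi(x_k)\|^2\ge\lambda\|h(x_k)\|^2$ so that the iteration count is genuinely driven by $\|h(x_k)\|$, and to track precisely how the factor $\lambda^{-1}$ propagates through both the iteration bound and (for LMTR) the per-iteration inner-loop bound, so that the final exponents and the $\log(\lambda_{\min}\mu_{\min})$ term match~\eqref{eq:nieps2}--\eqref{eq:nfeps3}. The topological argument in~(iii) is routine but needs the observation that ``all accumulation points are solutions'' plus ``$\nabla h(x^*)$ nonsingular'' is what forces $x^*$ to be isolated among accumulation points.
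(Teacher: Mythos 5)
Your proposal follows essentially the same route as the paper's proof: (i) is the one-line combination of stationarity (Theorems~\ref{thm:suffdecrease} and~\ref{thm:global_convergence}) with nonsingularity of $\nabla h(x^*)$; (ii) rests on the inequality $\|h(x_k)\|\leq\lambda^{-1/2}\|\nabla\psi(x_k)\|$ for $x_k\in\mathcal{L}(x_0)$ fed back into the complexity results of Theorems~\ref{thm:complexity} and~\ref{thm:complexity1}; and (iii) is the same isolatedness-via-inverse-function-theorem argument combined with $\|x_{k+1}-x_k\|\rightarrow 0$, which you in fact write out more carefully than the paper does. The only wrinkle is that your telescoping in (ii) actually yields the factor $\lambda^{-1}$ rather than the stated $\lambda^{-2}$ (your parenthetical ``hence $\lambda^{-2}$'' does not follow), but the paper's own terse proof has exactly the same looseness, so this is not a gap in your approach.
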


\begin{proof}
For any accumulation point $x^{*}$of $\left\{ x_{k}\right\}$, it follows from Theorem~\ref{thm:global_convergence} 
that $\nabla h(x^{*})h(x^{*})=0$. This, along with the nonsingularity of $\nabla h(x^{*})$, implies Assertion (i).

To prove Assertion (ii), we note that
\[
\|\nabla\psi(x_{k})\|^{2}=h(x_{k})\nabla h(x_{k})\nabla h(x_{k})^{T}h(x_{k})\geq\lambda\|h(x_{k})\|^{2},
\]
i.e., $\|h(x_{k})\|\leq\lambda^{-1/2}\|\nabla\psi(x_{k})\|$. This
and Proposition \ref{thm:complexity} (i)-(ii) give~\eqref{eq:nieps2}
and~\eqref{eq:nfeps2}, respectively. Similarly,  \eqref{eq:nieps3}
and~\eqref{eq:nfeps3} follow from this inequality and
Proposition \ref{thm:complexity1} (i)-(ii).

In order to prove Assertion (iii), let us assume that all accumulation points of
$\left\{ x_{k}\right\} $ are solutions of~\eqref{eq:nonequa}, $x^{*}$is
an accumulation point such that $\nabla h(x^{*})$ is nonsingular,
and~\eqref{eq:limxk1} holds. From the inverse function theorem and
the nonsingularity of $\nabla h(x^{*})$, there exists a neighborhood
around 0 such that $h$ is invertible. Therefore, there exists a neighborhood
$\mathbb{B}(x^*,\mathtt{r}_1)$ for $\mathtt{r}_1>0$ such that
\[
h(x)\neq0,\quad\forall\,x\in\mathbb{B}(x^{*},r_{1})\:\mbox{and}\:x\neq x^{*},
\]
implying

\[
\|h(x)\|>0,\quad\forall\,x\in\mathbb{B}(x^{*},r_{1})\:\mbox{and}\:x\neq x^{*}.
\]
Since $x^{*}$is an accumulation point of $\left\{ x_{k}\right\} $,
$\mathbb{B}(x^{*},r_{1})$ contains an infinite number of iteration
points of $\left\{ x_{k}\right\} $. It remains to show that there
exists $k_{2}\in\mathbb{N}$ such that $x_{k}\in\mathbb{B}(x^{*},r_{1})$, for all $k\geq k_{2}$. 
Hence, for an arbitrary $\varepsilon\in(0,r_{1})$,
the set $\mathbb{B}(x^{*},r_{1})-\mathbb{B}(x^{*},\varepsilon)$ involves
only a finite number of iterations of $\left\{ x_{k}\right\} $, i.e.,
there exists $k_{3}\in\mathbb{N}$ such that
\[
x_{k}\in\mathbb{B}(x^{*},\varepsilon),\quad\forall\,k\geq k_{3}.
\]
It follows from~\eqref{eq:limxk1} that there exists $k_4\in\mathbb{N}$
such that
\[
\|x_{k+1}-x_{k}\|\leq\delta-\varepsilon,\quad\forall\,k\geq k_{3}.
\]
Let us set $k_{2}:=\max\left\{ k_{3},k_{4}\right\} $ leading
to
\[
\|x_{k+1}-x^{*}\|\leq\|x_{k+1}-x_{k}\|+\|x_{k}-x^{*}\|\leq\delta,\quad\forall\,k\geq k_{2},
\]
giving the result.
\end{proof}

The mapping $h$ is called strictly monotone on $\mathbb{R}^{m}$
if
\[
(h(x)-h(y))^{T}(x-y)>0,\quad\forall\,x,y\in\mathbb{R}^{m}.
\]
In addition, the mapping $h$ is called strictly duplomonotone with
constant $\overline{\tau}>0$ if
\[
(h(x)-h(x-\tau h(x)))^{T}h(x)>0,\quad\forall\,x\in\mathbb{R}^{m},\tau\in(0,\overline{\tau}],
\]
whenever $h(x)\neq0$; see \cite{artacho_globally_2014,ortega_iterative_2000}. In the next result, we will show that if the
mapping $h$ or $-h$ is strictly monotone (duplomonotone), then the
sequence $\left\{ x_{k}\right\} $ generated by LMLS
converges to the unique solution of the nonlinear system~\eqref{eq:nonequa}.

\begin{thm}
\label{thm:mono}Let $\left\{ x_{k}\right\} $ be the sequence generated
by LMLS or LMTR and (A1)-(A3) hold.
\begin{description}
\item [{(i)}] If the mapping $h$ or $-h$ is strictly monotone, then $\left\{ x_{k}\right\} $
converges to the unique solution of the nonlinear system~\eqref{eq:nonequa}.
\item [{(ii)}] If the mapping $h$ or $-h$ is strictly duplomonotone,
then $\left\{ x_{k}\right\} $ converges to a solution of the nonlinear
system~\eqref{eq:nonequa}.
\end{description}
\end{thm}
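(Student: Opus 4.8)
The plan is to take the global convergence statements already established — Theorems~\ref{thm:suffdecrease} and~\ref{thm:global_convergence}, which give $\nabla\psi(x_k)=\nabla h(x_k)h(x_k)\to 0$ along $\{x_k\}$ — and upgrade ``every accumulation point is a stationary point of $\psi$'' to ``the whole sequence converges to a zero of $h$'', using the extra structure supplied by (duplo)monotonicity. It suffices to treat the case where $h$ itself is strictly monotone (resp.\ strictly duplomonotone), since replacing $h$ by $-h$ changes neither $\psi$ nor $\Omega$. By (A2) together with Proposition~\ref{eq:boundLk}(i) and~\eqref{eq:xkLx0} the iterates stay in the bounded set $\mathcal{L}(x_0)$, so $\{x_k\}$ is bounded, has at least one accumulation point, and by~\eqref{eq:limngrad}/\eqref{eq:limngrad1} every accumulation point $\bar x$ satisfies $\nabla h(\bar x)h(\bar x)=0$.

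For (i), strict monotonicity first gives uniqueness of the zero: if $h(x_1)=h(x_2)=0$ with $x_1\neq x_2$ then $0=(h(x_1)-h(x_2))^T(x_1-x_2)>0$, a contradiction, so $\Omega=\{x^*\}$ is a singleton. The crux is to show that any stationary point $\bar x$ of $\psi$ is a genuine zero of $h$: strict monotonicity makes the Jacobian $\nabla h(\bar x)^T$ positive semidefinite, and one argues that it is nonsingular at $\bar x$, so that $\nabla h(\bar x)h(\bar x)=0$ forces $h(\bar x)=0$, i.e.\ $\bar x=x^*$ — which is exactly the hypothesis of Theorem~\ref{thm:posi}(i). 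Hence $x^*$ is the unique accumulation point of the bounded sequence $\{x_k\}$. To pass to convergence of the whole sequence I would bound the steps: $\|x_{k+1}-x_k\|\le\overline{\alpha}\,\mu^{-1}\|\nabla\psi(x_k)\|$ for LMLS (by~\eqref{eq:dkInequality} and~\eqref{eq:muklowerbound}) and $\|x_{k+1}-x_k\|=\|d_k\|\le\mu_{min}^{-1}\|\nabla\psi(x_k)\|$ for LMTR, so $\|x_{k+1}-x_k\|\to 0$; a bounded sequence with a single accumulation point and asymptotic regularity converges, which is precisely Theorem~\ref{thm:posi}(iii) once nonsingularity of $\nabla h(x^*)$ is available. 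This yields~(i).

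For (ii) the skeleton is the same but the input is weaker, which is why only ``a'' solution (not ``the unique'' one) can be claimed: strict duplomonotonicity imposes nothing at points where $h$ vanishes, so $\Omega$ need not be a singleton. One again shows that a stationary point $\bar x$ of $\psi$ cannot have $h(\bar x)\neq 0$, this time by feeding $\nabla h(\bar x)h(\bar x)=0$ into the duplomonotonicity inequality evaluated along the ray $t\mapsto\bar x-t\,h(\bar x)$, so that every accumulation point of $\{x_k\}$ lies in $\Omega$. In particular $\lim_k\psi(x_k)=0$ (the limit exists by Theorem~\ref{thm:global_convergence}), whence $\|h(x_k)\|\to 0$ and, by the H\"older metric subregularity~\eqref{eq:errorBound}, $\mathrm{dist}(x_k,\Omega)\to 0$; combining this with asymptotic regularity $\|x_{k+1}-x_k\|\to 0$ one concludes that $\{x_k\}$ converges to a single point of $\Omega$.

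The main obstacle is the step common to both parts: proving that a stationary point of $\psi$ is actually a zero of $h$. Strict monotonicity alone permits $\nabla h(\bar x)$ to be singular, so this cannot be decided by a first-order computation at $\bar x$; it must genuinely exploit the monotonicity (resp.\ duplomonotonicity) inequality, or — as is customary — import nonsingularity of the relevant Jacobian and then invoke Theorem~\ref{thm:posi}(i),(iii). A secondary difficulty is specific to (ii): without uniqueness of the zero, deducing convergence of the \emph{whole} sequence (rather than convergence of $\mathrm{dist}(x_k,\Omega)$ to $0$) requires care, and is where asymptotic regularity together with the error bound has to be used. Once these points are handled, uniqueness, boundedness, and the final elementary argument follow directly from the results already in place.
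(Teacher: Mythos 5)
Your overall skeleton is the right one and matches the paper's: the iterates stay in the bounded set $\mathcal{L}(x_0)$, every accumulation point $x^*$ satisfies $\nabla h(x^*)h(x^*)=0$ by \eqref{eq:limngrad} and \eqref{eq:limngrad1}, uniqueness of the zero in (i) follows from strict monotonicity, and your sketch for (ii) --- evaluating the duplomonotonicity inequality along the ray $t\mapsto \bar x-t\,h(\bar x)$ and letting $t\to 0$ --- is exactly the computation the paper performs. The genuine gap is the central step of part (i). You propose to show that $\nabla h(\bar x)$ is nonsingular at a stationary point and then invoke Theorem~\ref{thm:posi}; but strict monotonicity does not yield nonsingularity (already in one dimension $h(x)=x^3$ is strictly monotone with $h'(0)=0$), and you concede in your closing paragraph that you cannot close this step, offering only to ``import nonsingularity'' as an extra hypothesis --- which would collapse the theorem into Theorem~\ref{thm:posi}, precisely what the paper's remark after the theorem says it is not.

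The paper's actual route needs no nonsingularity and is a first-order computation at $\bar x$, contrary to your claim that such a computation cannot decide the issue: one does not need the quadratic form of $\nabla h(x^*)$ to be positive in all directions, only its value in the single direction $z=h(x^*)$. Dividing the strict monotonicity inequality by $t>0$ along $x^*+t\,h(x^*)$ and letting $t\to 0$, the paper obtains $h(x^*)^{T}\nabla h(x^*)h(x^*)\neq 0$ whenever $h(x^*)\neq 0$ (the analogous limit along $x^*-\tau h(x^*)$ handles the duplomonotone case); since stationarity forces $h(x^*)^{T}\nabla h(x^*)h(x^*)=0$, it follows that $h(x^*)=0$, and in (i) uniqueness of the zero plus boundedness of $\{x_k\}$ then gives convergence of the whole sequence. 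So the missing idea in your attempt is this directional use of the monotonicity inequality itself, rather than any spectral property of the Jacobian. Your secondary worry about whole-sequence convergence in (ii) is fair --- the paper's own proof of (ii) only establishes that every accumulation point is a zero --- but that is not where your argument breaks down; the decisive divergence from the paper is the stationary-point-to-zero step in (i).
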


\begin{proof}
In order to prove Assertion (i), let $h$ or $-h$ be strictly monotone and $x^{*}$ be an accumulation point
of $\left\{ x_{k}\right\} $. If $h$ is strictly monotone, for the points $x$ and $x+th$ with $t>0$ and $h\in\mathbb{R}^{m}$,
we can deduce
\[
\begin{split}
z^{T}\nabla h(x)z=z^{T}\left(\lim_{t\rightarrow0}\frac{h(x+tz)-h(x)}{t}\right)
=\lim_{t\rightarrow0}\left(z^{T}\frac{h(x+tz)-h(x)}{t}\right)>0,\quad\forall\,x,z\in\mathbb{R}^{m}.
\end{split}
\]
If $-h$ is strictly monotone, then
\[
\begin{split}
z^{T}\nabla h(x)z  =-z^{T}\left(\lim_{t\rightarrow 0}\frac{h(x+tz)-h(x)}{t}\right)
=-\lim_{t\rightarrow0}\left(z^{T}\frac{h(x+tz)-h(x)}{t}\right)<0,\quad\forall\,x,z\in\mathbb{R}^{m}.
\end{split}
\]
By setting $z=h(x^{*})$ and $x=x^{*}$ in the last two inequalities, we get
\[
h(x^{*})^{T}\nabla h(x^{*})h(x^{*})\neq0,\quad\forall\,h(x^{*})\in\mathbb{R}^{m},\:h(x^{*})\neq0.
\]
This, \eqref{eq:limngrad}, and~\eqref{eq:limngrad1} imply $h(x^{*})=0$.

To prove Assertion (ii), let $h$ be strictly duplomonotone, which leads to

\begin{eqnarray*}
h(x)^{T}\nabla h(x)h(x) & = & \left(\lim_{\tau\rightarrow0}\frac{h(x-\tau h(x))-h(x)}{-\tau}\right)^{T}h(x)\\
 & = & \lim_{\tau\rightarrow0}\left(\frac{h(x)-h(x-\tau h(x))}{\tau}\right)^{T}h(x)>0,\quad\forall\,x\in\mathbb{R}^{m},\tau\in(0,\overline{\tau}].
\end{eqnarray*}
If $-h$ is strictly duplomonotone, then

\begin{eqnarray*}
-h(x)^{T}\nabla h(x)h(x) & = & -\left(\lim_{\tau\rightarrow0}\frac{h(x+\tau h(x))-h(x)}{\tau}\right)^{T}h(x)\\
 & = & \lim_{\tau\rightarrow0}\left(\frac{-h(x)+h(x-\tau(-h(x)))}{\tau}\right)^{T}(-h(x))>0,\quad\forall\,x\in\mathbb{R}^{m},\tau\in(0,\overline{\tau}].
\end{eqnarray*}
The result follows from the last two inequalities at $x=x^{*}$, \eqref{eq:limngrad}, and~\eqref{eq:limngrad1}.
\end{proof}
Note that the strict monotonicity of $h$ does not implies the positive
definiteness of $\nabla h(x)$. Therefore the results of Theorem \ref{thm:mono} (i)
is not a trivial consequence of Theorem \ref{thm:posi} (i).

\section{Application to biochemical reaction networks}\label{sec.numapp}
In this section, we use the following notation: 
$\mathbb{Z}_+^{m\times n}:= \left\{ A\in \mathbb{Z}^{m\times n} ~|~ A_{ij}\geq 0,~i=1,\ldots,m,~j=1,\ldots,n\right\}$,
$\mathbb{R}_+^{m}:= \left\{ a\in \mathbb{R}^{m} ~|~ a_i\geq 0,~i=1,\ldots,m\right\}$, and
$\mathbb{R}_{++}^{m}:= \left\{ a\in \mathbb{R}^{m} ~|~ a_i> 0,~i=1,\ldots,m\right\}$.
Let us consider a biochemical reaction network with $m$ molecular
species and $n$ reversible elementary reactions\footnote{An elementary reaction is a 
chemical reaction for which no intermediate
molecular species need to be postulated in order to describe the chemical
reaction on a molecular scale.}. We define {\it forward} and \emph{ reverse stoichiometric matrices}, $F,R\in\mathbb{\mathbb{Z}}_{+}^{m\times n}$,
respectively, where $F_{ij}$ denotes the \emph{stoichiometry}\footnote{Reaction stoichiometry is a quantitative relationship between the
relative quantities of molecular species involved in a single chemical
reaction.} of the $i^{th}$ molecular species in the $j^{th}$ forward reaction
and $R_{ij}$ denotes the stoichiometry of the $i^{th}$ molecular
species in the $j^{th}$ reverse reaction.  We assume that \emph{every
reaction conserves mass}, i.e., there exists at least a positive
vector $l\in\mathbb{R}_{++}^{m}$ such that $(R-F)^{T}l=0$; cf. \cite{gevorgyan2008detection}. 
The matrix $N:=R-F$ represents net reaction stoichiometry and may be viewed as 
an incidence matrix of a directed hypergraph; see  \cite{Klamt2009}. In practice,
there are less molecular species than net reactions ($m < n$). We assume the
cardinality of each row of $F$ and $R$ is at least one, and the cardinality
of each column of $R-F$ is at least two. The matrices $F$ and $R$ are sparse and 
the sparsity pattern depends on the particular biochemical reaction network being modeled. 
It is here assumed that $\text{rank}([F,R])=m$, which is a requirement for kinetic consistency; 
cf.~\cite{Fleming20161}. 

Let $c\in\mathbb{R}_{++}^{m}$ be a vector of molecular
species concentrations. For nonnegative elementary kinetic
parameters $k_{f},k_{r}\in\mathbb{R}_{+}^{n}$, \emph{elementary
reaction kinetics} for forward and reverse elementary reaction rates
as $s(k_{f},c):=\exp(\ln(k_{f})+F^{T}\ln(c))$ and $r(k_{r},c):=\exp(\ln(k_{r})+R^{T}\ln(c))$,
respectively, where $\exp(\cdot)$ and $\ln(\cdot)$ denote the respective
componentwise functions; see, e.g.,~\cite{artacho_accelerating_2015,Fleming20161}.
Then, the system of differential equations
\begin{eqnarray}
\frac{dc}{dt} & \equiv & N(s(k_{f},c)-r(k_{r},c)) \label{eq:dcdt2}\\
 & = & N\left(\exp(\ln(k_{f})+F^{T}\ln(c)\right)-\exp\left(\ln(k_{r})+R^{T}\ln(c))\right)
 =:-f(c).\nonumber
\end{eqnarray}
shows the deterministic dynamical equation for time evolution of molecular
species concentration. A vector $c^{*}$ is called a \emph{steady state} if and only if 
$f(c^{*})=0$. Hence, $c^{*}$ is a steady state of the biochemical system
if and only if
\[
s(k_{f},c^{*})-r(k_{r},c^{*})\in\mathcal{N}(N),
\]
where $\mathcal{N}(N)$ stands for the null space of $N$. The set of steady states 
$\Omega_1=\left\{ c\in\mathbb{R}_{++}^{m},\,f(c)=0\right\}$
will be unchanged if $N$ is replaced by a matrix $\bar{N}$
with the same kernel. Suppose that $\bar{N}\in\mathbb{Z}^{r\times n}$
is the submatrix of $N$ whose rows are linearly independent, then 
$\mathrm{rank}\left(\bar{N}\right)=\mathrm{rank}(N)\eqqcolon r.$
If one replaces $N$ by $\bar{N}$ and transforms~\eqref{eq:dcdt2}
into logarithmic scale, by letting $x\coloneqq\ln(c)\in\mathbb{R}^{m}$,
$k\coloneqq[\ln(k_{f})^{T},\,\ln(k_{r})^{T}]^{T}\in\mathbb{R}^{2n}$,
then the right-hand side of~\eqref{eq:dcdt2} can be translated to
\begin{equation}
\bar{f}(x):=\left[\bar{N},-\bar{N}\right]\exp\left(k+[F,\,R]^{T}x\right),\label{eq:f(x)}
\end{equation}
where $\left[\,\cdot\thinspace,\cdot\,\right]$ stands for the horizontal
concatenation operator.

Let $L\in\mathbb{R}^{m-r,m}$ be a basis for the left nullspace
of $N$, i.e., $L^TN=0$, where $\mathrm{rank}(N)=r$ and $\mathrm{rank}(L)=m-r$.
The system satisfies \emph{moiety conservation} if for
any initial concentration $c_{0}\in\mathbb{R}_{++}^{m}$, it holds
\[
L\,c=L\,\mathrm{exp}(x)=l_{0},
\]
where $l_{0}\in\mathbb{R}_{++}^{m}.$ It is possible to compute $L$ such that each corresponds 
to a structurally identifiable conserved moiety in a biochemical reaction network; 
cf.~\cite{Haraldsdttir2016}. Therefore, finding the \emph{moiety conserved
steady state} of a biochemical reaction network is equivalent to finding a
zero of the mapping 
\begin{equation}\label{eq:steadyStateEquation}
h:\mathbb{R}^m\rightarrow\mathbb{R}^n~~ \mathrm{with}~~ h(x):=\left(\begin{array}{c}
\bar{f}(x)\\
L\,\mbox{exp}(x)-l_{0}
\end{array}\right).
\end{equation}
It was shown by the authors in Section 4.1 of \cite{Ahookhosh_2017} that the merit function $\psi$ satisfies
\L{}ojasiewicz gradient inequality (with an exponent $\theta\in [0,1[$) and
the mapping $h$ is H\"older metrically subregular at $(x^{*},0)$, i.e., the assumption (A1) holds.

\subsection{Computational results}\label{sec:numexp}
We find zeros of the mapping \eqref{eq:steadyStateEquation} with a set of real-world biological data
using LMLS and LMTR. In details, we compare the performance of LMLS and LMTR with some state-of-the-art 
algorithms on a set of 21 biochemical reaction networks given in Table \ref{t.biologicalModel}. In Section 4.2 of 
\cite{Ahookhosh_2017}, it is computationally shown that $\nabla h$ is rank-deficient or ill-conditioned
at zeros of the mapping $h$ \eqref{eq:steadyStateEquation} for these biological models. This clearly
justifies the reason of unsuccessful performance of many existing algorithms 
(e.g., gradient descent, Gauss-Newton, and trust-region methods) and vindicates
the development of the two adaptive Levenberg-Mardquart methods (LMLS and LMTR) 
for such difficult problems.

\begin{table}[htbp]
\caption{The list of 21 biological models, where the stoichiometric matrix $N$ is $m\times n$ and $rank$ is the rank of the matrix $N$.}
\label{t.biologicalModel}
\begin{center}\footnotesize
\renewcommand{\arraystretch}{1.2}
\begin{tabular}{|l|rrr|l|rrr|}
\hline
\multirow{2}{*}{Model} \hspace{10mm}& \multirow{2}{*}{$m$} \hspace{10mm}& \multirow{2}{*}{$n$} \hspace{10mm}& 
\multirow{2}{*}{$rank$} \hspace{10mm}& \multirow{2}{*}{Model} \hspace{10mm}& \multirow{2}{*}{$m$} \hspace{10mm}& 
\multirow{2}{*}{$n$} \hspace{5mm}& \multirow{2}{*}{$rank$} \\
&&&&&&&\\
\hline
1. Ecoli\_core &72&73&61 & 12. iMB745 &525&598&490\\
2. iAF692 &462&493&430 & 13. iNJ661 &651&764&604\\
3. iAF1260 &1520&1931&1456 & 14. iRsp1095 &966&1042&921\\
4. iBsu1103 &993&1167&956 & 15. iSB619 &462&508&435\\
5. iCB925 &415&558&386 & 16. iTH366 &583&606&529\\
6. iIT341 &424&428&392 & 17. iTZ479\_v2 &435&476&415\\
7. iJN678 &641&669&589 & 18. iYL1228 &1350&1695&1280\\
8. iJN746 &727&795&700 & 19. L\_lactis\_MG1363 &483&491&429\\
9. iJO1366 &1654&2102&1582 & 20. Sc\_thermophilis\_rBioNet &348&365&320\\
10. iJP815 &524&595&501& 21. T\_Maritima &434&470&414\\
11. iJR904 &597&757&564 & & & &\\
\hline
\end{tabular}
\end{center}
\end{table}

All codes are written in MATLAB and runs
are performed on a Dell Precision Tower 7000 Series 7810 (Dual Intel
Xeon Processor E5-2620 v4 with 32 GB RAM). We compare LMLS and LMTR with
\renewcommand{\labelitemi}{$\bullet$}
\begin{itemize}
\item LM-YF: a Levenberg--Marquard line search method with $\mu_{k}=\|h(x_{k})\|^{2}$, 
given by Yamashita and Fukushima~\cite{alefeld_rate_2001};
\item LM-FY: a Levenberg--Marquardline search method with $\mu_{k}=\|h(x_{k})\|$, 
given by Fan and Yuan~\cite{fan_quadratic_2005};
\item LevMar: a Levenberg--Marquard trust-region method with $\mu_{k}=\|\nabla h(x_{k})h(x_{k})\|$, 
given by Ipsen et al.~\cite{IpsKP}.
\end{itemize}
The codes of LMLS and LMTR
are publicly available as a part of the COBRA Toolbox v3.0 \cite{heirendt_creation_2018}.
Users can pass the solver name to the parameter structure of the MATLAB function {\tt optimizeVKmodels.m}.
For both LMLS and LMTR, on the basis of our experiments with 
the mapping~\eqref{eq:steadyStateEquation}, we set $\omega_k:=1-\xi_k$ and
\begin{equation}\label{eq:xi}
\xi_{k}:=\left\{ \begin{array}{ll}
0.95 & ~\mathrm{if}~(0.95)^{k}>10^{-2},\\
\max\left((0.95)^{k},10^{-10}\right) & ~\mathrm{otherwise},
\end{array}\right.
\end{equation}
implying $\xi_k \in [10^{-10}, 0.95]$. We here use the starting point $x_{0}=0$ and consider the stopping
criterion
\begin{equation}
\|h(x_{k})\|\leq\max(10^{-6},10^{-12}\|h(x_{0})\|)~~\mathrm{or}~~\|\nabla\psi(x_{k})\|\leq\max(10^{-6},10^{-12}\|\nabla\psi(x_{0})\|),\label{eq:stopcrit}
\end{equation}
cf. \cite{BelCGMT}. We stop the algorithms if either~\eqref{eq:stopcrit} holds or the maximum number of iterations
(say 10,000 for tuning $\eta$ and 100,000 for the comparison) is reached. While LMLS uses the parameters
\[
\overline{\alpha}=1,~ \rho=0.5,~ \sigma=10^{-2},~ 
\theta_{min}=0, ~ \theta_{max}=0.95, ~\theta_k=0.95,
\]
LMTR employs the parameters
\[
\rho_1=2,~ \rho_2=0.5,~\upsilon_1=10^{-4},~\upsilon_1=0.9,~ \lambda_0=10^{-2},~
~\mu_{\min}=10^{-8},\theta_{min}=0, ~ \theta_{max}=0.95,~\theta_k=0.95.
\]

In our comparison, $N_i$, $N_{f}$ and $T$ denote the total number of iterations,
the total number of function evaluations, and the running time, respectively. To 
illustrate the results, we used the Dolan and Mor\'e performance profile~\cite{Dolan2002} 
with the performance measures $N_{f}$ and $T$. In this procedure, the performance
of each algorithm is measured by the ratio of its computational outcome
versus the best numerical outcome of all algorithms. This performance
profile offers a tool to statistically compare the performance of algorithms. Let $\mathcal{S}$ be a set of all algorithms
and $\mathcal{P}$ be a set of test problems. For each problem $p$
and algorithm $s$, $t_{p,s}$ denotes the computational outcome with respect to the performance index, which is used in the definition of the performance ratio
\begin{equation}
r_{p,s}:=\frac{t_{p,s}}{\min\{t_{p,s}:s\in\mathcal{S}\}}.\label{eq:perf}
\end{equation}
If an algorithm $s$ fails to solve a problem $p$, the procedure
sets $r_{p,s}:=r_\text{failed}$, where $r_\text{failed}$ should be strictly
larger than any performance ratio~\eqref{eq:perf}. Let $n_p$ be the number of problems in the
experiment. For any factor
$\tau\in\mathbb{R}$, the overall performance of an algorithm $s$ is given by
\[
\rho_{s}(\tau):=\frac{1}{n_{p}}\textrm{size}\{p\in\mathcal{P}:r_{p,s}\leq\tau\}.
\]
Here, $\rho_{s}(\tau)$ is the probability that a performance ratio
$r_{p,s}$ of an algorithm $s\in\mathcal{S}$ is within a factor $\tau$
of the best possible ratio. The function $\rho_{s}(\tau)$ is a distribution
function for the performance ratio. In particular, $\rho_{s}(1)$
gives the probability that an algorithm $s$ wins over all other considered
algorithms, and $\lim_{\tau\rightarrow r_{\mathrm{failed}}}\rho_{s}(\tau)$
gives the probability that algorithm $s$ solves all considered
problems. Therefore, this performance profile can be considered as
a measure of efficiency among all considered algorithms. In Figures \ref{fig:tuning_eta} 
and \ref{fig:fan_vs_iter}, the number $\tau$ is represented in the $x$-axis, while 
$P(r_{p,s}\leq\tau:1\leq s\leq n_{s})$ is shown in the $y$-axis.

First, let us tune the parameter $\eta$ to get the best performance of 
LMLS and LMTR. To do so, we consider several versions of these algorithms
corresponding to several levels of the parameter $\eta$ ($\eta=0.6,~ 0.8,~1.0,~1.2,~1.4$) and compare the results in 
Figure \ref{fig:tuning_eta}. From this figure,
it is clear that $\eta=1.2$ attains the best results for both LMLS and LMTR. Therefore,
we use $\eta=1.2$ for finding a zero of the mapping $h$ defined in (\ref{eq:steadyStateEquation}); 
however, to solve a different mappings, one may tune this parameter carefully 
before any practical usage.

\begin{figure}[H]
\,\centering%
\begin{minipage}[b][1\totalheight][t]{0.33\columnwidth}%
\subfigure[LMLS, the number of iterations $N_i$]
{\label{fig:AnotherFig-3}\foreignlanguage{english}{\includegraphics[width=7.5cm]{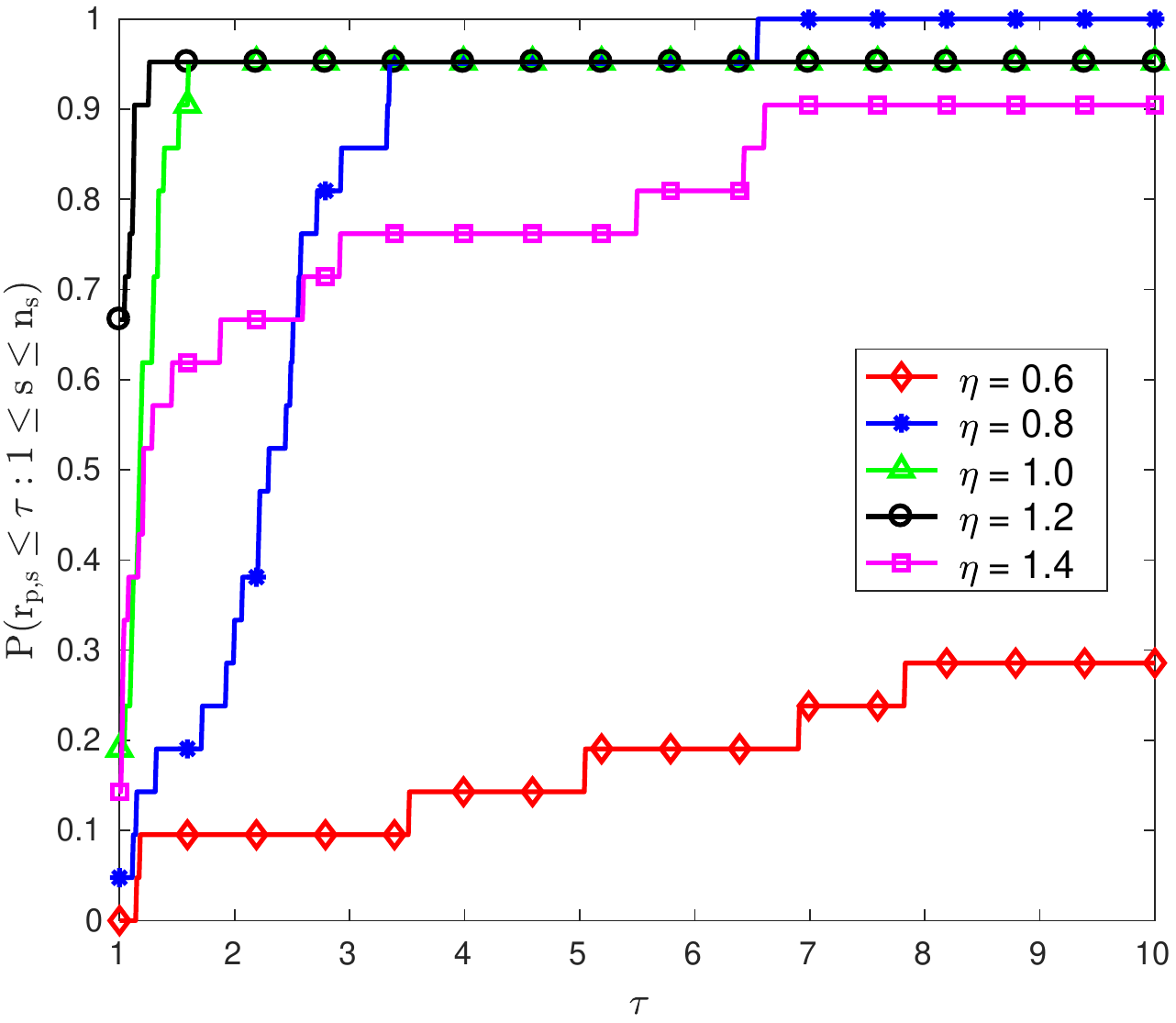}}}%
\end{minipage}\hfill{}%
\begin{minipage}[b][1\totalheight][t]{0.33\columnwidth}%
\subfigure[LMTR, the number of function evaluations $N_i$]
{\foreignlanguage{english}{\includegraphics[width=7.5cm]{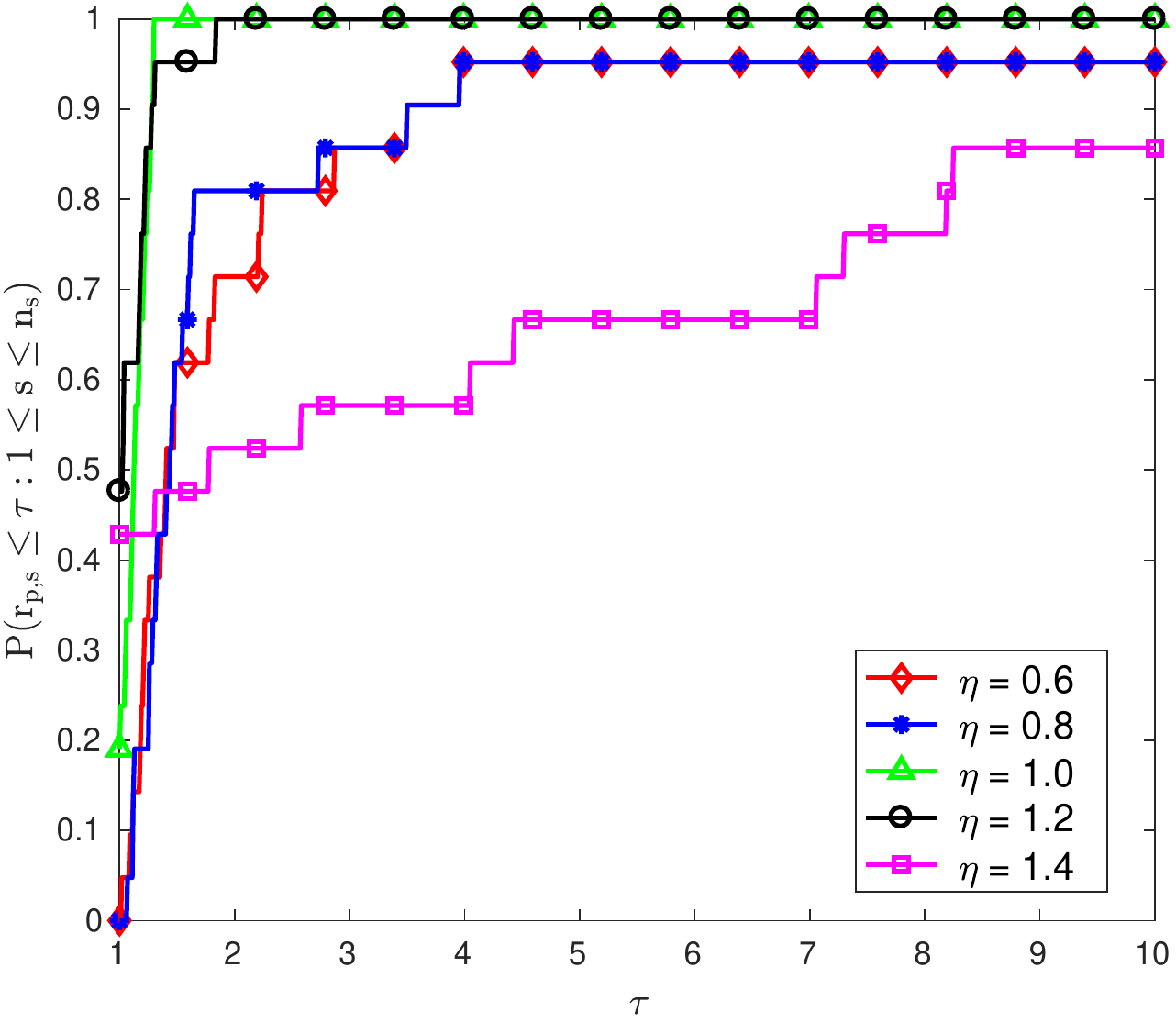}}\label{fig:AnotherFig2-3}}%
\end{minipage}\hspace{2.5cm}

\centering%

\begin{minipage}[b][1\totalheight][t]{0.33\columnwidth}%
\subfigure[LMLS, the number of function evaluations $N_f$]
{\label{fig:AnotherFig-3-1}\foreignlanguage{english}{\includegraphics[width=7.5cm]{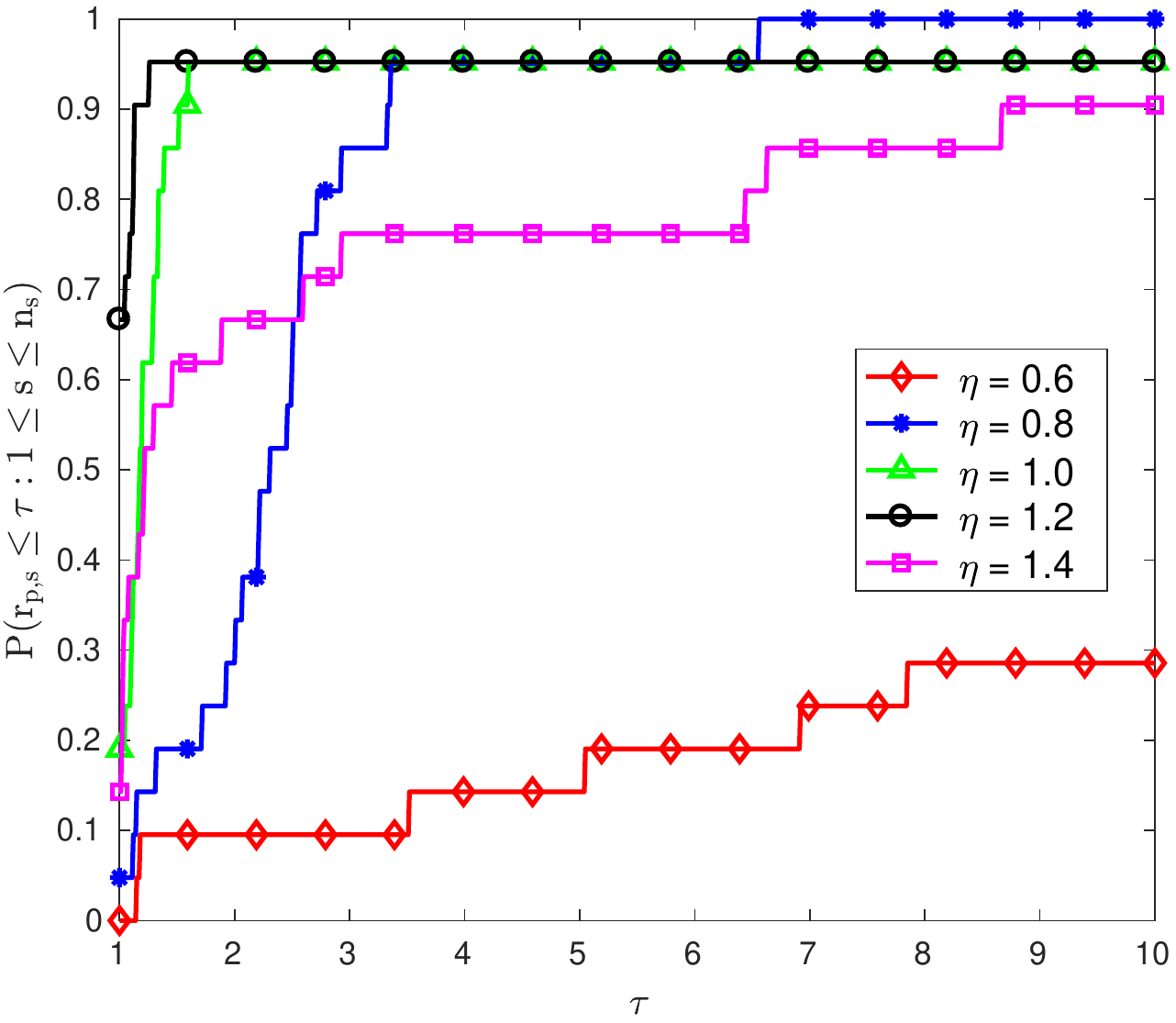}}}%
\end{minipage}\hfill{}%
\begin{minipage}[b][1\totalheight][t]{0.33\columnwidth}%
\subfigure[LMTR, the number of function evaluations $N_f$]{\foreignlanguage{english}
{\includegraphics[width=7.5cm]{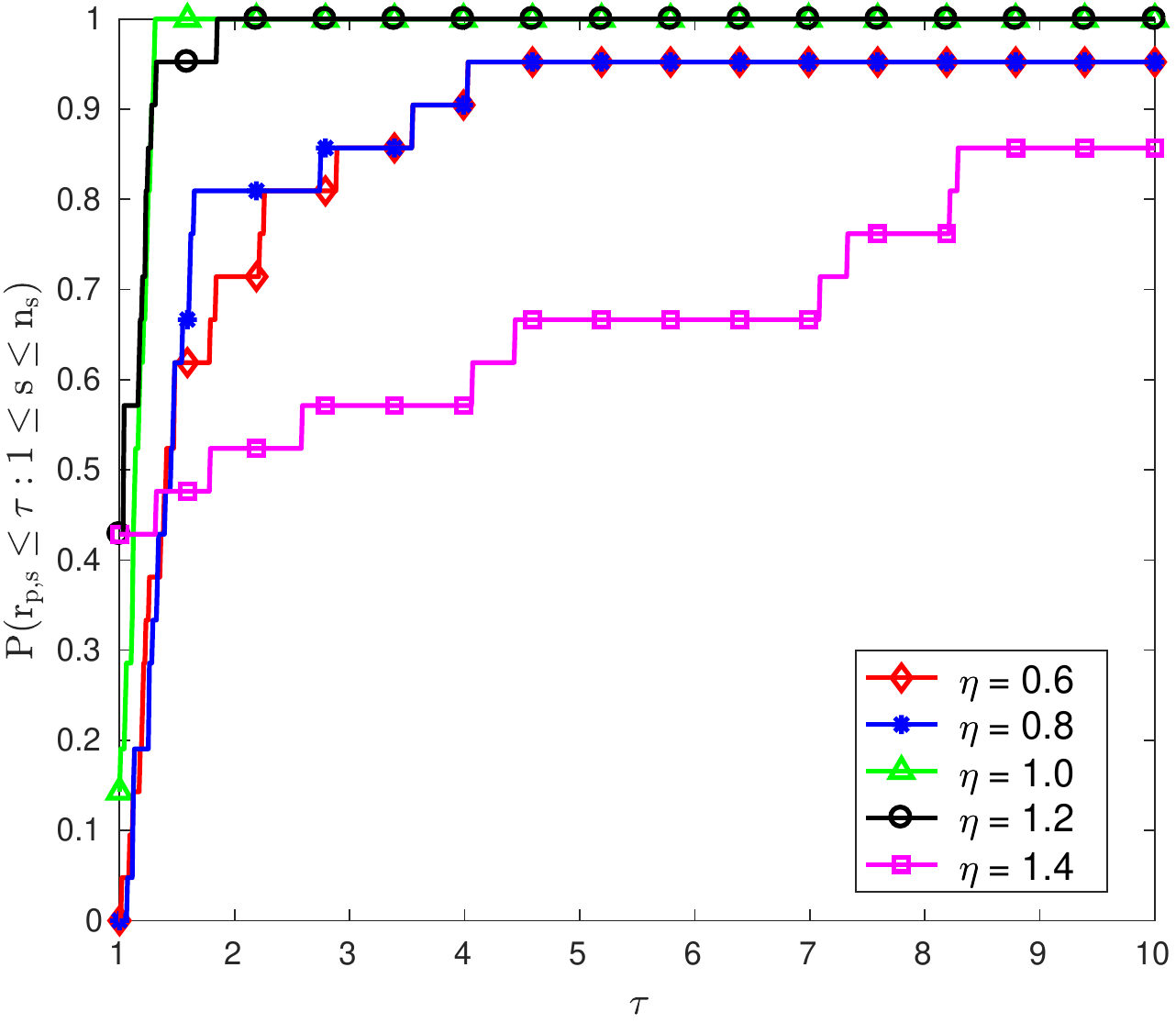}}\label{fig:AnotherFig2-3-1}}%
\end{minipage}\hspace{2.5cm}

\centering%

\begin{minipage}[b][1\totalheight][t]{0.33\columnwidth}%
\subfigure[LMLS, the running time $T$]{\label{fig:AnotherFig-3-1}\foreignlanguage{english}
{\includegraphics[width=7.5cm]{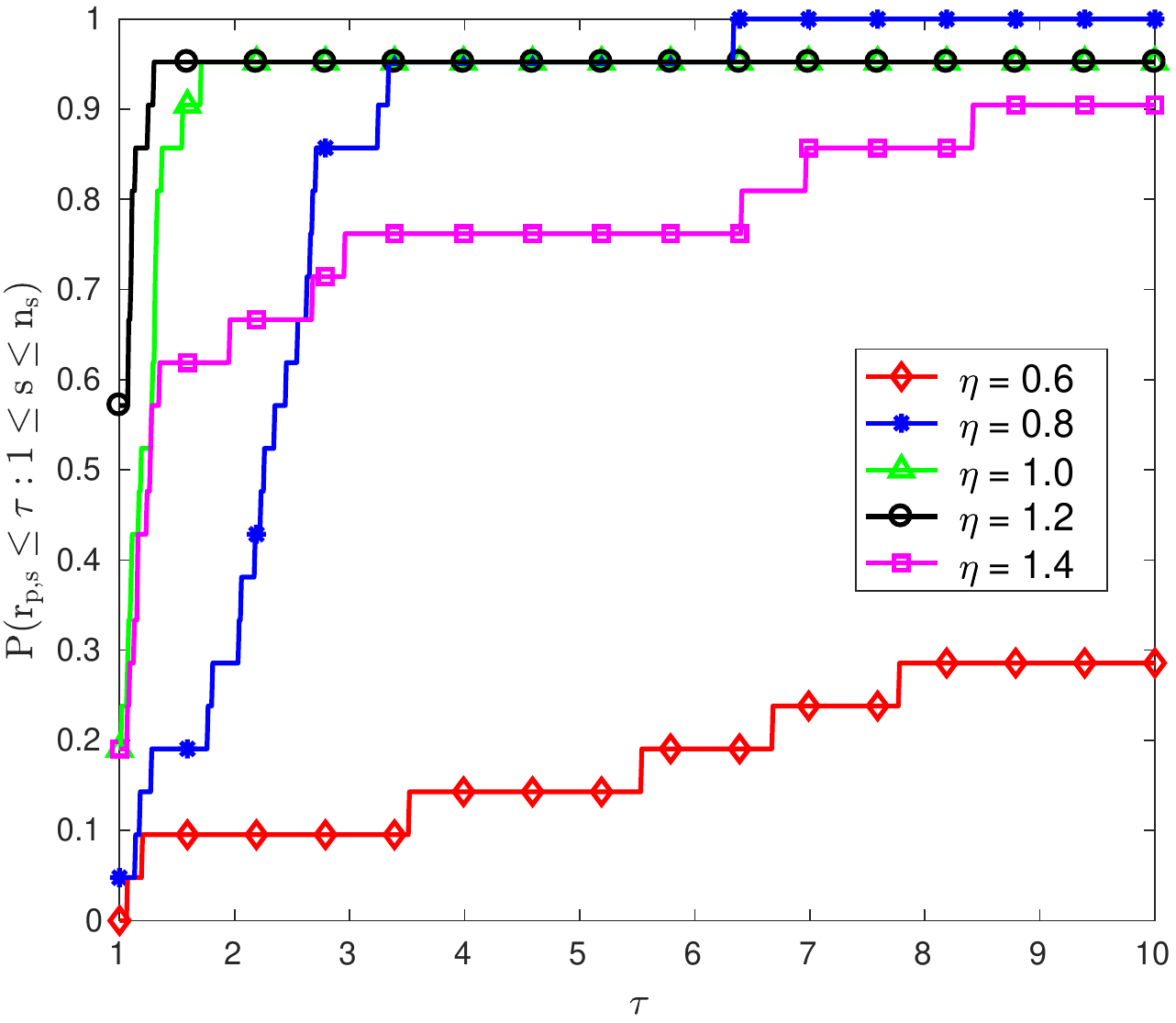}}}%
\end{minipage}\hfill{}%
\begin{minipage}[b][1\totalheight][t]{0.33\columnwidth}%
\subfigure[LMTR, the running time $T$]{\foreignlanguage{english}
{\includegraphics[width=7.5cm]{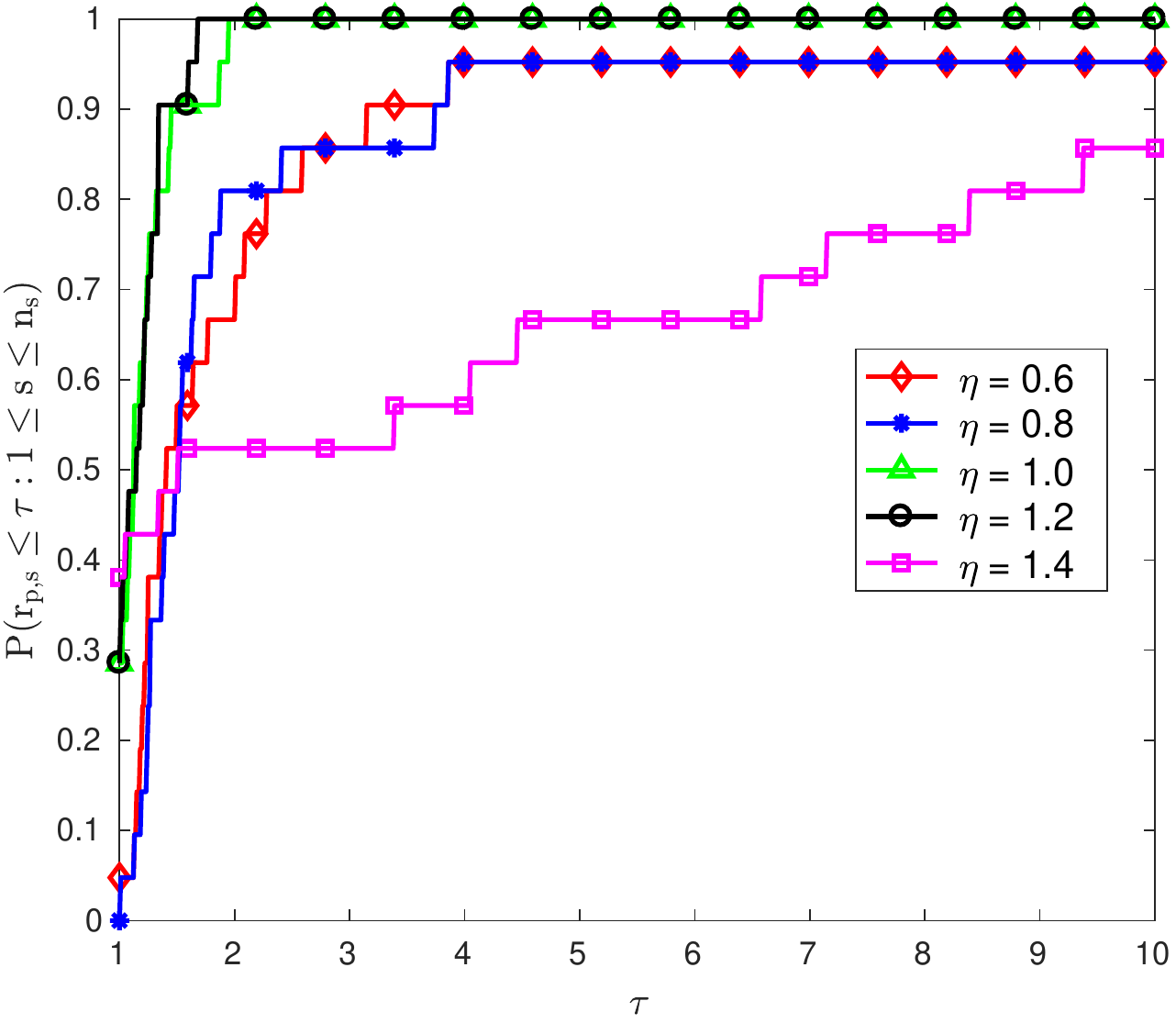}}\label{fig:AnotherFig2-3-1}}%
\end{minipage}\hspace{2.5cm}

\selectlanguage{english}%
\caption{Performance profile for the number of iterations ($N_i$), the number of function evaluations ($N_{f}$),
and the running time ($T$) of LMLS and LMTR to tune the parameter $\eta$, with
$\eta\in\{0.6, 0.8, 1.0, 1.2, 1.4\}$. The best performance is attained by $\eta=1.2$ for both methods.\label{fig:tuning_eta}}
\selectlanguage{american}%
\end{figure}

Next, we report the results of a comparison among LM-YF, LM-FY, LevMar, LMLS, and LMTR
for finding a zero of $h$ (\ref{eq:steadyStateEquation})
with respect to the total number of iterations ($N_i$),
the total number of function evaluations ($N_f$), the mixed measure $N_f+3N_i$,
and the running time ($T$) in Figure~\ref{fig:per_pro_global}.
From this figure, it can be seen that LMLS and LMTR outperform the 
others substantially with respect to all considered measures. Moreover, LMTR
solves the problems even faster than LMLS; however, the slope of curve of LMLS
indicates that its performance is much better than LM-YF, LM-FY, and LevMar, and
its performance is close to the performance of LMTR. Surprisingly, both LMLS and LMTR 
are convergent to a zero of the mapping $h$ (\ref{eq:steadyStateEquation}) not to
a stationary point of the merit function $\psi$ given by \eqref{eq:meritfunc}.
This clearly show the potential of LMLS and LMTR for finding the moiety conserved
steady state of biochemical reaction networks.

\begin{figure}[H]
\,\centering%
\begin{minipage}[b][1\totalheight][t]{0.33\columnwidth}%
\subfigure[the number of iterations $N_i$]{\label{fig:AnotherFig-3}
\foreignlanguage{english}{\includegraphics[width=7.5cm]{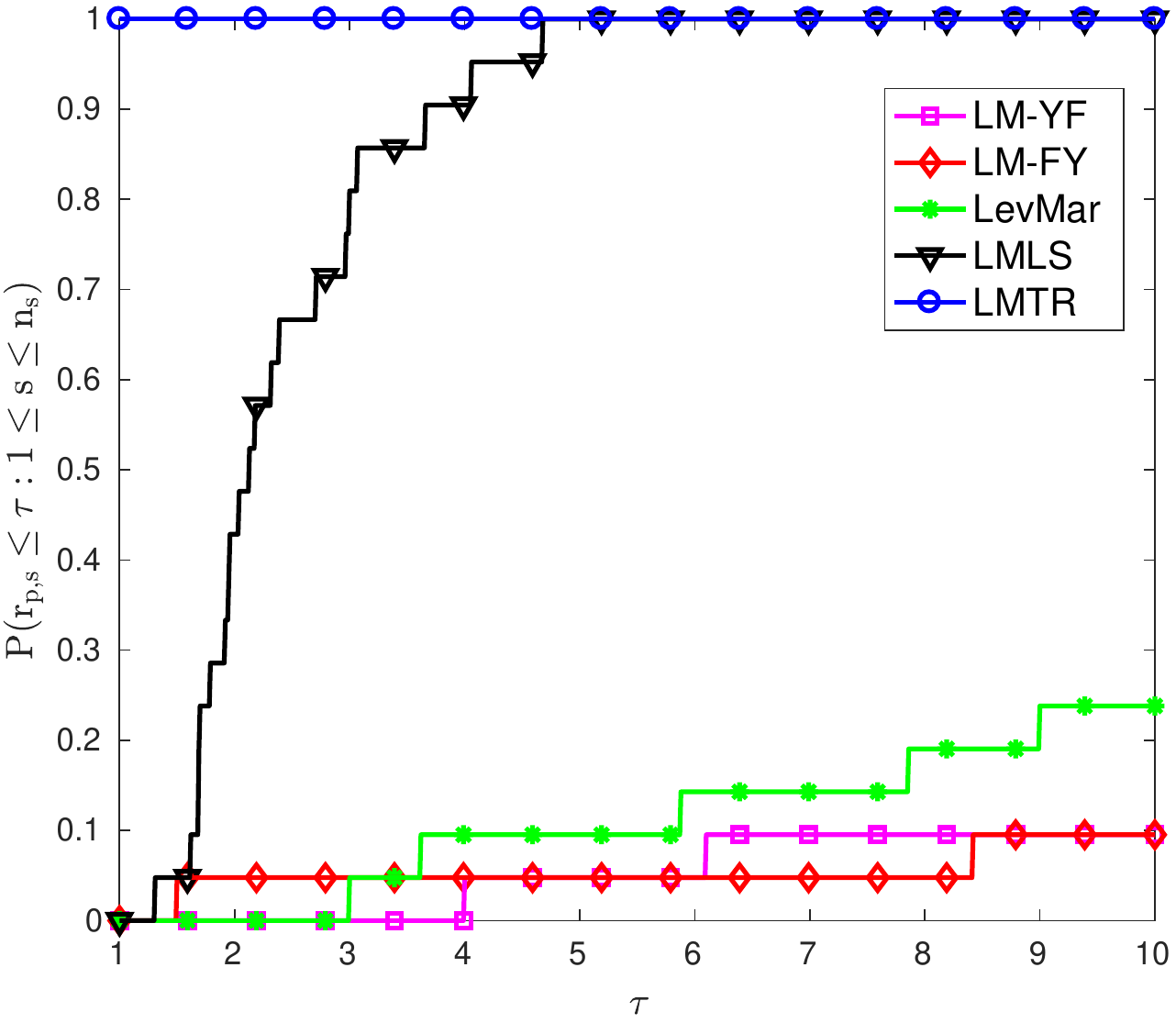}}}%
\end{minipage}\hfill{}%
\begin{minipage}[b][1\totalheight][t]{0.33\columnwidth}%
\subfigure[the number of function evaluations $N_f$]
{\foreignlanguage{english}{\includegraphics[width=7.5cm]{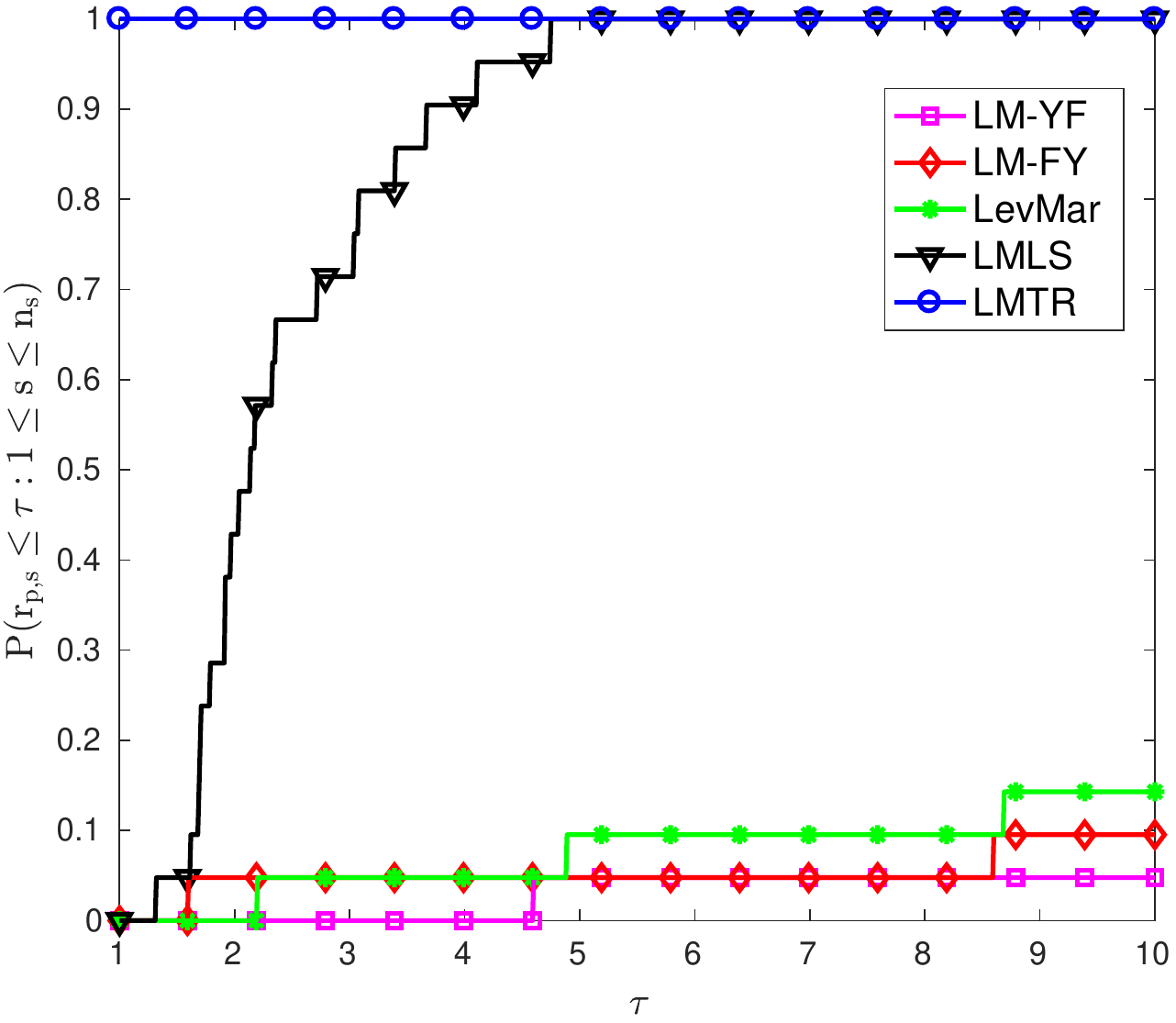}}\label{fig:AnotherFig2-3}}%
\end{minipage}\hspace{2.5cm}

\centering%
\begin{minipage}[b][1\totalheight][t]{0.33\columnwidth}%
\subfigure[the mixed measure $3N_i+N_f$]
{\label{fig:AnotherFig-3-1}\foreignlanguage{english}{\includegraphics[width=7.5cm]{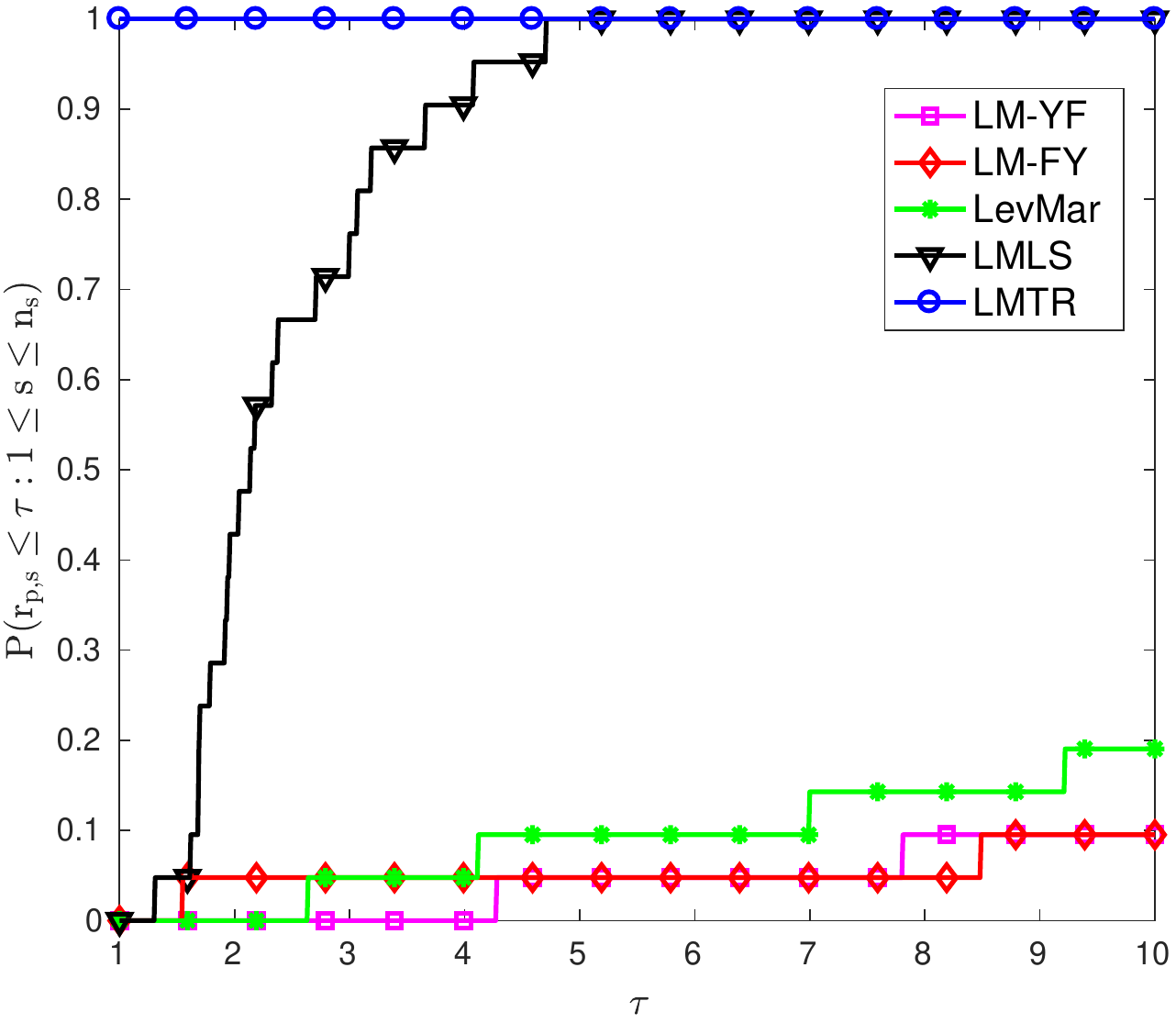}}}%
\end{minipage}\hfill{}%
\begin{minipage}[b][1\totalheight][t]{0.33\columnwidth}%
\subfigure[the running time $T$]
{\foreignlanguage{english}{\includegraphics[width=7.5cm]{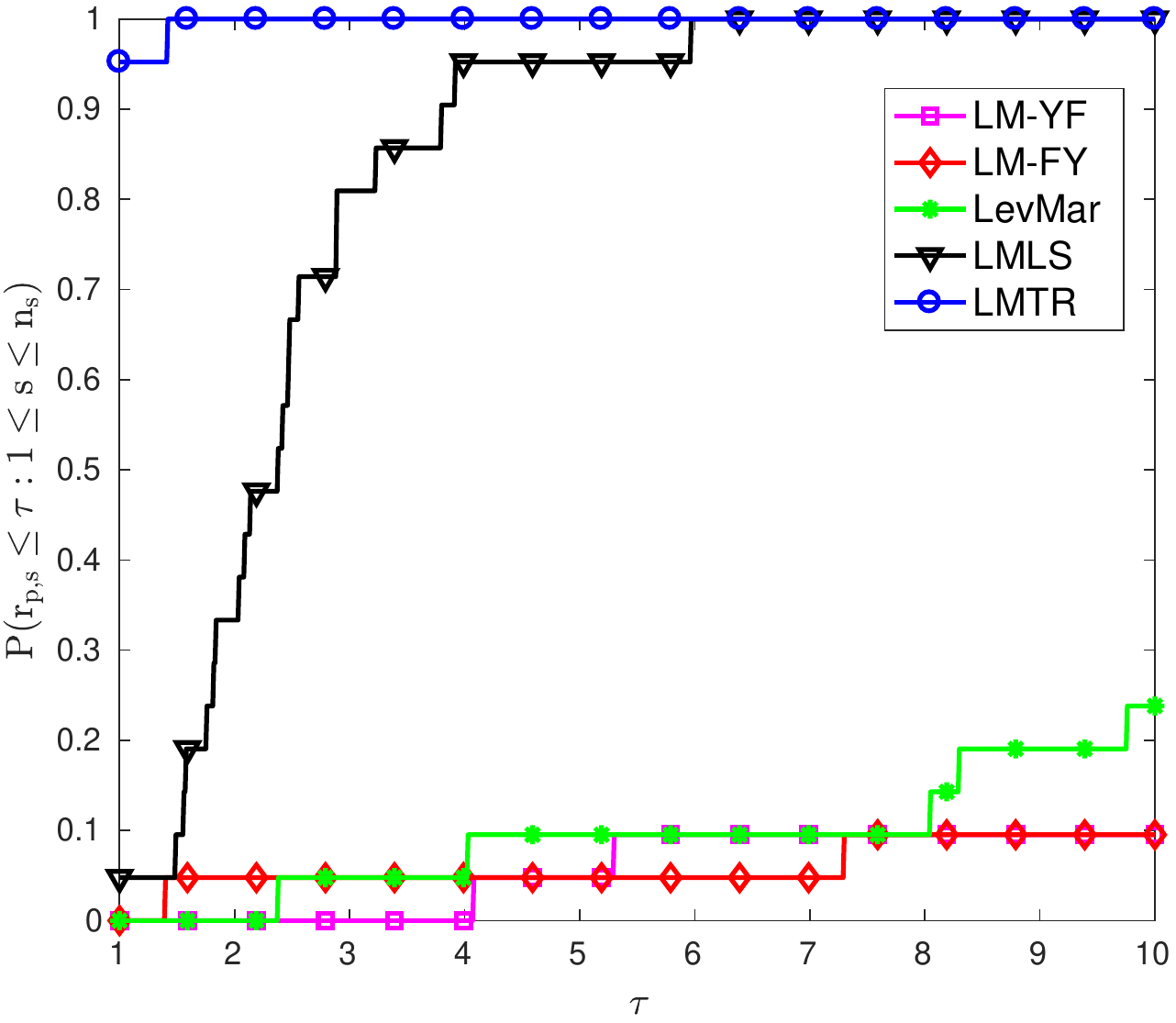}}\label{fig:AnotherFig2-3-1}}%
\end{minipage}\hspace{2.5cm}

\selectlanguage{english}%
\caption{Performance profiles for the number of iterations and the running
time of LM-YF, LM-FY, LevMar, LMLS, and LMTR on a set of 21 biological
models for the mapping (\ref{eq:steadyStateEquation}), where LMLS and LMTR
outperform the others substantially.\label{fig:per_pro_global}}
\selectlanguage{american}%
\end{figure}

Finally, we conclude this section by displaying the evolution of the merit function
values during run of the considered algorithms. To this end, we illustrate 
the function values $\psi$ versus iterations in Figure~\ref{fig:fan_vs_iter} for the 
mapping~\eqref{eq:steadyStateEquation} with the biological models iBsu1103 and iSB619.
Here, we limit the maximum number of iterations to 1,000. From Figure~\ref{fig:fan_vs_iter},
it can be seen that LMLS and LMTR perform much better than the others; however, the 
best performance is attained by LMTR.

\begin{figure}[H]
\centering%
\subfigure[iBsu1103]{\label{fig:AnotherFig}{\includegraphics[width=.47\textwidth]{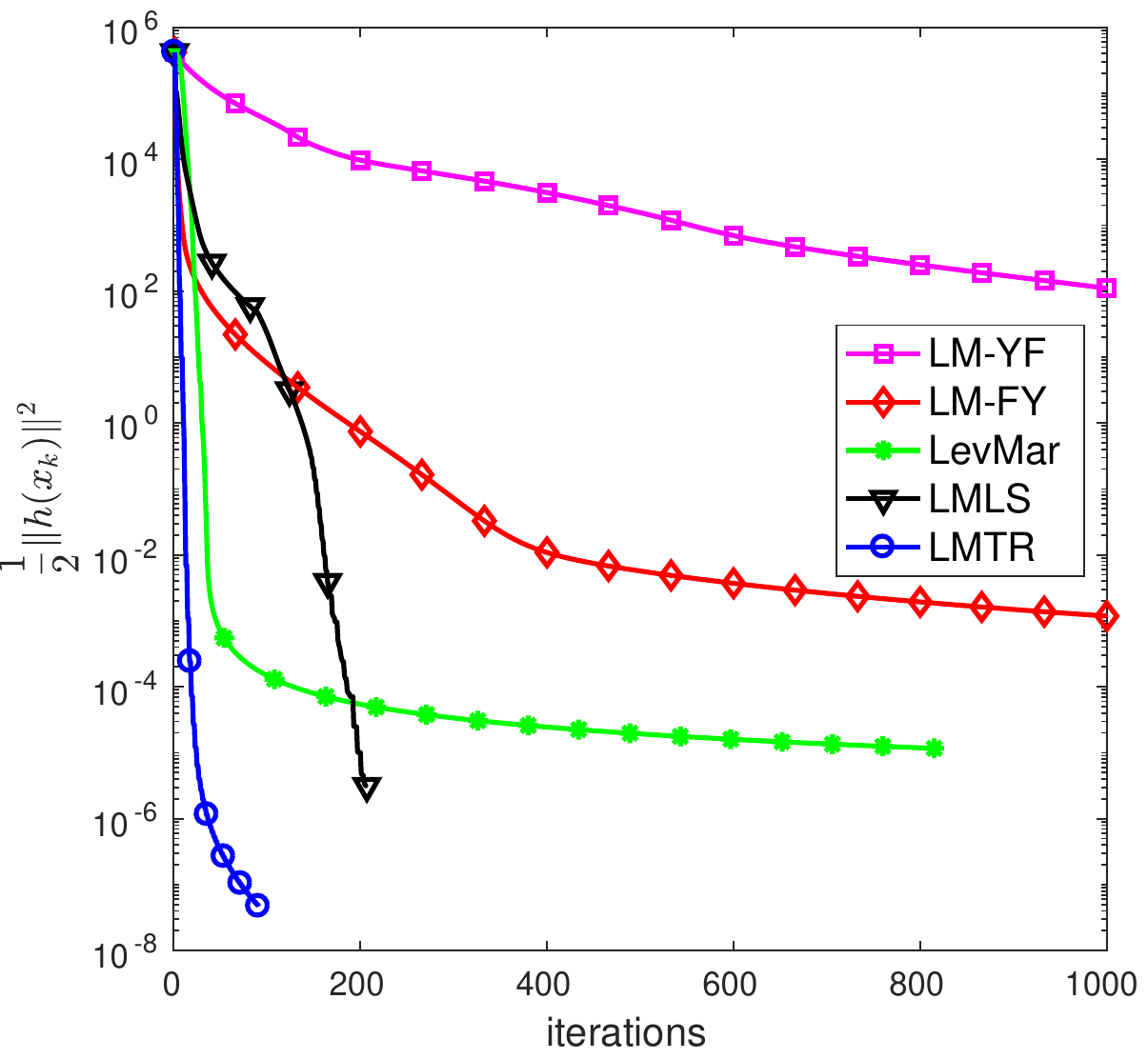}}}\qquad
\subfigure[iSB619]{{\includegraphics[width=.47\textwidth]{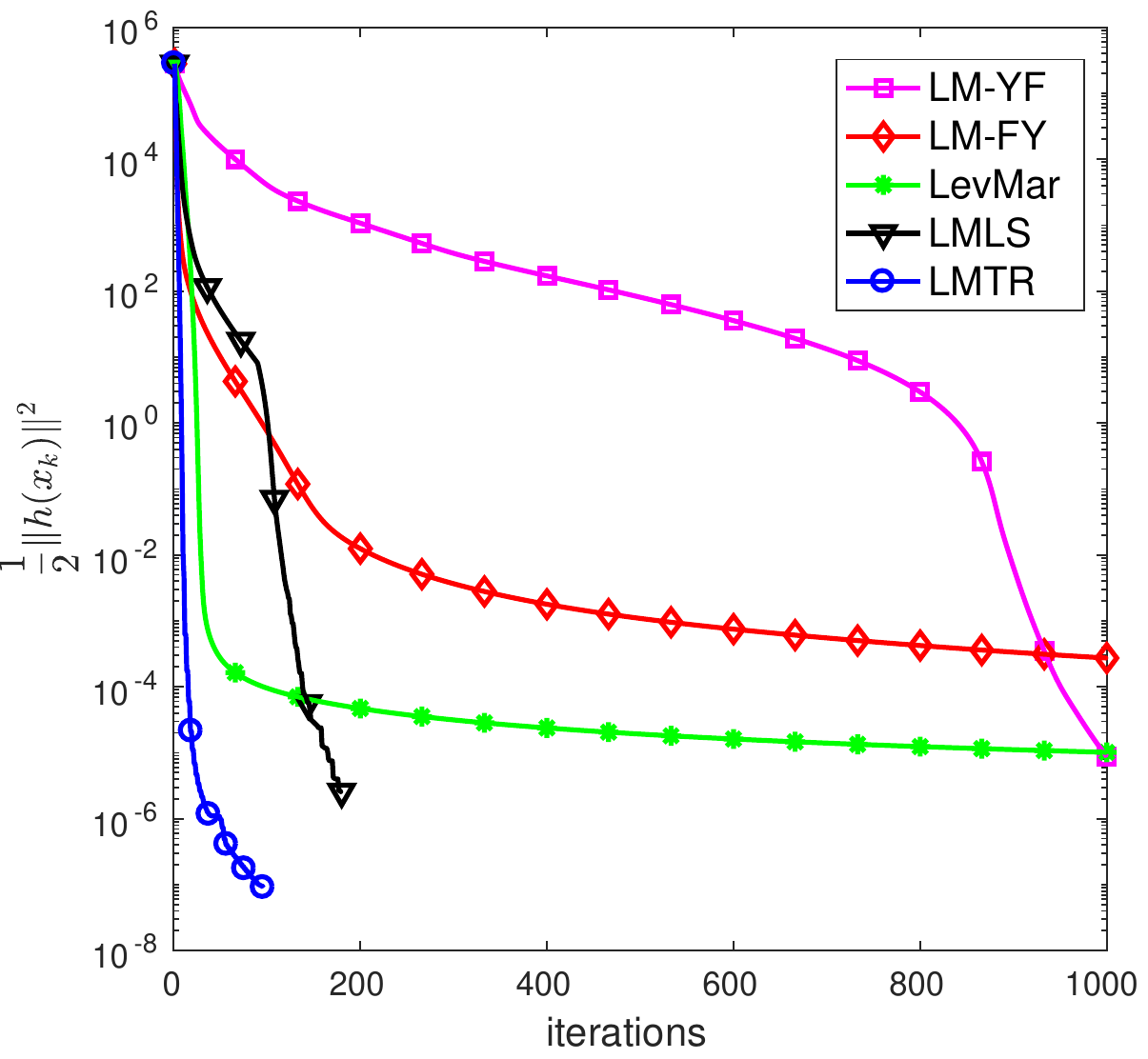}}\label{fig:AnotherFig2}}

\vspace{-3mm}
\caption{Value of the merit function with respect to the number of iterations for LM-YF, LM-FY, 
LevMar, LMLS and LMTR, when applied to the mapping~\eqref{eq:steadyStateEquation} defined by the biological
models iBsu1103 and iSB619. LMLS and LMTR require much less iterations than the others to achieve the accuracy given in \eqref{eq:stopcrit}. \label{fig:fan_vs_iter}}
\end{figure}

\section{Conclusion and further research }
We have employed two globalisation techniques for Levenberg-Marquardt
methods for finding a zero of H\"{o}lder metrically subregular mappings. First, we 
combined the Levenberg-Marquardt direction with a nonmonotone Armijo-type 
line search. Then, we modified the Levenberg-Marquardt parameter and 
combined the corresponding direction with a nonmonotone trust-region technique. Next, 
we studied the global convergence and the worst-case
global and evaluation complexities or both methods, which are of the order 
$\mathcal{O}(\varepsilon^{-2})$. The worst-case behavior
of the proposed methods, up to a factor, are equivalent to that
of the steepest descent method for unconstrained optimisation, cf.
\cite{CarGT,NesB}, which is not the best-known global complexity for
nonconvex problems, cf. \cite{cartis2011,nesterov2006}; however, practical 
usage of these methods show much better performance than the worse-case complexity, giving scope for future establishement of tigher complexity bounds. Finally, we have studied some special mappings that 
satisfy certain conditions for a stationary point to corresponds to a zero of a mapping, when obtained with the proposed methods.

We also investigate finding zeros of H\"{o}lder metrically subregular mappings that 
appear in modelling of biochemical reaction networks. Our numerical
experiments establish the suitability of the proposed methods for a range of medium- and
large-scale biochemical network problems. Nevertheless, biochemical reaction networks on the order of tens of millions of dimensions already exist~\cite{Magnsdttir2016}, and the projection is for even larger models in the future. Therefore, considerable scope exists for development of accelerated solution methods.

\section*{Acknowledgements}
This work was supported by the U.S. Department of Energy, Offices of Advanced Scientific Computing Research and the Biological and Environmental Research as part of the Scientific Discovery Through Advanced Computing program, grant \#DE-SC0010429, the OPEN program from the Fonds National de la Recherche Luxembourg (FNR/O16/11402054), and the University of Luxembourg (IRP/OptBioSys).

\bibliographystyle{acm}

\end{document}